\newcommand{\plectic}[0]{\text{\textmarried}}
\newcommand{\bb}{\mathbb}
\newcommand{\scr}{\mathscr}
\newcommand{\mrm}{\mathrm}
\newcommand*{\bfcdot}{\scalebox{0.6}{$\bullet$}}
\newcommand{\et}{{\acute{\mathrm{e}}\mathrm{t}}}
\newcommand{\Z}{\ensuremath{\mathbb{Z}}}
\newcommand{\OO}{\ensuremath{\mathcal{O}}}
\newcommand{\Q}{\ensuremath{\mathbb{Q}}}
\newcommand{\C}{\ensuremath{\mathbb{C}}}
\newcommand{\p}{\ensuremath{\mathfrak{p}}}
\newcommand{\q}{\ensuremath{\mathfrak{q}}}
\newcommand{\A}{\ensuremath{\mathbb{A}}}
\newcommand{\too}{\longrightarrow}								
\newcommand{\mapstoo}{\longmapsto}
\newcommand{\into}{\hookrightarrow}
\newcommand{\cf}{\mathbbm{1}}
\newcommand{\PP}{\ensuremath{\mathbb{P}^1}}
\newcommand{\G}{\ensuremath{\mathcal{G}}}
\newcommand{\n}{\ensuremath{\mathfrak{n}}}
\newcommand{\m}{\ensuremath{\mathfrak{m}}}
\newcommand{\St}{\textnormal{St}}
\newcommand{\HH}{\textnormal{H}}
\newcommand{\dd}{\textnormal{d}}
\newcommand{\LI}{\mathcal{L}}
\DeclareMathOperator{\Hom}{Hom}
\DeclareMathOperator{\Gal}{Gal}
\DeclareMathOperator{\id}{id}
\DeclareMathOperator{\GL}{GL}
\DeclareMathOperator{\PGL}{PGL}
\DeclareMathOperator{\rec}{rec}
\DeclareMathOperator{\disc}{disc}
\DeclareMathOperator{\Div}{Div}
\DeclareMathOperator{\sm}{sm}
\DeclareMathOperator{\Ev}{Ev}
\DeclareMathOperator{\rk}{rk}
\renewcommand{\det}{\operatorname{det}}
\DeclareMathOperator{\ord}{ord}
\DeclareMathOperator{\pr}{pr}
\DeclareMathOperator{\cont}{ct}
\def\Xint#1{\mathchoice%
{\XXint\displaystyle\textstyle{#1}}%
{\XXint\textstyle\scriptstyle{#1}}%
{\XXint\scriptstyle\scriptscriptstyle{#1}}%
{\XXint\scriptscriptstyle\scriptscriptstyle{#1}}%
\!\int}%
\def\XXint#1#2#3{{\setbox0=\hbox{$#1{#2#3}{\int}$}%
\vcenter{\hbox{$#2#3$}}\kern-.5\wd0}}%
\theoremstyle{plain}
\newtheorem{theorem}{Theorem}[section]
\newtheorem{lemma}[theorem]{Lemma}
\newtheorem{proposition}[theorem]{Proposition}
\newtheorem{conjecture}[theorem]{Conjecture}
\newtheorem{thmx}{Theorem}
\theoremstyle{definition}
\newtheorem{remark}[theorem]{Remark}
\newtheorem{definition}[theorem]{Definition}
\def\Xint#1{\mathchoice
{\XXint\displaystyle\textstyle{#1}}%
{\XXint\textstyle\scriptstyle{#1}}%
{\XXint\scriptstyle\scriptscriptstyle{#1}}%
{\XXint\scriptscriptstyle\scriptscriptstyle{#1}}%
\!\int}
\def\XXint#1#2#3{{\setbox0=\hbox{$#1{#2#3}{\int}$ }
\vcenter{\hbox{$#2#3$ }}\kern-.585\wd0}}
\def\mint{\Xint\times}
\begin{document}

\title{Plectic Stark--Heegner points}

\author{Michele Fornea}
\email{mfornea@math.columbia.edu}
\address{Columbia University, New York, USA.}

\author{Lennart Gehrmann}
\email{lennart.gehrmann@uni-due.de}
\address{Universität Duisburg-Essen, Essen, Germany.}

\classification{11F41, 11F67, 11G05, 11G40.}

\begin{abstract}
	We propose a conjectural construction of global points on modular elliptic curves over arbitrary number fields, generalizing both the $p$-adic construction of Heegner points via $\check{\text{C}}$erednik--Drinfeld uniformization and the definition of classical Stark--Heegner points.  In alignment with Nekov\'a$\check{\text{r}}$ and Scholl's plectic conjectures, we expect the non-triviality of these \emph{plectic Stark--Heegner points} to control the Mordell--Weil group of higher rank elliptic curves. We provide some indirect evidence for our conjectures by showing that higher order derivatives of anticyclotomic $p$-adic $L$-functions compute plectic invariants.
\end{abstract}

\maketitle

\tableofcontents

\section{Introduction}
When $F$ is a totally real number field and $A_{/F}$ is a modular elliptic curve of conductor $\mathfrak{f}_A$, there is a Hilbert cuspform $f_A$ of parallel weight $2$ whose $L$-function coincide with that of the elliptic curve. This equality provides analytic continuation and functional equation for the Hasse--Weil $L$-function of $A_{/F}$. Experience has shown that it is easier to study the Taylor series of $L(A/F,s)$ at the center $s=1$ by additionally considering a quadratic CM extension $E/F$. If the conductor $\mathfrak{f}_A$ is unramified in $E$, then the sign of the functional equation of $L(A/E,s)$ is computed by 
\[
\varepsilon(A/E)=(-1)^{\lvert\Sigma(A/E)\rvert},
\]
where $\Sigma(A/E)$ is a certain set of places of $F$ containing all the Archimedean places of $F$ (given that $E/F$ is a CM extension).
 There are two key formulas describing the first non-trivially-zero term of the Taylor series of $L(A/E,s)$:

\begin{itemize}
\item[\bfcdot] when $\varepsilon(A/E)=+1$, i.e. when the set $\Sigma(A/E)$ has even cardinality, Waldspurger formula expresses the special $L$-value $L(A/E,1)$ in terms of CM points on the \emph{zero dimensional} Shimura variety associated to the totally definite quaternion algebra with ramification set $\Sigma(A/E)$ (\cite{Waldspurger}, \cite{YZZ});

\item[\bfcdot] when $\varepsilon(A/E)=-1$, i.e. when $\Sigma(A/E)$ has odd cardinality, the Gross--Zagier--Zhang formula expresses  the value of the first derivative $L'(A/E,1)$ in terms of CM points points on a \emph{one dimensional} Shimura variety associated to the quaternion algebra of ramification set $\Sigma(A/E)\setminus \{\tau\}$ for some Archimedean place $\tau$ (\cite{GZformula}, \cite{YZZ}).
\end{itemize}
The optimist might guess a pattern emerging from this data and  hope that analogous relations hold in general: is it really possible for CM points on $r$-dimensional quaternionic Hilbert varieties to control the value of $r$-th derivatives  of $L$-functions of elliptic curves? 

\noindent After some reflection, one notes at least two apparent obstacles to this expectation: 
\begin{itemize}
	\item [($i$)] Shimura varieties are defined over reflex fields which might be disjoint from $F$ in general;
	\item [($ii$)] the Chow group of zero cycles $\mrm{CH}_0(X)$ of an $r$-dimensional quaternionic Hilbert variety $X$ should not carry information about modular elliptic curves as soon as $r\ge2$. Indeed, the Beilinson--Bloch--Kato conjecture  (\cite{YLiu}, Section 1.2.1) predicts that the rank of $\mrm{CH}_0(X)$ is essentially controlled by $\mrm{H}_\et^{2r-1}(X_{\overline{\bb{Q}}},\bb{Q}_p(r))$, while the $f_A$-isotypic component of the cohomology of $X$ is concentrated in degree $r$.  
\end{itemize}   
These obstacles may appear to be insurmountable and thus good reasons to reject the optimistic expectation. However, Nekov\'a$\check{\text{r}}$ and Scholl conjecture the existence of a ``plectic'' enhancement of the \'etale cohomology of quaternionic Hilbert varieties that should resolve the issues, and provide generalizations of Siegel units and Heegner points  (\cite{PlecticNS}, \cite{NekRubinfest}). 

\noindent In this article we explore the consequences of the optimistic expectation in a pragmatic way: inspired by Nekov\'a$\check{\text{r}}$ and Scholl's insights we directly propose a conjectural construction of global points on modular elliptic curves via $p$-adic integration.   Ours is a common generalization of both the construction of Heegner points via $\check{\text{C}}$erednik--Drinfeld uniformization (\cite{CDuniformization}) and the cohomological definition of Stark--Heegner points (\cite{IntegrationDarmon}, \cite{Greenberg}, \cite{ArbitraryDarmon}, \cite{AutomorphicDarmon}). In particular, we consider elliptic curves with mulitplicative reduction at a collection of $p$-adic primes.
We expect the non-triviality of these \emph{plectic Stark--Heegner points} to control the Mordell--Weil group of higher rank elliptic curves, and we provide some indirect evidence by showing that higher order derivatives of anticyclotomic $p$-adic $L$-functions compute plectic invariants.
Furthermore, we note that our approach works uniformly over \emph{arbitrary base number fields} $F$  and without a priori restrictions on the type of quadratic extension $E/F$, suggesting that plectic ideas may have wider scope than previously expected.

We refer to \cite{PlecticInvariants} for some \emph{direct numerical evidence}, and a somewhat simplified exposition of the construction of plectic invariants in the case of $E/F$ not a CM extension and both number fields having narrow class number one. Finally, we invite the adventurous reader to have a look at \cite{PlecticJacobians}, where we develop the speculative  framework of \emph{plectic Jacobians}, which recasts the construction of plectic Stark--Heegner points for CM extensions in more geometric terms.
The rest of the introduction is devoted to describing our conjectural construction of global points, and to exploring the relation between plectic invariants and higher derivatives of anticyclotomic $p$-adic $L$-functions.

\subsection{Elliptic curves of higher rank}
Let $F$ be a number field, $A_{/F}$ a modular elliptic curve of conductor $\mathfrak{f}_A$, and $E/F$ a quadratic extension where $\mathfrak{f}_A$ is unramified. A long-standing open problem in the arithmetic of elliptic curves consists in discovering a modular construction of an element $\mrm{P}\in\wedge^rA(E)$ such that $\mrm{P}$ is non-torsion exactly when the algebraic rank $r_\mrm{alg}(A/E)$ equals $r$. During the last decade, Nekov\'a$\check{\text{r}}$ and Scholl have been promoting the idea that a plectic enhancement of the Abel-Jacobi map of quaternionic Hilbert varieties would produce the image of $\mrm{P}\in\wedge^rA(E)$  under the Kummer map
	\[
	\wedge^r\cal{K}\colon \wedge^rA(E)\longrightarrow \otimes^r_{\bb{Q}_p}\mrm{H}^1_f(E,V_p(A)).
	\]
While their plectic conjectures seem to be out of reach at present, the results of this paper may be interpreted as a direct $p$-adic construction of a localization of Nekov\'a$\check{\text{r}}$ and Scholl's cohomology class. To justify our claim, let us fix a rational prime $p$ unramified in $F$ and suppose there is a set  $S=\{\mathfrak{p}_1,\dots,\mathfrak{p}_r\}$ of $r$ distinct $p$-adic $\cal{O}_F$-prime ideals inert in $E$. The local Kummer map induces an isomorphism $\widehat{A}(E_\p)\otimes_{\bb{Z}_p}\bb{Q}_p\overset{\sim}{\too}\mrm{H}^1_f(E_\p,V_p(A))$,
where $\widehat{A}(E_\p)$ denotes the $p$-adic completion of the local points.  Then, by fixing embeddings
$\iota_\mathfrak{p}\colon E\hookrightarrow E_\mathfrak{p}$ for primes $\mathfrak{p}\in S$, we can consider the composition
\[\xymatrix{ \wedge^rA(E)\ar[rr]^-{\wedge^r\cal{K}}\ar@/^2pc/@{.>}[rrr]^-\det&& \otimes^r_{\bb{Q}_p}\mrm{H}^1_f(E,V_p(A))\ar[r]^-{\text{loc}_S} &  \otimes_{\p\in S}\big(\widehat{A}(E_\p)\otimes_{\bb{Z}_p}\bb{Q}_p\big)\\
	}\]
 given by
\[
\det\big(P_1\wedge\dots\wedge P_r\big)=\det \begin{pmatrix}
		\iota_{\mathfrak{p}_1}(P_1)&\dots& \iota_{\mathfrak{p}_r}(P_1)\\
	&\dots&\\
	\iota_{\mathfrak{p}_{1}}(P_r)&\dots& \iota_{\mathfrak{p}_r}(P_r)
	\end{pmatrix}.
\]
The main contribution of this paper is the conjectural construction of elements in the image  $\det(\wedge^rA(E))$ under further assumptions on the conductor of the elliptic curve
presented below.

\begin{remark}\label{extraVanishing}
Unfortunately, the localization process loses information when global classes are annihilated. For instance, if the elliptic curve $A_{/F}$ can be defined over $\bb{Q}$ and $r\ge 2$, the map $\det=\text{loc}_S\circ(\wedge^r\cal{K})$
	\[
	\wedge^rA(\bb{Q})\too \wedge^r\big(\widehat{A}(\bb{Q}_p)\otimes_{\bb{Z}_p}\bb{Q}_p\big)\hookrightarrow\otimes_{\p\in S}\big(\widehat{A}(\bb{Q}_p)\otimes_{\bb{Z}_p}\bb{Q}_p\big)
	\]
	equals zero because $\widehat{A}(\bb{Q}_p)$ has $\bb{Z}_p$-rank one. 
\end{remark}

\noindent  From now on,  suppose that $p_{S}=\prod_{\mathfrak{p}\in S}\mathfrak{p}$ \emph{exactly} divides the conductor $\mathfrak{f}_A$. We can then write
\[
\mathfrak{f}_A=p_{S}\cdot\mathfrak{n}^{\mbox{\tiny $+$}}\cdot\mathfrak{n}^{\mbox{\tiny $-$}},
\]
where $\mathfrak{n}^{\mbox{\tiny $+$}}$ is the product of \emph{all} prime divisors of $\mathfrak{f}_A$ that split in $E$. Let $\infty_1,\dots,\infty_{t}$ denote the real places of $F$ ordered such that the first $n$ are precisely those that split in $E$. Assuming that the ideal $\mathfrak{n}^{\mbox{\tiny $-$}}$ is square-free and denoting its number of prime factors by $\omega(\mathfrak{n}^{\mbox{\tiny $-$}})$, the root number of $A_{/E}$ is conjecturally computed by
\[
\varepsilon(A/E)=(-1)^{r+\omega(\mathfrak{n}^{\mbox{\tiny $-$}})+(t-n)}.
\]
Therefore, the parity conjecture suggests us to impose the following congruence condition
\[
 \omega(\mathfrak{n}^{\mbox{\tiny $-$}})\equiv (t-n) \pmod{2}.
 \]
This last assumption ensures the existence of a quaternion algebra $B/F$ ramified precisely at $\{ \mathfrak{q}\mid \mathfrak{n}^{\mbox{\tiny $-$}}\}\cup\{\infty_{n+1},\dots,\infty_{t}\}$
and admitting an embedding $\psi\colon E\hookrightarrow B$. This data plays a key role in the construction of plectic Stark--Heegner points.
We denote by $T=E^\times/F^\times$ and $G=B^\times/F^\times$ the associated $F$-algebraic groups, and by $\psi\colon T\hookrightarrow G$ the induced embedding. Further, we let $\pi$ be the automorphic representation of $G$ associated to $A_{/F}$ by modularity and the Jacquet--Langlands correspondence.
	The choice of the quaternion algebra $B$ might appear unjustified. However, the relation between plectic points and anticyclotomic $p$-adic $L$-functions -- more precisely the local constraints on linear periods appearing in Waldspurger formula -- suggests that the choice of $B$ made above is the only interesting one for our purposes.

 While developing the necessary machinery, it is natural to consider other invariants associated to the elliptic curve $A_{/F}$ and the quadratic extension $E/F$. These are called \emph{plectic $p$-adic invariants} and have the advantage of being much easier to define while already encoding interesting information.

\subsection{Plectic p-adic invariants}
We set $q$ to be the number of archimedean places at which $B$ is split, i.e.~$q$ is the sum of $n$ and the number of complex places of $F$.
By our assumptions, any arithmetic subgroup of $T(F)$ has rank $q$. The construction of plectic $p$-adic invariants comprises three main steps: 
\begin{itemize}
	\item [$\bfcdot$] the definition of a measure-valued cohomology class $c_{A,\epsilon}^S$ associated to the triple $(A_{/F}, \epsilon, S)$ consisting of the modular elliptic curve $A_{/F}$, a character $\epsilon\colon\pi_0(G_\infty)\to\{\pm1\}$, and $S$;
	\item [$\bfcdot$] the definition of an integrand function arising from the embedding $\psi\colon T\hookrightarrow G$;
	\item [$\bfcdot$] the cap product with a homology class $\vartheta_\chi$ associated to the non-split torus $T$ and a ring class character $\chi$.
\end{itemize}

\subsubsection{Measure-valued cohomology class.}
Let $\p\in S$ and fix an isomorphism $G_\p\cong\PGL_2(F_\p)$. We consider the character $\chi^{\pm}_{\p}\colon G_\p\to \{\pm 1\}$ given by the formula
	\[
	\chi^{\pm}_{\p}(g)=(\pm1)^{\ord_\p(\det(g))}\qquad\forall\ g\in G_\p.
	\]
The $\pm$-Steinberg representation $\St_\p^\pm(\bb{Z})$ of  $G_\p$
 is the space of locally constant $\bb{Z}$-valued functions on $\PP(F_\p)$ modulo constant functions, with action given by 
 \[
 (g\cdot f)(\cdot)=\chi_\p^{\pm}(g)\cdot f(g^{-1}\cdot)\qquad\forall\ g\in G_\p.
 \]
\noindent
We write $S$ as a disjoint union $S=S^+\cup S^-$ where $S^\pm=\{\p\in S\mid \pi_{\p}=\St_\p^\pm(\C)\}$, and put
	\[
	\St_{S}(\bb{Z}):=\bigotimes_{\p\in S^+}\St_\p^+(\bb{Z}) \otimes_\bb{Z} \bigotimes_{\p\in S^-} \St_\p^-(\bb{Z}).
	\]
By a $\bb{Z}$-valued measure on $\bb{P}^1(F_S)$ we mean a finitely additive function defined on compact open subsets of $\prod_{\p\in S}\bb{P}^1(F_\p)$.
Then, the $\bb{Z}$-linear dual $\mrm{Hom}_\bb{Z}(\St_{S}(\bb{Z}),\bb{Z})$ of the $S$-Steinberg representation is naturally identified with a submodule of $\bb{Z}$-valued measures on $\bb{P}^1(F_S)$. 

\noindent To simplify the notation in the introduction, we will ignore all kinds of class number issues.
Theorem \ref{class} shows that there is an $S$-arithmetic subgroup $\Gamma^S\le G(F)$ such that, for every character $\epsilon\colon \pi_0(G_\infty)\to \left\{\pm 1\right\}$, the $[\pi,\epsilon]$-isotypic component of $\mrm{H}^{q}(\Gamma^S,\Hom_\Z(\St_{S}(\bb{Z}),\bb{Z}))$ is finitely generated of rank one.
Therefore, it is natural to define the measure-valued cohomology class $c_{A,\epsilon}^S$ associated to the triple $(A_{/F},\epsilon, S)$ as a generator of its free part, i.e.~
\[
c_{A,\epsilon}^S\in\mrm{H}^{q}\big(\Gamma^S,\Hom_\Z(\St_{S}(\bb{Z}),\bb{Z})\big)_{\pi}^{\epsilon},
\]
which is unique up to sign.

\subsubsection{The integrand function.}
Recall that any $\p\in S$ is inert in the quadratic extension $E/F$.
Hence, the completion of $E$ at $\p$ is the unramified quadratic extension of $F_\p$ and we can consider  
\[
\mathcal{H}_\p(E_\p)=\PP(E_\p)\setminus\PP(F_\p)
\]
onto which the group $G_\p$ acts via M\"obius transformations. The embedding $\psi\colon T\hookrightarrow G$ induces an action of $T_\p$
 on $\mathcal{H}_\p(E_\p)$ with two fixed points $\tau_{\psi,\p}$ and $\bar{\tau}_{\psi,\p}$, which are interchanged by  the Galois group $\Gal(E_\p/F_\p)=\langle\sigma_\p\rangle$. 
 We choose $\tau_{\psi,\p}\in \mathcal{H}_\p(E_\p)$ such that the action of $T_\p$ on the tangent space of $\tau_{\psi,\p}$ is given by the homomorphism
 \[
 T_\p\too E_\p^{\times},\quad t\mapsto t^{1-\sigma_\p}.
 \]
Denote by $\widehat{E}^\times_\p$ the torsion-free part of the $p$-adic completion of $E_\p^\times$ and consider the tensor product $\widehat{E}^\times_{S,\otimes}:=\otimes_{\p\in S}\widehat{E}^\times_\p$ of $\bb{Z}_p$-modules. The $T_S$-equivariant map
\begin{equation}\label{Xi1Intro}
(\Psi^{\diamond}_{S})^{\ast}\colon\Hom_{\Z_p}\big(\St_{S}(\bb{Z}_p),\bb{Z}_p\big)\too \widehat{E}^\times_{S,\otimes}
\end{equation}
is defined using multiplicative integrals
\[
(\Psi^{\diamond}_{S})^{\ast}(\mu):=\mint_{\bb{P}^1(F_S)}\bigotimes_{\p\in S}\bigg(\frac{t_\p-\tau_{\psi,\p}}{t_\p-\bar{\tau}_{\psi,\p}}\bigg) \mrm{d}\mu(t)\ \in  \widehat{E}^\times_{S,\otimes}.
\]
For a subgroup of $T(F)$, the notions of being arithmetic and being $S$-arithmetic agree since $T_S$ is compact.
Thus, the preimage $\Gamma_T=\psi^{-1}(\Gamma^{S})$ under the embedding $\psi\colon T(F)\into G(F)$ is an arithmetic subgroup of $T(F)$.
Restriction followed by the map induced in cohomology by \eqref{Xi1Intro} yields a class
\[
(\Psi^{\diamond}_{S})^{\ast}(c_{A,\epsilon}^S)\in \mrm{H}^q\big(\Gamma_T,\widehat{E}_{S,\otimes}^{\times}\big).
\]
As the abelian group $\Gamma_T$ is a finitely generated of rank $q$, there exists a fundamental class $\vartheta_{\Gamma_T}\in\HH_{q}(\Gamma_T, \Q)$
whose cap product gives an element
\begin{align}\label{capintro}
(\Psi^{\diamond}_{S})^{\ast}(c_{A,\epsilon}^S)\cap \vartheta_{\Gamma_T} \in \widehat{E}_{S,\otimes}^{\times} \otimes_\Z \Q.
\end{align}

\subsubsection{Plectic $p$-adic invariants.}\label{introInv}
 Let $E_\mathfrak{c}/E$ be the narrow ring class field of $E$ of conductor $\mathfrak{c}$ prime to $\frak{f}_A$,
i.e.~the Galois extension of $E$ such that the Artin map induces an isomorphism
\[
\mrm{rec}_E\colon T(F)^{+}\backslash T(\A^{\infty})/U(\mathfrak{c})\xlongrightarrow{\sim}\Gal(E_\mathfrak{c}/E).
\]
Set $\mathcal{G}_{\mathfrak{c}}:=\Gal(E_\mathfrak{c}/E)$ and consider a finite order character $\chi\colon \mathcal{G}_{\mathfrak{c}}\to \overline{\bb{Q}}^{\times}$ of conductor $\mathfrak{c}$ and $\chi_\infty=\epsilon$.
The plectic $p$-adic invariant attached to the pair $(A_{/F},\chi)$ is given by a sum of cap products as in \eqref{capintro} for different embeddings $\psi$ weighted by the character $\chi$.
More precisely, in Section \ref{Normalized} we explain how to construct an adelic and twisted variant $\vartheta_{\chi}$ of the fundamental class, as well as an adelic variant of the map $(\Psi^{\diamond}_{S})^{\ast}$.
Then, the \emph{plectic $p$-adic invariant} for the pair $(A_{/F},\chi)$ is given by the cap product
\[
\mrm{Q}_{A}^{\chi}:=(\Psi^{\diamond}_{S})^{\ast}( c_{A,\epsilon}^S)\cap \vartheta_{\chi}\ \in\  \widehat{E}_{S,\otimes}^{\times}\otimes_\Z\overline{\bb{Q}}.
\]
Now, the elliptic curve $A_{/F}$ has multiplicative reduction at all $\p \in S$.
Therefore, it admits Tate uniformizations $E_\p^{\times}\to A(E_\p)$ and correspondingly maps $\widehat{E}_\p^{\times} \to \widehat{A}(E_\p)$
between the torsion-free parts of the $p$-adic completions.
We denote the tensor product of these maps by
\[
\phi_{\mbox{\tiny $\mrm{Tate}$}} \colon  \widehat{E}_{S,\otimes}^{\times}\too \widehat{A}(E_{S})
\]
where $\widehat{A}(E_{S}):=\bigotimes_{\p\in S}\widehat{A}(E_{\p})$ is a tensor product of $\bb{Z}_p$-modules.
Thus, we can consider the point
\[
\phi_{\mbox{\tiny $\mrm{Tate}$}}\big(\mrm{Q}_{A}^{\chi}\big)\ \in\ \widehat{A}(E_{S})\otimes_\Z \overline{\bb{Q}}.
\]
whose conjectural properties are discussed and supported by numerical experiments in \cite{PlecticInvariants}. 
\begin{remark}
	The elements $\mrm{Q}_{A}^{\chi}$ are not the most refined invariants considered in this paper, but they are already very interesting. They are defined unconditionally for arbitrary base number field $F$ and for any quadratic extension $E$. Moreover, they appear as the value of higher derivatives of anticyclotomic $p$-adic $L$-functions, and Tate's uniformization conjecturally maps them to the $p$-adic image of global points.

\end{remark}

\subsection{Plectic Stark--Heegner points}
We are now ready to address the construction of the more refined  invariants.
In the definition of plectic $p$-adic invariants, we used an integrand function 
\[
\bb{P}^1(F_S)\too\widehat{E}^\times_{S,\otimes},\quad t\mapsto \bigotimes_{\p\in S}\bigg(\frac{t_\p-\tau_{\psi,\p}}{t_\p-\bar{\tau}_{\psi,\p}}\bigg)
\]
that had neither zeros nor poles. Ideally, however, one would want to integrate the function $t\mapsto \otimes_{\p\in S}(t_\p-\tau_{\psi,\p})$ despite it having poles at infinity. It turns out that the latter function naturally lives in a controlled extension $\mathfrak{E}_S$ of the $S$-Steinberg representation (Definition \ref{BreuilExtension}) which allows us to define a $T_S$-equivariant map
\begin{equation}\label{Xi2Intro}
(\Psi_{S})^{\ast}\colon\Hom_{\Z_p}\big(\mathfrak{E}_S,\widehat{A}(E_{S})\big)\to \widehat{A}(E_{S}),\qquad (\Psi_{S})^{\ast}(\mu):=\mu\big(\otimes_{\p\in S}(t_\p-\tau_{\psi,\p})\big).
\end{equation}
For this section and the next, we assume the equality of arithmetic and automorphic $\LI$-invariants of modular elliptic curves over $F$. We note that when $F$ is totally real the equality has recently been established by Rosso and the second named author \cite{GeR}, and independently by Spie\ss\hspace{1mm} \cite{Sp3}.
Under this assumption, the key result is that, at the cost of replacing $\widehat{E}^\times_{S,\otimes}$ with $\widehat{A}(E_{S})$, the measure-valued cohomology class $c_{A,\epsilon}^S$ lifts -- uniquely up to finite torsion -- to a class 
\[
\widehat{c}_{A,\epsilon}^S\in \mrm{H}^q\big(\Gamma^S,\mrm{Hom}_{\bb{Z}_p}(\mathfrak{E}_S,\widehat{A}(E_{S})\big)_{\pi}^\epsilon.
\]

\noindent As before, restriction followed by the map induced in cohomology by \eqref{Xi2Intro} yields a class 
\[
(\Psi_{S})^{\ast}\big(\widehat{c}_{A,\epsilon}^S\big) \in \mrm{H}^q\big(\Gamma_T,\widehat{A}(E_S)\big).
\]
Given a finite order character $\chi\colon \mathcal{G}_{\mathfrak{c}}\to \overline{\bb{Q}}^{\times}$ of conductor $\mathfrak{c}$ with $\chi_\infty=\epsilon$,
the \emph{plectic Stark--Heegner point} attached to the pair $(A_{/F},\chi)$ is given by the cap product 
\[
\mrm{P}_{A}^{\chi}:=(\Psi_S)^{\ast}(\widehat{ c}_{A,\epsilon}^S)\cap \vartheta_{\chi}\ \in\  \widehat{A}(E_{S})\otimes_\Z \overline{\bb{Q}}
\]
of adelic objects as in Section \ref{introInv}.
The point $\mrm{P}_{A}^{\chi}$ does not depend on the choice of the lift $\widehat{c}_{A,\epsilon}^S.$

\noindent Interestingly, we can write down an explicit relation between plectic $p$-adic invariants and plectic Stark--Heegner points.
Recall the ``partial Frobenius'' element $\sigma_\p$, i.e. the non-trivial element of $\Gal(E_\p/F_\p)$.
Since $\sigma_\p$ acts on $\widehat{A}(E_S)$ through the $\p$-th component, the relation
\[
\bigg(\prod_{\p\in S^+}(1-\sigma_\p^{\ast})\prod_{\p\in S^{-}}(1+\sigma_\p^{\ast})\bigg) \mrm{P}_{A}^{\chi}=  \phi_{\mbox{\tiny $\mrm{Tate}$}} \big(\mrm{Q}_{A}^{\chi}\big)
\]
 in $\widehat{A}(E_{S})\otimes_\Z \overline{\bb{Q}}$ is a direct consequence of the definitions.

\subsection{Conjectures}\label{conjectures}
We continue our introduction by stating conjectures that  describe the algebraicity of plectic Stark--Heegner points, a version of Shimura's reciprocity law they are expected to satisfy, and a precise relation to the global arithmetic of elliptic curves. 
Recall we fixed a rational prime $p$, and $S$ a set of $r$ distinct prime ideals of $\cal{O}_F$ above $p$, inert in $E$. We also supposed that 
\[
	\mathfrak{f}_A=p_{S}\cdot\mathfrak{n}^{\mbox{\tiny $+$}}\cdot\mathfrak{n}^{\mbox{\tiny $-$}}\qquad \&\hspace{-0,1em}\qquad  \omega(\mathfrak{n}^{\mbox{\tiny $-$}})\equiv_2 (t-n),
\]
so that the conjecture on the root number of $A_{/E}$ and the parity conjecture imply the congruence $r_\mrm{alg}(A/E)\equiv_2r$. Let  $E_\mathfrak{c}/E$ be the narrow ring class field of $E$ of conductor $\mathfrak{c}$ prime to $\mathfrak{f}_A$.
Under our assumptions, the $\cal{O}_E$-prime ideals generated by the primes in $S$ split completely in $E_\mathfrak{c}^{\mbox{\tiny $+$}}$. Therefore, we can fix an embedding $\iota_\mathfrak{p}\colon E_\mathfrak{c}\hookrightarrow E_\mathfrak{p}$ for each $\mathfrak{p}\in S$ and consider the homomorphism 
\[
\det\colon \wedge^{r} A(E_\mathfrak{c})\longrightarrow  \widehat{A}(E_S)
\]
given by 
\[
\det\big(P_1\wedge\dots\wedge P_{r}\big)=\det \begin{pmatrix}
		\iota_{\mathfrak{p}_1}(P_1)&\dots& \iota_{\mathfrak{p}_r}(P_1)\\
	&\dots&\\
	\iota_{\mathfrak{p}_{1}}(P_r)&\dots& \iota_{\mathfrak{p}_r}(P_r)
	\end{pmatrix}.
\]

\noindent
 Under the assumption $(\mathfrak{c},\mathfrak{f}_A)=1$, for any character $\chi\colon\G_{\mathfrak{c}}\to\overline{\bb{Q}}^\times$ the $L$-functions $L(A/E,\chi,s)$ and  $L(A/E,s)$ share the same root number $\varepsilon(A/E,\chi)=\varepsilon(A/E).$
Thus, if we denote by $r_\mrm{alg}(A/E,\chi)$ the dimension of $A(E_\mathfrak{c})^\chi$, the $\chi$-isotypic component of $A(E_\mathfrak{c})$, we can expect the congruence $r_\mrm{alg}(A/E,\chi)\equiv_2r$ to hold as well.

\begin{conjecture}\label{algebraicity}(Algebraicity $+$ Shimura reciprocity)
If $r_\mrm{alg}(A/E,\chi)\ge r$, then there exists an element $w_{A}^\chi\in \wedge^r A(E_\mathfrak{c})^\chi$ such that 
	\[
	\mrm{P}^{\chi}_{A}=\det\big(w_{A}^\chi\big).
	\]	
\end{conjecture}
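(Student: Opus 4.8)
The plan is to establish Conjecture \ref{algebraicity} only in the known cases where one can transport the construction back to an honest Shimura variety, namely when $r=1$ or $r=0$; the genuinely higher-rank case ($r \ge 2$) is the content of the conjecture and will not be proven here. So let me instead sketch the strategy that one would hope makes the conjecture accessible, organized so that it degenerates to the classical proofs.

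\medskip
\noindent\textbf{Step 1: Reinterpret the cap product as a local point on a quaternionic object.} First I would unwind the definition of $\mrm{P}_A^\chi = (\Psi_S)^\ast(\widehat c_{A,\epsilon}^S)\cap\vartheta_\chi$ and compare it, prime by prime in $S$, with the $\check{\text{C}}$erednik--Drinfeld description of Heegner points. For a single $\p\in S$ (the case $r=1$), the lifted class $\widehat c_{A,\epsilon}^S$ is essentially the cohomological avatar of the rigid-analytic uniformization of the Jacobian of the Shimura curve attached to $B$, and $(\Psi_\p)^\ast$ applied to it is the Coleman-type integral $\oint(t-\tau_{\psi,\p})\,d\mu$ whose exponential is a Stark--Heegner point in the sense of Darmon--Greenberg--Gehrmann--Spie\ss. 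The point is that in this regime the p-adic integral is literally computing the image of a CM point under $\widehat A(E_\p)$-valued Abel--Jacobi, so $\mrm{P}_A^\chi = \mathcal K_\p(\det w_A^\chi)$ with $w_A^\chi$ a combination of CM points weighted by $\chi$, and Shimura reciprocity for CM points gives the Galois equivariance. For $r=0$ the statement is vacuous/Waldspurger. The higher $r$ case is where no Shimura variety of the right dimension over $F$ exists, so this direct route is blocked --- which is exactly the obstacle Nekov\'a\v{r}--Scholl's plectic conjectures are meant to circumvent.

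\medskip
\noindent\textbf{Step 2: Reduce to a rationality statement via the $\LI$-invariant comparison.} Granting the equality of arithmetic and automorphic $\LI$-invariants (assumed in the excerpt, and a theorem over totally real $F$ by \cite{GeR}, \cite{Sp3}), the lift $\widehat c_{A,\epsilon}^S$ exists and is canonical up to torsion; the relation $\big(\prod_{\p\in S^+}(1-\sigma_\p^\ast)\prod_{\p\in S^-}(1+\sigma_\p^\ast)\big)\mrm{P}_A^\chi = \phi_{\text{\tiny Tate}}(\mrm{Q}_A^\chi)$ ties $\mrm{P}_A^\chi$ to the p-adic invariant $\mrm{Q}_A^\chi$. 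I would try to leverage the known algebraicity/rationality of $\mrm{Q}_A^\chi$-type quantities (via their appearance as values of anticyclotomic p-adic $L$-functions and the results of the last part of this paper) to constrain $\mrm{P}_A^\chi$, then bootstrap from the twisted-trace relation. But this only pins down a combination of the $\sigma_\p$-eigencomponents; recovering the full $\det(w_A^\chi)$ with $w_A^\chi$ an exterior product of \emph{global} points --- not merely a local vector satisfying a reciprocity law --- is precisely what is out of reach, since producing global rational points from p-adic integrals is exactly the Stark--Heegner phenomenon, conjectural already for $r=1$ over non-totally-real $F$.

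\medskip
\noindent\textbf{The main obstacle.} The hard part --- indeed the part I do not expect to be able to do unconditionally --- is the production of the global element $w_A^\chi\in\wedge^r A(E_\mathfrak{c})^\chi$ itself. For $r=1$ over $\Q$ or totally real $F$ this is the classical Stark--Heegner conjecture, open except in the $\check{\text{C}}$erednik--Drinfeld (genuinely modular) situation; for $r\ge2$ there is no modular interpretation at all, and the assertion that the p-adic integral $(\Psi_S)^\ast(\widehat c_{A,\epsilon}^S)\cap\vartheta_\chi$ lands in the tiny subgroup $\det(\wedge^r A(E_\mathfrak{c})^\chi)$ of $\widehat A(E_S)\otimes\overline{\Q}$ is the crux of the plectic philosophy. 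Accordingly I would not claim a proof of Conjecture \ref{algebraicity}; I would prove the compatibility with Shimura reciprocity \emph{conditionally} on the existence of $w_A^\chi$ (Galois-equivariance of the construction follows formally from equivariance of the CM torus data and of $\vartheta_\chi$ under $\G_\mathfrak{c}$), and leave the algebraicity as the stated conjecture, offering instead the indirect evidence from anticyclotomic p-adic $L$-functions developed in the body of the paper.
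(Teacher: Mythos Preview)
The statement in question is a \emph{conjecture} in the paper, not a theorem: the paper offers no proof whatsoever, only a remark interpreting the conjecture as saying that the derivatives $L^{(j)}(A/E,\chi,1)$ for $0\le j<r$ measure the obstruction to algebraicity. Your proposal correctly recognizes this and explicitly declines to claim a proof, which is the right call; there is nothing to compare against and no gap to identify in the usual sense.

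That said, a few clarifications on your sketch. Your Step~1 overstates what is known even for $r=1$: when $E/F$ is a CM extension and the quaternion algebra $B$ is chosen so that the relevant Shimura curve exists, the $\check{\text{C}}$erednik--Drinfeld route does give a genuine proof of algebraicity, but outside that regime (e.g.\ $E/F$ not CM, or $F$ not totally real) the $r=1$ case is already the classical Stark--Heegner conjecture and remains open. So ``degenerates to the classical proofs'' should read ``degenerates to the classical conjectures, proved only in the modular/CM setting.'' Your Step~2 suggestion to bootstrap algebraicity from the relation with $\mrm{Q}_A^\chi$ and the $p$-adic $L$-function is not a route the paper pursues or even hints at; the paper treats the Gross--Zagier formula (Theorem~\ref{GZtheorem}) purely as \emph{indirect evidence} for the conjecture, not as an ingredient toward its proof, and there is no known mechanism by which the special-value formula for $\mrm{Q}_A^\chi$ would force $\mrm{P}_A^\chi$ into $\det(\wedge^r A(E_\mathfrak{c})^\chi)$. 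Your final paragraph is accurate and matches the paper's own stance.
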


\begin{remark}
One could interpret the conjecture as claiming  that the values $L^{(j)}(A/E,\chi,1)$, for $0\le j< r$, measure the obstruction for the algebraicity of the plectic point $\mrm{P}^{\chi}_{A}$. 
\end{remark}
\noindent Conjecture \ref{algebraicity} alone does not pin down precisely the relevance of plectic points for the arithmetic of elliptic curves of higher rank. To clarify it, we state the following Kolyvagin--type conjecture.
 \begin{conjecture}
 Suppose that $r_\mrm{alg}(A/E,\chi)\ge r$, then 
 	\[ 	\mrm{P}^{\chi}_{A}\not=0\quad\implies\quad r_\mrm{alg}(A/E,\chi)=r.
 	\]	
	If the $L$-function $L(A/F,s)$ is primitive, then the converse implication also holds.
 \end{conjecture}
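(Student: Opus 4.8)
\noindent We would approach this by a Kolyvagin-style descent in which plectic Stark--Heegner points play the role of Heegner points; the sketch below is conditional, and several of its inputs are presently out of reach. For the first implication one works under Conjecture \ref{algebraicity}, so that $\mrm{P}_A^\chi=\det\big(w_A^\chi\big)$ for a genuine $w_A^\chi=P_1\wedge\dots\wedge P_r\in\wedge^r A(E_\mathfrak{c})^\chi$. Then the hypothesis $\mrm{P}_A^\chi\ne0$ says precisely that the matrix $\big(\iota_{\mathfrak{p}_i}(P_j)\big)_{i,j}$ is invertible, so the $P_j$ are $\overline{\bb{Q}}$-linearly independent and their classes span an $r$-dimensional subspace of $\bigoplus_{\p\in S}\widehat{A}(E_\p)\otimes_\Z\overline{\bb{Q}}$ which the local conditions at $S$ detect faithfully. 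The content of the implication is then an \emph{upper} bound: one must show $\dim_{\overline{\bb{Q}}}A(E_\mathfrak{c})^\chi\le r$, equivalently that the Bloch--Kato Selmer group $\mrm{H}^1_f(E_\mathfrak{c},V_p(A))^\chi$ is at most $r$-dimensional.

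\noindent\textbf{The plectic Euler system.} The crucial step would be to build a norm-coherent family $\{\mrm{P}_{A,\mathfrak{c}\ell_1\cdots\ell_k}^\chi\}$ of plectic points indexed by squarefree products of Kolyvagin primes $\ell_i$ -- inert in $E$, prime to $p\mathfrak{f}_A\mathfrak{c}$, with $\ell_i\equiv-1$ and $a_{\ell_i}(A)\equiv\pm(\ell_i+1)$ modulo a large power of $p$ -- satisfying norm-compatibility relations over $\Gal(E_{\mathfrak{c}\ell_1\cdots\ell_k}/E_{\mathfrak{c}\ell_1\cdots\widehat{\ell_i}\cdots\ell_k})$ governed by the Hecke eigenvalue at $\ell_i$. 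The mechanism should be the interplay between change of level at $\ell_i$ for the lifted measure-valued class $\widehat{c}_{A,\epsilon}^S$ and the action of $U_{\ell_i}$ or $T_{\ell_i}$ on the torus homology classes $\vartheta_\chi$; this is the plectic incarnation of the distribution relations for CM points on Shimura curves, now in cohomological degree $q>1$, and making it precise is essentially a case of the Nekov\'a\v{r}--Scholl plectic conjectures \cite{PlecticNS}, \cite{NekRubinfest}.

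\noindent\textbf{Derived classes and the rank bound.} Granting such a system, one forms the Kolyvagin derivatives $D_{\ell_i}=\sum_j j\,\sigma_{\ell_i}^{\,j}$, applies them to the global points underlying $\mrm{P}_{A,\mathfrak{c}\ell_1\cdots\ell_k}^\chi$, and uses the norm relations to descend to cohomology classes $\kappa_{\ell_1\cdots\ell_k}\in\mrm{H}^1(E_\mathfrak{c},A[p^M])^\chi$ that are unramified outside $\{\ell_i\}$ and whose ramified localization at $\ell_i$ reads off the localization of the plectic point one level down. A local--global argument through Poitou--Tate duality then bounds the Selmer group; the novelty is that one is bounding not a cyclic module but the $r$-dimensional space pinned down by the non-degeneracy of $\det(w_A^\chi)$, so the classical ``each derived class removes one dimension'' bookkeeping must be upgraded to a statement about the full $r\times r$ matrix of localizations. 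The outcome should be that $\mrm{H}^1_f(E_\mathfrak{c},V_p(A))^\chi$ is exactly $r$-dimensional, and hence $r_\mrm{alg}(A/E,\chi)=r$ together with finiteness of the corresponding part of the Tate--Shafarevich group.

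\noindent\textbf{The converse, and the main obstacle.} For the reverse implication -- which does not need Conjecture \ref{algebraicity}, but does use the rank part of Birch--Swinnerton-Dyer -- assume $L(A/F,s)$ primitive and $r_\mrm{alg}(A/E,\chi)=r$. By the comparison theorem of this paper, $\mrm{Q}_A^\chi$ equals, up to explicit nonzero periods, the $r$-fold partial derivative of the anticyclotomic $p$-adic $L$-function of $(A_{/F},\chi)$ along the directions indexed by $S$. Combining the equality $\ord_{s=1}L(A/E,\chi,s)=r_\mrm{alg}(A/E,\chi)$ with the interpolation formula and an analysis of the exceptional zeros at the multiplicative primes $\p\in S$ -- where the relevant $\LI$-invariants are conjecturally nonzero -- yields $\mrm{Q}_A^\chi\ne0$; the primitivity of $L(A/F,s)$ is exactly what rules out the degenerate vanishing of Remark \ref{extraVanishing}, e.g.\ the collapse of $\det$ when $A$ descends to a proper subfield of $F$. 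The identity
\[
\bigg(\prod_{\p\in S^+}(1-\sigma_\p^{\ast})\prod_{\p\in S^{-}}(1+\sigma_\p^{\ast})\bigg)\mrm{P}_{A}^{\chi}=\phi_{\mbox{\tiny $\mrm{Tate}$}}\big(\mrm{Q}_{A}^{\chi}\big),
\]
together with the injectivity of $\phi_{\mbox{\tiny $\mrm{Tate}$}}$ after $\otimes_\Z\overline{\bb{Q}}$ on the unit part in which $\mrm{Q}_A^\chi$ lives (the integrand of $(\Psi^{\diamond}_{S})^{\ast}$ takes values in units), then forces $\mrm{P}_A^\chi\ne0$. We expect the plectic Euler system to be the real obstacle: producing global points over ring class fields of varying conductor with \emph{provable} norm-compatibility in degree $q>1$ is beyond current methods -- it would amount to a localization of the Nekov\'a\v{r}--Scholl conjectures -- and even granting it, the higher-rank Kolyvagin descent and the unconditional rank formula of Birch--Swinnerton-Dyer needed for the converse remain serious hurdles, which is why the statement is recorded here only as a conjecture.
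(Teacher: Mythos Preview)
The statement you are addressing is a \emph{conjecture} in the paper, not a theorem; the paper offers no proof whatsoever. It is introduced as a ``Kolyvagin-type conjecture'' meant to sharpen the arithmetic significance of plectic points beyond what Conjecture~\ref{algebraicity} alone predicts, and the surrounding text (including Remark~\ref{extraVanishing} and the remark immediately following the conjecture) is motivational. There is therefore no paper argument to compare your proposal against.

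Your write-up is accordingly not a proof but a heuristic sketch, and you say as much in your final sentence. As heuristics go, the outline for the forward implication is the natural one and is indeed what the paper's phrase ``Kolyvagin-type'' gestures at: build a plectic Euler system varying the ring-class conductor by auxiliary inert primes, form derivative classes, and bound the Selmer rank. You correctly flag that producing such a system in cohomological degree $q>1$ with provable norm relations is the real obstruction and is tantamount to a form of the Nekov\'a\v{r}--Scholl conjectures. For the converse, your chain of implications is more tenuous than you indicate: the paper's Gross--Zagier formula relates $\mrm{Q}_A^\chi$ to the $r$-th derivative of the \emph{anticyclotomic $p$-adic} $L$-function at $\chi$, whereas your input is the order of vanishing of the \emph{complex} $L$-function; bridging these requires not just BSD and nonvanishing of $\LI$-invariants but a genuine exceptional-zero conjecture comparing the two $L$-functions at a character unramified at $S$, which lies outside the interpolation range. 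In short, your proposal is a reasonable articulation of why the statement is plausible, but it is not a proof, and the paper does not claim one either.
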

\noindent The assumption on the primitivity of the $L$-function -- i.e. that $L(A/F,s)$ does not factor as a product of automorphic $L$-functions -- is likely too restrictive. Nevertheless, it is concise and takes care of incovenient phenomena (Remark \ref{extraVanishing}).
\begin{remark}
	When $r_\mrm{alg}(A/E,\chi)<r$ we expect the plectic point $\mrm{P}^{\chi}_{A}$ to be non-trivial.
		However, we cannot guess whether the quantity contains any specific arithmetic information.
\end{remark}

\subsection{Derivatives of anticyclotomic p-adic L-functions}
We conclude the introduction by presenting a relation between  plectic $p$-adic invariants and higher derivatives of $p$-adic $L$-functions. Note then that we do not need to assume the equality of arithmetic and automorphic $\cal{L}$-invariants in this section.

\noindent Let $\mathfrak{c}$ be a non-zero ideal of $F$ coprime to $\mathfrak{f}_A$. If we denote by $E_{\mathfrak{c},\mbox{\tiny$S$}}/E$ the union of the narrow ring class fields of $E$ of conductor $\mathfrak{c}\cdot p_S^n$ for $n\geq 0$, then the Artin map induces an isomorphism 
\[
\rec_E\colon \overline{T(F)}^+\backslash T(\A^{\infty})/U(\mathfrak{c})^{S}\xlongrightarrow{\sim}\Gal\big(E_{\mathfrak{c},\mbox{\tiny$S$}}/E\big)
\]
where $\overline{T(F)}^+$ is the closure of $T(F)^+$ in $T(\A^{\infty})/U(\mathfrak{c})^{S}$. We define $\cal{G}_{\mathfrak{c},\mbox{\tiny $S$}}:=\Gal\big(E_{\mathfrak{c},\mbox{\tiny$S$}}/E\big)$.

\noindent The anticyclotomic $p$-adic $L$-function attached to the tuple $(A_{/E},\epsilon,\mathfrak{c},S)$ depends on a choice of base-point $x\in\bb{P}^1(F_S)$ (Remark \ref{uniquenessL-function}) and it is an element of the completed group ring 
\[
\scr{L}_{S}^\epsilon(A/E)_\mathfrak{c} \in \bb{Z}_p\llbracket \cal{G}_{\mathfrak{c},\mbox{\tiny $S$}}\rrbracket.
\]
Such function was constructed in the generality considered here in (\cite{Felix}, \cite{BG2}), building on Waldspurger formula \cite{YZZ} and the work of File-Martin-Pitale \cite{FileMartinPitale} on test-vectors. The anticyclotomic $p$-adic $L$-function $\scr{L}_{S}^\epsilon(A/E)_\mathfrak{c}$ interpolates central $L$-values of twists by finite order characters $\chi\colon\cal{G}_{\mathfrak{c},\mbox{\tiny $S$}}\to\overline{\bb{Q}}^\times$ ramified at every prime in $S$, and it vanishes at all the other characters (Theorem \ref{corolp-adic}).
Moreover, $\scr{L}_{S}^\epsilon(A/E)_\mathfrak{c}$ vanishes to order at least $r=\lvert S\rvert$ at any character of $\mathcal{G}_{\mathfrak{c}}$  (\cite{BG2}, Theorem 5.5), i.e.
\[
\scr{L}_{S}^\epsilon(A/E)_\mathfrak{c}\in I^{r}
\]
where  $I=\ker\big(\Z_p\llbracket\mathcal{G}_{\mathfrak{c},\mbox{\tiny $S$}}^{\mbox{\tiny $+$}}\rrbracket\to \Z_p[\mathcal{G}_{\mathfrak{c}}]\big)$ is the relative augmentation ideal. It is then natural to try to compute the values of the $r$-th derivative at those characters.

\subsubsection{$p$-adic Gross--Zagier formula.}
Inspired by the work of Bertolini and Darmon \cite{CDuniformization}, we prove that the values of the $r$-th derivative of $\scr{L}_{S}^\epsilon(A/E)_\mathfrak{c}$ compute plectic $p$-adic invariants. 
\begin{definition}
	For any character $\chi\colon\G_{\mathfrak{c}}\to\overline{\bb{Q}}^\times$ with $\chi_\infty=\epsilon$ we set
	\[
	\chi\Big(\partial^{r}\scr{L}_{S}^\epsilon(A/E)_\mathfrak{c}\Big):= \scr{L}_{S}^\epsilon(A/E)_\mathfrak{c}\otimes 1\quad\text{in}\quad I^r/I^{r+1}\otimes_\chi\overline{\bb{Q}},
	\]
	where we consider $I^r/I^{r+1}$ as a $\Z[\G_{\mathfrak{c}}]$-module. 
\end{definition}
\noindent Note that the $\chi$-component of the $r$-th derivative is independent of the choice of base-point $x\in\bb{P}^1(F_S)$ made in constructing the $p$-adic $L$-function (Remark \ref{well-definedderivative}).
Now, as for every $\p\in S$ the image of the local Artin map from $T_\p$ to $\G_{\mathfrak{c}}$ is trivial, the function
\[
\dd\rec_\p\colon \widehat{E}_\p^{\times} \to I/I^2,\qquad t\mapsto \rec_\p(t)-1 \pmod{I^2}
\]
is a group homomorphism.
Taking tensor products yields the map
$\dd\rec_S \colon \widehat{E}_{S,\otimes}^{\times} \to I^{r}/I^{r+1}$, which extends to a $\Z[\G_{\mathfrak{c}}]$-linear homomorphism $\dd\rec_S \colon \widehat{E}_{S,\otimes}^{\times}\otimes_{\chi}\overline{\bb{Q}} \to I^{r}/I^{r+1}\otimes_{\chi}\overline{\bb{Q}}$. Then, our $p$-adic Gross--Zagier formula takes the following form.

\begin{thmx}(Theorem \ref{GZtheorem})
For any character $\chi\colon\G_{\mathfrak{c}}\to\overline{\bb{Q}}^\times$ with $\chi_\infty=\epsilon$ the equality
\[
2^{r}\cdot \chi\Big(\partial^{r}\scr{L}_{S}^\epsilon(A/E)_\mathfrak{c}\Big) = \dd\rec_S(\mrm{Q}_{A}^{\chi})
\]
holds in $I^{r}/I^{r+1}\otimes_{\chi}\overline{\bb{Q}}$.
\end{thmx}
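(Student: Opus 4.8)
\emph{Overview.} The plan is to exhibit both sides of the equality as cap products of the \emph{same} measure class $c_{A,\epsilon}^S$ against the adelic fundamental class $\vartheta_\chi$ — one via $(\Psi^{\diamond}_{S})^{\ast}$ followed by $\dd\rec_S$, the other via the construction of the $p$-adic $L$-function followed by the reduction $I^{r}\to I^{r}/I^{r+1}$ — and then to match the two, following the template of Bertolini--Darmon \cite{CDuniformization}. First note that $\dd\rec_S\colon\widehat{E}_{S,\otimes}^{\times}\to I^{r}/I^{r+1}$, being the homomorphism induced by the additive maps $\dd\rec_\p$, turns multiplicative integrals into additive ones and commutes with restriction to $\Gamma_T$, with passage to cohomology and with the cap product. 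Since $\mrm{Q}_{A}^{\chi}=(\Psi^{\diamond}_{S})^{\ast}(c_{A,\epsilon}^S)\cap\vartheta_\chi$ with $(\Psi^{\diamond}_{S})^{\ast}(\mu)=\mint_{\PP(F_S)}\bigotimes_{\p\in S}\big(\tfrac{t_\p-\tau_{\psi,\p}}{t_\p-\bar{\tau}_{\psi,\p}}\big)\,\dd\mu(t)$ on coefficients, this already gives
\[
\dd\rec_S(\mrm{Q}_{A}^{\chi})=\Big(\mu\mapsto\int_{\PP(F_S)}\prod_{\p\in S}\dd\rec_\p\big(\tfrac{t_\p-\tau_{\psi,\p}}{t_\p-\bar{\tau}_{\psi,\p}}\big)\,\dd\mu(t)\Big)(c_{A,\epsilon}^S)\cap\vartheta_\chi .
\]

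\emph{The local identity.} Fix $\p\in S$. As $\p$ is inert in $E$ we have $\bar{\tau}_{\psi,\p}=\sigma_\p(\tau_{\psi,\p})$, and $\sigma_\p$ acts on $T_\p=E_\p^{\times}/F_\p^{\times}$ by inversion since $t\cdot t^{\sigma_\p}=N_{E_\p/F_\p}(t)\in F_\p^{\times}$. Consequently the class $[\,t_\p-\tau_{\psi,\p}\,]\in T_\p$ depends continuously on $t_\p\in\PP(F_\p)$ — being the identity at $t_\p=\infty$ — and satisfies $[\,t_\p-\bar{\tau}_{\psi,\p}\,]=[\,t_\p-\tau_{\psi,\p}\,]^{-1}$. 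Since $\dd\rec_\p$ kills $F_\p^{\times}$, hence is well-defined on $T_\p$, one gets the key local identity
\[
\dd\rec_\p\big(\tfrac{t_\p-\tau_{\psi,\p}}{t_\p-\bar{\tau}_{\psi,\p}}\big)=\dd\rec_\p\big([\,t_\p-\tau_{\psi,\p}\,]^{2}\big)=2\,\dd\rec_\p\big([\,t_\p-\tau_{\psi,\p}\,]\big),
\]
so that $\dd\rec_S(\mrm{Q}_{A}^{\chi})=2^{r}\big(\mu\mapsto\int_{\PP(F_S)}\prod_{\p\in S}\dd\rec_\p([\,t_\p-\tau_{\psi,\p}\,])\,\dd\mu(t)\big)(c_{A,\epsilon}^S)\cap\vartheta_\chi$.

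\emph{Matching the $r$-th derivative.} It remains to identify the bracketed functional — applied to $c_{A,\epsilon}^S$ and capped against $\vartheta_\chi$ — with $\chi(\partial^{r}\scr{L}_{S}^{\epsilon}(A/E)_\mathfrak{c})$. Here I would use the construction of the anticyclotomic $p$-adic $L$-function recalled after Theorem~\ref{corolp-adic} (following \cite{Felix,BG2}): relative to the base point $x\in\PP(F_S)$ it is the cap product against the $\Z_p\llbracket\G_{\mathfrak{c},S}\rrbracket$-valued adelic fundamental class — which $\chi$-specializes to $\vartheta_\chi$ on the $\G_{\mathfrak{c}}$-part — of the pushforward of $c_{A,\epsilon}^S$ under the map sending $t$ to the group-like element attached to $\rec_S$ of the tuple $\big((t_\p-\tau_{\psi,\p})/(x_\p-\tau_{\psi,\p})\big)_{\p\in S}$. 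Using $[\rec_\p(u)]=1+\dd\rec_\p(u)\pmod{I^{2}}$, this pushforward expands modulo $I^{r+1}$ into a sum over $T\subseteq S$ of integrals of $\prod_{\p\in T}\big(\dd\rec_\p([\,t_\p-\tau_{\psi,\p}\,])-\dd\rec_\p([\,x_\p-\tau_{\psi,\p}\,])\big)$; but $c_{A,\epsilon}^S$ takes values in the dual of $\St_S=\bigotimes_{\p\in S}\St_\p$, hence annihilates every function that is constant in some single variable $t_\p$, which eliminates all terms except the one with $T=S$ and all factors equal to $\dd\rec_\p([\,t_\p-\tau_{\psi,\p}\,])$. (In particular this re-derives $\scr{L}_{S}^{\epsilon}(A/E)_\mathfrak{c}\in I^{r}$, cf. \cite{BG2}, Theorem~5.5, and the base-point independence of Remark~\ref{well-definedderivative}.) Specializing the $\G_{\mathfrak{c}}$-part along $\chi$ then identifies $\chi(\partial^{r}\scr{L}_{S}^{\epsilon}(A/E)_\mathfrak{c})$ with the bracketed functional of the previous paragraph, and the factor $2^{r}$ then yields the asserted equality.

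\emph{Main obstacle.} The substantive difficulty is the last step: showing that the anticyclotomic $p$-adic $L$-function of \cite{Felix,BG2} — built from Waldspurger's formula \cite{YZZ} and File--Martin--Pitale test vectors \cite{FileMartinPitale} — indeed coincides, with the correct normalization, with the cohomological cap product of $c_{A,\epsilon}^S$ against the adelic fundamental class and with the $S$-local coordinates $t_\p-\tau_{\psi,\p}$ of $(\Psi^{\diamond}_{S})^{\ast}$; this is an explicit-reciprocity-type comparison of two a priori different recipes for the same object, and it is in tracking the constant through Waldspurger's local linear periods that the $2^{r}$ must ultimately be pinned down. Auxiliary points, none deep but all needing care: that the dichotomy $S=S^{+}\cup S^{-}$ (the sign $\chi_\p^{\pm}$ of the Steinberg twist) is encoded in $c_{A,\epsilon}^S$ and so enters the two sides symmetrically; that the cap product with $\vartheta_\chi$ is compatible with $I^{r}\to I^{r}/I^{r+1}$ and with $\chi$-specialization; and the convergence of the $p$-adic and measure-theoretic manipulations. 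By contrast, once the framework is in place the computation yielding $2^{r}$ is elementary, exactly as in \cite{CDuniformization}.
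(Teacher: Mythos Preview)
Your overall architecture is right and matches the paper's: both sides are cap products of $c_{A,\epsilon}^S$ with $\vartheta_\chi$, and the factor $2^{r}$ arises from the local identity $\big[\tfrac{t_\p-\tau_{\psi,\p}}{t_\p-\bar\tau_{\psi,\p}}\big]=[t_\p-\tau_{\psi,\p}]^{2}$ in $T_\p$. In the paper this is packaged as Lemma~\ref{keyidentity} ($\partial(f_{\psi,\p})(t)=t^{1-\sigma_\p}$, proved via a Skolem--Noether change of coordinates) together with Remark~\ref{imprem} ($t\mapsto t^{1-\sigma_\p}$ is squaring on $T_\p$); your direct computation via $(t-\tau)(t-\bar\tau)=N_{E_\p/F_\p}(t-\tau)\in F_\p^{\times}$ is an equivalent route to the same fact. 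Likewise, your expansion of $\prod_\p(1+\dd\rec_\p(\cdots))$ modulo $I^{r+1}$, killing terms constant in some variable, is exactly the content of the paper's Lemma~\ref{characters}.

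Where your write-up goes wrong is the ``Main obstacle'' paragraph. You believe the anticyclotomic $p$-adic $L$-function is built \emph{externally} from Waldspurger periods and test vectors and must then be compared, via an ``explicit-reciprocity-type'' statement, to a cohomological cap product. But in this paper the $p$-adic $L$-function is \emph{defined} as $\iota_x^{\ast}(c_{A,\epsilon}^S)\cap\widetilde{\Theta}_{\chi_{\mathfrak{c}}}$, i.e.\ as the cohomological object itself (see the definition preceding Remark~\ref{uniquenessL-function}); the Waldspurger and File--Martin--Pitale input is confined to the interpolation property (Theorem~\ref{corolp-adic}), which plays no role in the Gross--Zagier formula. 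So there is nothing to compare, and in particular the factor $2^{r}$ does \emph{not} come from tracking constants through local linear periods --- it comes entirely from the local squaring identity you already established.

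A smaller point you glossed over: the $p$-adic $L$-function is defined using the torus-action identification $\iota_x\colon\PP(F_S)\xrightarrow{\sim}T_S$, not via $t\mapsto[(t_\p-\tau_{\psi,\p})/(x_\p-\tau_{\psi,\p})]$ as you asserted. These two maps agree only after squaring (this is precisely what Lemma~\ref{keyidentity} computes), hence differ by a locally constant function into the $2$-torsion of $T_\p$. Over $\overline{\Q}$ this is harmless, but you should either note it or, better, work with $\iota_x$ directly as the paper does; then your ``Matching'' step becomes immediate from the definition, with no obstacle at all.
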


\begin{acknowledgements}
We would like to express our deep gratitute to Xavier Guitart and Marc Masdeu for sharing the exploration of the plectic world, and to Matteo Tamiozzo for explaining Nekov\'a$\check{\text{r}}$ and Scholl's conjectures to us. Moreover, we thank Felix Bergunde, Dante Bonolis, Henri Darmon, David Lilienfeldt, Jan Nekov\'a$\check{\text{r}}$, Kartik Prasanna, Tony Scholl, Nicolas Simard, Ari Shnidman, Christopher Skinner and Jan Vonk for their support and for many enriching conversations. While working on this article the first named author was a Simons Junior Fellow, and the second named author was visiting McGill University, supported by Deutsche Forschungsgemeinschaft.
\end{acknowledgements}

\section{Notations and setup}
All rings are commutative and unital.
The group of invertible elements of a ring $R$ will be denoted by $R^{\times}$.
For a group $H$, we write $R[H]$ for the group algebra of $H$ with coefficients in $R$.
If $H=\varprojlim_n H_n$ is a pro-finite group, we let
\[\Z_p\llbracket H\rrbracket=\varprojlim_n \Z_p[H_n]\]
be the completed group algebra of $H$ with coefficients in $\Z_p$.
Given a group homomorphism $\chi\colon H\to R^{\times}$, we let $R(\chi)$ denote the representation of $H$ whose underlying $R$-module is $R$ itself and on which $H$ acts via the character $\chi$.
If $N$ is another $R[H]$-module, we put $N(\chi)=N\otimes_{R}R(\chi)$.

\noindent Given topological spaces $X,Y$, we will write $C(X,Y)$ for the set of continuous functions from $X$ to $Y$, and $C_{c}(X,Y) \subseteq C(X,Y)$ for the subset of continuous functions with compact support.
For a set $X$ and a subset $U\subseteq X$ the characteristic function $\cf_{U}\colon X\to \left\{0,1\right\}$ is defined by
\begin{align*}
 \cf_{U}(x) = \begin{cases}
	       1 & \mbox{if } x\in U,\\
	       0 & \mbox{else}.
	      \end{cases}
\end{align*}

\noindent\textbf{A number field.}
Throughout the article we fix a number field $F$ with ring of integers $\OO_{F}$.
We denote the set of archimedean places of $F$ by $\Sigma_{\infty}$.
For a non-zero ideal $\mathfrak{a} \subseteq \OO_F$ we set $N(\mathfrak{a})=\left|\OO_F/\mathfrak{a}\right|$.
If $v$ is a place of $F$, we denote by $F_{v}$ the completion of $F$ at $v$.
If $\p$ is a finite place, we let $\OO_\p$ denote the valuation ring of $F_\p$ and write $\ord_{\p}$ for the normalized valuation.
For a finite set $\Sigma$ of places of $F$ we define the "$\Sigma$-truncated adeles" $\A^{\Sigma}$ as the restricted product of the completions $F_{v}$ over all places $v$ which are not in $\Sigma$. We often write $\A^{\Sigma,\infty}$ instead of $\A^{\Sigma\cup \Sigma_{\infty}}$.
If $H$ is an algebraic group over $F$ and $v$ is a place of $F$, we write $H_v=H(F_v)$ and put $H_\Sigma=\prod_{v\in \Sigma} H_v$.
Similarly as before, we abbreviate $H_\infty=H_{\Sigma_\infty}$.

\vspace{3mm}
\noindent
\textbf{A set and an elliptic curve}.
We fix a rational prime $p$ and a set of distinct primes 
\begin{equation}
	S=\{\mathfrak{p}_1,\dots,\mathfrak{p}_r\}
\end{equation}
 of $F$ lying over $p$. Moreover, we consider an elliptic curve $A$ over $F$ of conductor $\mathfrak{f}_A\subseteq \OO_{F}$ such that $\p$ divides $\mathfrak{f}_A$ \emph{exactly} once for every $\p \in S$ or, equivalently, that $A_{/F}$ has multiplicative reduction for all primes $\p\in S$.
We assume that $A$ is modular, meaning that there exists a cuspidal automorphic representations $\Pi_A$ of $\PGL_2(\A)$ attached to $A$, which is cohomological with respect to the trivial coefficient system and has conductor $\mathfrak{f}_A$.
In particular, for every $\p \in S$ the local component $\Pi_{A,\p}$ is an unramified twist of the Steinberg representation.

\vspace{3mm}
\noindent
\textbf{A quadratic extension}.
We fix a quadratic extension $E/F$ such that 
\begin{itemize}
	\item [$\bfcdot$] every prime dividing the conductor $\mathfrak{f}_A$ is unramified in $E$, and 
	\item [$\bfcdot$] the primes in $S$ are inert.
\end{itemize} 
For $\p\in S$ the non-trivial element of $\Gal(E_\p/F_\p)$ will be denoted by $\sigma_\p$.
We may decompose
\begin{equation*}
\mathfrak{f}_A=p_S\cdot\mathfrak{n}^{\mbox{\tiny $+$}}\cdot\mathfrak{n}^{\mbox{\tiny $-$}},
\end{equation*}
where $\mathfrak{n}^{\mbox{\tiny $+$}}$ and $\mathfrak{n}^{\mbox{\tiny $-$}}$ are coprime, and such that a prime divisor of $\mathfrak{f}_A$ divides $\mathfrak{n}^{\mbox{\tiny $+$}}$ if and only if it split in $E$.
Let $\infty_1,\ldots, \infty_t$ denote the real places of F ordered such that the first $n$ are precisely those
that split in $E$.
We suppose that the ideal $\mathfrak{n}^{\mbox{\tiny $-$}}$ is square-free and denote by $\omega(\mathfrak{n}^{\mbox{\tiny $-$}})$ the number of its prime factors, then the set
\[
\Sigma(A/E)=S\cup\big\{\mathfrak{q}\mid \mathfrak{n}^{\mbox{\tiny $-$}}\big\}\cup\big\{\infty_{n+1},\dots,\infty_t\big\}
\]
controls the root number of $A_{/E}$ which is conjecturally equal to
\[
\varepsilon(A/E)=(-1)^{r+\omega(\mathfrak{n}^{\mbox{\tiny $-$}})+(t-n)}.
\]
We assume that $\omega(\mathfrak{n}^{\mbox{\tiny $-$}}) \equiv (t-n) \pmod{2}$.

\vspace{3mm}
\noindent
\textbf{Algebraic groups}.
Under our assumptions, there exists a quaternion algebra $B_{/F}$ with
\begin{itemize}
	\item [$\bfcdot$] $\disc(B)=\mathfrak{n}^{\mbox{\tiny $-$}}$, and
	\item [$\bfcdot$]  $B$ is ramified at infinity only at the real places $\infty_{n+1},\ldots,\infty_{t}.$
\end{itemize} 
Note that $B$ is split at all primes in $S$.
We denote by $G=B^\times/F^\times$ the associated $F$-algebraic group of units modulo center.
The automorphic representation $\Pi_A$ admits a Jacquet-Langlands transfer $\pi$ to $G$.
The local component $\pi_\q$ of $\pi$ is one-dimensional for every prime $\q\mid\disc(B)$. 
Furthermore, there exists an embedding of $E$ into $B$ that induces an embedding $\psi$ of the $F$-algebraic torus $T=E^\times/F^\times$ into $G$.
Let $q$ be the number of archimedean places of $F$ at which $B$ is split.
By Dirichlet's unit theorem, the rank of every arithmetic subgroup of $T(F)$ is equal to $q$.
For every prime $\q\nmid \disc(B)$ we fix an isomorphism of $B_\q$ with the algebra $\mrm{M}_2(F_\q)$ of two-by-two matrices over $F_\q$ obtaining isomorphisms 
\begin{equation}\label{choseniso}
G_\q\cong \PGL_2(F_\q)\qquad \forall\ \q\nmid \disc(B).
\end{equation}
We write $G_{\infty}^{+}\subseteq G_{\infty}$ for the connected component of the identity, i.e.~the subgroup of elements whose reduced norm is positive for every real place of $F$.
If $H$ is any subgroup of $G_\infty$ we set 
\[
H^{+}:=H\cap G_{\infty}^{+}.
\]

\vspace{3mm}
\noindent
\textbf{Congruence subgroups}.
Let $\n\subseteq\OO_F$ be a non-zero ideal coprime to $\disc(B)$. For every prime $\q$ of $F$ not dividing the discriminant of $B$ we define the compact open subgroup $K_\q(\n)\subseteq G_\q$ as the image  under (\ref{choseniso}) of the subgroup
\[
\left\{g \in \PGL_2(\OO_\q)\mid g \mbox{ upper triangular} \bmod \n\right\}.
\]
For $\q\mid \disc(B)$ we let $K_\q(\n)$ be the image in $G_\p$ of the group of units of the maximal order in $B_\q$.
If $\Sigma$ is a finite set of primes of $F$ we define
\[
K(\n)^{\Sigma}:=\prod_{\q\notin \Sigma}K_\q(\n)\subseteq G(\A^{\Sigma,\infty}).
\]
If $\Sigma$ is the empty set, we drop it from the notation. Further, we set 
\[
	\mathfrak{f}=\mathfrak{f}_A/\disc(B).
\]
Then, if $\pi^{\infty}$ denotes the finite part of $\pi$, Casselman's result on newforms gives (\cite{Ca}) 
\[
	\dim_\C(\pi^\infty)^{K(\mathfrak{f})}=1.
\]
For a finite place $\q$ of $F$ all $\OO_{\q}$-orders in $E_\q$ are of the form $\OO_{\q} + \q^m \OO_{E,\q}$ for some $m\geq 0$ and $\OO_{E,\q}$ the maximal $\OO_{\q}$-order in $E_\q$.
We write $U_{\q}^{(m)}$ for the image of $(\OO_{\q} + \q^m \OO_{E,\q})^{\times}$ in $T_{\q}$, and given a non-zero ideal $\m \subseteq \OO_F$, a finite set $\Sigma$ of places of $F$, we define
\[
U(\m)^{\Sigma} := \prod_{\q \notin \Sigma} U_{\q}^{(\ord_\q(m))}.
\]
The preimage of $K(\mathfrak{f})^{S}$ under $\psi$ is of the form $U(\mathfrak{c})^{S}$ for a non-zero ideal $\mathfrak{c}\subseteq \OO_F$ that is coprime to $\disc(B)$ and all primes in $S$. In addition, we assume that $\mathfrak{c}$ is even coprime to $\mathfrak{f}_A=\disc(B)\mathfrak{f}$.


\section{A cohomology class valued in harmonic cochains}

\subsection{Cohomology of arithmetic groups}\label{Arithmetic}
Let $\n\subseteq \mathcal{O}_F$ be a non-zero ideal that is coprime to $\disc(B)$ and $N$ an abelian group.
The $\Z$-module of (continuous) functions $C\big(G(\A^\infty)/K(\n),N\big)$
 carries a natural $G(F)$-action given by
\[
(\gamma\cdot\Phi)(g)=\Phi(\gamma^{-1}g).
\]
As the double quotient $G(F)^{+}\backslash G(\A^\infty)/K(\n)$ is finite by Borel's theorem (\cite{BoCN}, Theorem 5.1),
we can fix a system of representatives $h_1,\ldots,h_\ell$ and put $\Gamma_{i}=G(F)^{+}\cap h_i K(\n) h_i^{-1}.$
Then, Shapiro's lemma yields natural isomorphisms
\begin{align}\label{Shapiro}
\HH^{d}\big(G(F)^{+},C\big(G(\A^\infty)/K(\n),N\big)\big)\xlongrightarrow{\cong}\bigoplus_{i=1}^{\ell}\HH^{d}(\Gamma_{i}, N)
\end{align}
for every $d\geq 0.$
Thus, if $N$ is a $\Q$-vector space or if the arithmetic groups $\Gamma_{i}$ are torsion-free, the cohomology groups $\HH^{d}\big(G(F)^{+},C\big(G(\A^\infty)/K(\n),N\big)\big)$ are naturally isomorphic to the singular cohomology with coefficients in $N$ of the locally symmetric space of level $K(\n)$ associated to $G$. We believe it justifies the following notation
\begin{equation}\label{notation}
\mrm{H}^d\big(X_G(\n),N\big):= \HH^{d}\big(G(F)^{+},C\big(G(\A^\infty)/K(\n),N\big)\big).
\end{equation}
Since arithmetic groups are of type (\hspace{-0.6mm}\emph{VFL}) (\cite{BS}, Section 11.1) one deduces the following two statements (\cite{Se2}, $p.101$):
\begin{itemize}
	\item [$\bfcdot$] for $N$ a finitely generated module over a Noetherian ring $R$, the $R$-module 
	\[
	\mrm{H}^d\big(X_G(\n),N\big)
	\] is finitely generated;
	\item [$\bfcdot$] if $N$ is a flat $\Z$-module, then the canonical map
\begin{equation}\label{flat}
\mrm{H}^d\big(X_G(\n),\Z\big)\otimes_{\Z}N \xlongrightarrow{\cong} \mrm{H}^d\big(X_G(\n),N\big)
\end{equation}
is an isomorphism. 
\end{itemize}
\noindent
These cohomology groups carry a natural action of the finite, abelian, $2$-torsion group 
\[
G(F)/G(F)^+=G_\infty/G_\infty^{+}=\pi_0(G_\infty).
\]
Given a character $\epsilon\colon \pi_0(G_\infty)\to \{\pm 1\}$ and a $\pi_0(G_\infty)$-module $M$ we denote by $M^{\epsilon}$ the $\epsilon$-isotypic component of $M$.
Let $\Sigma$ be a finite set of primes containing all primes dividing  $\mrm{disc}(B)$.
The cohomology groups $\mrm{H}^d\big(X_G(\mathfrak{n}),N\big)$ carry an action of the Hecke algebra
\[
\mathbb{T}^{\Sigma}:=C_c\big(K(\mathfrak{n})^{\Sigma}\backslash G(\A^{\Sigma,\infty})/K(\n)^\Sigma,\Z\big)
\]
which commutes with the action of $\pi_0(G_\infty)$.
Let $\lambda_\pi\colon \mathbb{T}^{\Sigma} \to \Z$ be the character of  associated to $\pi$ by the following rule
\[
t.v=\lambda_\pi(t)\cdot v\qquad \forall\ t\in \mathbb{T}^{\Sigma},\quad\forall\ v\in(\pi^{\infty})^{K(\mathfrak{f})}.
\]
Given a $\mathbb{T}^{\Sigma}$-module $M$ we put
\[
M_\pi=\left\{m\in M \mid t.m=\lambda_A(t)\cdot m\quad \forall t\in \mathbb{T}^{\Sigma} \right\}.
\]

\begin{proposition}\label{arithclass}
For any flat $\Z$-algebra $R$ and any character  $\epsilon\colon \pi_0(G_\infty)\to \left\{\pm 1\right\}$ we have
\[
\rk_R\hspace{0.2mm} \mrm{H}^d\big(X_G(\mathfrak{f}),R\big)_{\pi}^{\epsilon}=\begin{cases}
1 & \mbox{for } d=q\\
0 & \mbox{for } d<q.
\end{cases}
\]
Furthermore, for every proper divisor $\n$ of $\mathfrak{f}$ and every $d\ge0$ we have
\[
\rk_R\hspace{0.2mm} \mrm{H}^d\big(X_G(\mathfrak{n}),R\big)_{\pi}^{\epsilon}=0.
\]
\end{proposition}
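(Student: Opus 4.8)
The plan is to compute the $[\pi,\epsilon]$-isotypic part of the cohomology of the locally symmetric space $X_G(\n)$ by passing to complex (or $p$-adic field) coefficients, where automorphic methods give us complete control, and then to descend to an arbitrary flat $\Z$-algebra $R$ using the flatness isomorphism \eqref{flat}. Since $R$ is flat over $\Z$ and $\mrm{H}^d(X_G(\n),\Z)$ is a finitely generated $\Z$-module (type (\emph{VFL})), we have $\rk_R \mrm{H}^d(X_G(\n),R)^\epsilon_\pi = \rk_\Z \mrm{H}^d(X_G(\n),\Z)^\epsilon_\pi = \dim_\C \mrm{H}^d(X_G(\n),\C)^\epsilon_\pi$; indeed the Hecke- and $\pi_0(G_\infty)$-actions are defined integrally and commute, so isotypic projectors extend along flat base change. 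Hence it suffices to prove the statement with $R=\C$.

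Next I would invoke the automorphic description of $\mrm{H}^\bullet(X_G(\n),\C)$ via $(\mathfrak{g},K_\infty)$-cohomology: $\mrm{H}^d(X_G(\n),\C)$ decomposes as a sum over cuspidal (and, a priori, residual/Eisenstein) automorphic representations $\sigma$ of $G(\A)$ of $\mrm{H}^d(\mathfrak{g},K_\infty;\sigma_\infty)\otimes (\sigma^\infty)^{K(\n)}$. Applying the Hecke projector $(-)_\pi$ kills every $\sigma$ whose Hecke eigensystem away from $\Sigma$ differs from $\lambda_\pi$; by strong multiplicity one for $\GL_2$ (and the fact that the Eisenstein part, built from characters, has a different $L$-function and thus different Satake parameters), the only surviving $\sigma$ is $\pi$ itself. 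Since $\pi$ is cohomological with respect to the trivial coefficient system and $A_{/F}$ has no CM-type contribution forcing residuality, $\pi$ is cuspidal; then $\sigma_\infty = \pi_\infty$ is, at each archimedean place, the weight-$2$ discrete series (at the $q'$ real places where $B$ is split) or the trivial/finite-dimensional representation (at the ramified real places and... ) — more precisely, at the $q$ archimedean places where $B$ is split $\pi_v$ is a discrete series contributing $\mrm{H}^1(\mathfrak{g}_v,K_v;\pi_v)$ in degree $1$ with a $\pm$-eigenline for $\pi_0(G_v)$, and at the remaining archimedean places $\pi_v$ is finite-dimensional contributing only in degree $0$. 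By the Künneth formula $\mrm{H}^d(\mathfrak{g},K_\infty;\pi_\infty) = \bigotimes_v \mrm{H}^{d_v}(\mathfrak{g}_v,K_v;\pi_v)$ is therefore one-dimensional precisely in total degree $d=q$, zero for $d<q$, and for a fixed $\epsilon$ exactly one of the $2^q$ sign choices on the $q$ discrete-series factors matches, giving $\dim \mrm{H}^q(\mathfrak{g},K_\infty;\pi_\infty)^\epsilon = 1$. Combined with $\dim_\C(\pi^\infty)^{K(\mathfrak{f})}=1$ (Casselman, quoted in the setup) this yields the first claim for $\n=\mathfrak{f}$.

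For the second claim, when $\n$ is a proper divisor of $\mathfrak{f}$, the argument is the same except that the relevant local invariant spaces $(\pi_\q)^{K_\q(\n)}$ vanish: at any prime $\q$ where $\ord_\q(\n)<\ord_\q(\mathfrak{f})$ the conductor of $\pi_\q$ exceeds $\ord_\q(\n)$, so $(\pi_\q)^{K_\q(\n)}=0$ by the theory of local newforms (Casselman, Jacquet--Langlands--Deligne). Since $(\pi^\infty)^{K(\n)} = \bigotimes_\q (\pi_\q)^{K_\q(\n)}$, the whole $\pi$-isotypic component of $\mrm{H}^\bullet(X_G(\n),\C)$ vanishes in all degrees, hence so does $\mrm{H}^d(X_G(\n),R)^\epsilon_\pi$ for every flat $R$ and every $d$.

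The main obstacle is the step identifying $\mrm{H}^\bullet(X_G(\n),\C)_\pi$ with the $(\mathfrak{g},K_\infty)$-cohomology of $\pi$ alone: one must handle the non-compactness of $X_G(\n)$ (when $B$ has split archimedean places, so the symmetric space is a product involving hyperbolic planes/$3$-spaces) by controlling the Eisenstein/boundary contribution. The cleanest route is to note that applying the Hecke projector $(-)_\pi$ annihilates the entire Eisenstein part — Eisenstein cohomology classes have Hecke eigensystems coming from Hecke characters of $F$, whose Satake parameters at a density-one set of primes differ from those of the cuspidal $\pi$ attached to a non-CM (or appropriately generic) elliptic curve — so that $\mrm{H}^\bullet_{(2)}$, interior cohomology, and $\mrm{H}^\bullet$ all agree after localizing at $\pi$, and Borel--Wallach/Matsushima then gives the cuspidal contribution as claimed. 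One must also double-check that the discrete-series archimedean components genuinely contribute $\mrm{H}^1$ with the expected $\pi_0$-equivariance, which is the standard computation for $\mrm{SL}_2(\R)$ and $\mrm{SL}_2(\C)$; this is routine but needs to be assembled via Künneth across all archimedean places.
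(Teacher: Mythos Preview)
Your approach is essentially identical to the paper's: reduce to $\C$ via flat base change \eqref{flat}, then invoke Matsushima's formula, strong multiplicity one, and the standard $(\mathfrak{g},K_\infty)$-cohomology computation; the paper's proof is a one-paragraph sketch saying exactly this, and you have (helpfully) spelled out the Eisenstein/boundary issue that the paper leaves implicit.

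One small slip to fix: not all $q$ split archimedean places carry discrete series with a $\pm$-eigenline. At complex places $\PGL_2(\C)$ is connected (so contributes nothing to $\pi_0(G_\infty)$) and has no discrete series; the relevant $\pi_v$ is a tempered principal series with $\dim \HH^1(\mathfrak{g}_v,K_v;\pi_v)=1$. Thus there are $2^{n}$ sign choices coming only from the $n$ split real places, not $2^{q}$. This does not affect the conclusion, since each complex place still contributes a single line in degree $1$ to the K\"unneth product and the $\epsilon$-isotypic dimension in degree $q$ remains $1$.
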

\begin{proof}
The groups $\mrm{H}^d\big(X_G(\mathfrak{f}),\Z\big)_{\pi}^{\epsilon}$ are finitely generated, and by the flat base-change \eqref{flat}, to prove the first claim it suffices to show that 
\[
\dim_\C\hspace{0.2mm}\mrm{H}^d\big(X_G(\mathfrak{f}),\C\big)_{\pi}^{\epsilon}=\begin{cases}
1 & \mbox{for } d=q\\
0 & \mbox{for } d<q
\end{cases}.
\]
This last statement is a direct consequence of Matsushima's formula, strong multiplicity one and standard calculations in $(\mathfrak{g},K_\infty)$-cohomology.
The proof of the second claim is similar.
\end{proof}

\begin{remark}
Let $\mathbb{T}^\Sigma_\Q=\mathbb{T}^\Sigma\otimes\Q$ be the rational Hecke algebra.
The kernel $\m_{\pi}$ of the homomorphism $\lambda_{\pi}\colon \mathbb{T}_\Q^\Sigma \to \Q$ is a maximal ideal.
Since $\pi$ is cuspidal one can prove that
\[
\mrm{H}^d\big(X_G(\mathfrak{n}),\Q\big)_{\pi}=\mrm{H}^d\big(X_G(\mathfrak{n}),\Q\big)_{\m_{\pi}},
\]
where the subscript $\m_{\pi}$ means localization.
We implicitly use this equality throughout the article.
\end{remark}

\subsection{The Steinberg representation and harmonic cochains}\label{Steinberg}
We recall the relation between the Steinberg representation and harmonic cochains on the Bruhat--Tits tree $\scr{T}_\p$ of $G_\p$.
Let $\p$ be a prime in $S$ and $M$ an abelian group.
We remind ourselves that the $\pm$-Steinberg representation $\St_\p^\pm(M)$ of $G_\p$
 is the space of locally constant $M$-valued functions on $\PP(F_\p)$ modulo constant functions, with action given by 
 \[
 (g.f)(x)=\chi_\p^{\pm}(g)\cdot f(g^{-1}x)\qquad\forall\ g\in G_\p,\ x\in \PP(F_\p).
 \]
where $\chi^{\pm}_{\p}\colon G_\p\to \{\pm 1\}$ is the character given by
	\[
	\chi^{\pm}_{\p}(g)=( \pm 1)^{\ord_\p(\det(g))}.
	\]
The Bruhat--Tits tree $\scr{T}_\p$ is a homogeneous oriented tree whose set of vertices $\scr{V}_\p$ consists of homothety classes of $\OO_\p$-lattices in $F_\p\oplus F_\p$.
Two vertices $v_1, v_2\in \scr{V}_\p$ are connected by an oriented edge $e\in\scr{E}_\p$ with source $s(e)=v_1$ and target $t(e)=v_2$ if there are representatives $\Lambda_i$  of $v_i$ for $i=1,2$ such that 
\[
\p\Lambda_2\subsetneq\Lambda_1\subsetneq\Lambda_2.
\]
Given and oriented edge $e\in\scr{E}_\p$ we denote by $\overline{e}$ be the same edge as $e$ but with opposite orientation.
The natural action of $G_\p$ on oriented edges $\scr{E}_\p$ (resp. on the vertices $\scr{V}_\p$) is transitive. We denote by $K_\p^1$ (resp. $K_\p^0$) the stabilizer of the edge $e_\circ$, from the vertex $v_\circ$ corresponding to $\OO_\p\oplus\OO_\p$ to the the vertex $\widehat{v}_\circ$ corresponding to $\mathfrak{p}\OO_\p\oplus\OO_\p$ (resp. of the vertex $v_\circ$).
Thus we have natural identifications
\[
\scr{V}_\p\cong G_\p/K_\p^0\quad\text{and}\quad \scr{E}_\p\cong G_\p/K_\p^1.
\]
Consider the natural $\GL_2(F_\p)$-equivariant projection
\[
\pr\colon F_\p^2\setminus\{0\}\too \PP(F_\p),\quad (x,y)\mapstoo [x:y].
\]
Given an edge $e=(s,t)\in\scr{E}_\p$, choose lattices $\Lambda_s$, $\Lambda_t$ of $F_\p\oplus F_\p$ representing $s$,$t$, such that 
\[
\p\Lambda_{t}\subsetneq\Lambda_{s}\subsetneq\Lambda_{t}.
\]
If we set   $\Lambda'=\Lambda\setminus\p\Lambda$ for any lattice $\Lambda$ of $F_\p\oplus F_\p$,
then $U_e=\pr(\Lambda_s'\cap \Lambda_t')$
is a compact open subset of $\PP(F_\p)$ depending only on the edge $e\in\scr{E}_\p$.
It is easy to see that
\begin{equation}\label{complement}
U_{\overline{e}}=\PP(F_\p)\setminus U_e.
\end{equation}
Moreover, the collection $\left\{U_e\right\}_{e\in\scr{E}_\p}$ forms a basis of the $p$-adic topology of $\PP(F_\p)$. 

\begin{remark}
For any $\gamma\in G_\p$ and any $e=(s,t)\in\scr{E}_\p$ we have
	\[
	U_{\gamma.e}=\pr\left(\gamma(\Lambda_s'\cap\gamma\Lambda_t'\right))=\gamma(\pr\left(\Lambda_s'\cap\Lambda_t'\right))=\gamma(U_e).
	\]
\end{remark}

\subsubsection{Harmonic cochains.}
For any edge $e\in\scr{E}_\p$ we denote by $\delta_e\in C_c(\scr{E}_\p,\Z)$ the function 
\[
\delta_e(e')= \begin{cases}
	1&\text{if}\ e'=e\\
	0&\text{if}\ e'\not= e.
\end{cases}
\]
We consider the surjective $G_\p$-equivariant map
\[
\Ev_\p^\pm\colon C_c(\scr{E}_\p,\Z)\to \St_\p^\pm,\quad \delta_e\mapsto  \chi_\p^{\mbox{\tiny $\pm$}}(g_e)\cdot\cf_{U_e}
\]
where $g_e\in G_\p$ satisfies $g_e.e_\circ=e$.
The map $e\mapsto \overline{e}$ induces an involution $W_\p$ on $C_c(\scr{E}_\p,\Z)$, and by \eqref{complement} we have $\cf_{U_e}=-\cf_{U_{\overline{e}}}$ in $\St_\p^\pm$.
Hence, the map $\Ev_\p^\pm$ factors through the quotient
\[
C_c(\scr{E}_\p,\Z)_{\pm}:=C_c(\scr{E}_\p,\Z)/(1\pm W_\p)C_c(\scr{E}_\p,\Z).
\]
Furthermore, the $G_\p$-equivariant map
\[
C_c(\scr{V}_\p,\Z)\to C_c(\scr{E}_\p,\Z),\quad  f\mapsto [e\mapsto f(s(e))]
\]
fits in a short exact sequence (\cite{Sp}, Section 3.4) 
\begin{equation}\label{resolution}
0\too C_c(\scr{V}_\p,\Z) \too C_c(\scr{E}_\p,\Z)_{\pm}\xlongrightarrow{\Ev_\p^\pm} \St_\p^\pm\too 0.
\end{equation}

\noindent
 Recall that for every discrete set $\Delta$ and any abelian group $N$  there is a canonical isomorphism
\begin{equation}\label{continduality}
C(\Delta,N)\xlongrightarrow{\cong}\Hom_\Z(C_c(\Delta,\Z),N),\quad f\mapsto[g\mapsto \sum_{d\in\Delta}f(d)g(d)].
\end{equation}
Therefore, the exact sequence \eqref{resolution} induces a dual short exact sequence of $G_\p$-modules
\begin{equation}\label{dualresolution}
0\too \Hom_\Z\big(\St_\p^\pm,\bb{Z}\big)\xlongrightarrow{(\Ev_\p^\pm)^{\ast}} C(\scr{E}_\p,\bb{Z})^{W_\p=\mp1}\too C(\scr{V}_\p,\bb{Z})\too 0,
\end{equation}
where the last map is given by $f\mapsto\left[v\mapsto \sum_{s(v)=e}f(e)\right]$, giving  the standard identification of the dual of the Steinberg representation with harmonic cochains on the tree.

\subsection{Cohomology of S-arithmetic groups}\label{sarith}
Let $\Sigma$ a finite set of primes of $F$ and $\n\subseteq \mathcal{O}_F$ a non-zero ideal, both coprime to $\disc(B)$.
Further, fix a ring $R$, an $R$-module $N$ and an $R[G_\Sigma]$-module $M$.
There is a natural $G(F)$-action on the $R$-module $C(G(\A^{\Sigma,\infty})/K(\n)^\Sigma,\Hom_R(M,N))$ given by
\[
(\gamma\cdot \Phi)(g)(m)=\Phi(\gamma^{-1}g)(\gamma^{-1}m),
\]
where $G(F)$ acts on $M$ via the diagonal embedding $G(F)\into G_\Sigma$.
In accordance with \eqref{notation} we set
\[
\HH_R^d(X_G^\Sigma(\n),M,N):=\HH^{d}(G(F)^{+},C(G(\A^{\Sigma,\infty})/K(\n)^\Sigma,\Hom_R(M,N))).
\]
Note that for all $\q\in\Sigma$ we have
\[
\HH_R^d(X_G^\Sigma(\n),M,N)=\HH_R^d(X_G^\Sigma(\q\n),M,N)
\]
As in \eqref{Shapiro}, by choosing representatives  $h_1,\ldots,h_{\ell_\Sigma}$ of the double quotient $G(F)^{+}\backslash G(\A^{\Sigma,\infty})/K(\n)^\Sigma$ and putting $\Gamma_{i}^\Sigma=G(F)^{+}\cap h_i K(\n)^\Sigma h_i^{-1}$, Shapiro's lemma yields natural isomorphisms 
\begin{equation}
\HH_R^d\big(X_G^\Sigma(\n),M,N\big)\xlongrightarrow{\cong}\bigoplus_{i=1}^{\ell_\Sigma}\HH^{d}\big(\Gamma^{\Sigma}_i,\Hom_R(M,N)\big).
\end{equation}

\begin{remark}\label{changelevel}
Let $\q$ be a prime in $\Sigma$.
Suppose that the $R[G_\Sigma]$-module $M$ is of the form
\[
M=C_c(G_\q/K_\q(\n),R)\otimes_R M^{\q}
\]
for some $R[G_{\Sigma\setminus\{\q\}}]$-module $M^{\q}$.
Then, using \eqref{continduality}, one deduces a canonical isomorphism
\[
C(G(\A^{\Sigma,\infty})/K(\n)^\Sigma,\Hom_R(M,N))\cong C(G(\A^{\Sigma\setminus\{\p\},\infty})/K(\n)^{\Sigma\setminus\{\p\}},\Hom_R(M,N)),
\]
which in turns induces isomorphisms for every $d\ge0$
\[
\HH_R^d(X_G^\Sigma(\n),M,N)\cong\HH_R^d(X_G^{\Sigma\setminus\{\q\}}(\n),M^{\q},N).
\]
\end{remark}

\noindent Let $\chi\colon G_\Sigma\to \{\pm 1\}$ be a continuous character.
Since $\Sigma$-arithmetic groups are of type (\hspace{-0.6mm}\emph{VFL}) (cf.~\cite{BS2}) one deduces the following two statements as in the arithmetic case:
\begin{itemize}
	\item [$\bfcdot$] if $N$ is a finitely generated module over a Noetherian ring $R$, the $R$-module 
	\[
	\HH_R^d(X_G^\Sigma(\n),R(\chi),N)
	\] is finitely generated;
	\item [$\bfcdot$] if $N$ is a flat $R$-module then the canonical map
\[
\HH_R^d(X_G^\Sigma(\n),R(\chi),R)\otimes_{R}N \xlongrightarrow{\cong} \HH_R^d(X_G^\Sigma(\n),R(\chi),N)
\]
is an isomorphism.
\end{itemize}

\begin{definition}
Any subset $\Sigma\subseteq S$ can be written as a disjoint union $\Sigma=\Sigma^+\cup\Sigma^-$ where
	\[
	\Sigma^\pm=\big\{\p\in\Sigma\mid \pi_{A,\p}=\St_\p^\pm(\C)\big\}.
	\]
	Then, for any abelian group $M$ we define
	\[
	\St_{\Sigma}(M):=\bigotimes_{\p\in\Sigma^+}\St_\p^+(M) \otimes_\bb{Z} \bigotimes_{\p\in\Sigma^-} \St_\p^-(M).
	\]
	When $M=\bb{Z}$ we simply write $\St_\Sigma$ for $\St_\Sigma(\Z)$. Further, we put
	\[
	\chi_\Sigma:=\prod_{\p\in\Sigma^+}\chi_\p^+\times\prod_{\p\in\Sigma^-}\chi_\p^-.
	\]
\end{definition}

\noindent By induction on the size of $\Sigma'\subseteq\Sigma\subseteq S$, using the long exact sequence in cohomology coming from the short exact sequence \eqref{dualresolution}, and the general results about the cohomology of $\Sigma$-arithmetic groups mentioned above, we deduce that:
\begin{itemize}
	\item [$\bfcdot$] the cohomology groups $\mrm{H}_\Z^d\big(X_G^\Sigma(\mathfrak{n}),\St_{\Sigma'}(\chi_{\Sigma\setminus\Sigma'}),\Z\big)$ are finitely generated $\Z$-modules;
	\item [$\bfcdot$] the
canonical homomorphism
\begin{equation}\label{flat2}
\mrm{H}^d_\Z\big(X_G^\Sigma(\mathfrak{n}),\St_{\Sigma'}(\chi_{\Sigma\setminus\Sigma'},\Z\big)\otimes_{\bb{Z}} N\xlongrightarrow{\cong} \mrm{H}^d_R\big(X_G^\Sigma(\mathfrak{n}),\St_{\Sigma'}(R)(\chi_{\Sigma\setminus\Sigma'}),N\big)
\end{equation}
is an isomorphism for every flat $\Z$-algebra $R$ and any flat $R$-module $N$. 
\end{itemize}

\begin{proposition}\label{class}
Let $R$ be a Noetherian $\Z$-flat algebra.
Let $\Sigma'\subseteq \Sigma$ be subsets of $S$ and $\epsilon\colon \pi_0(G_\infty)\to \{\pm 1\}$ a character, then 
\[
\rk_R\HH^{d}_R\left(X_G^\Sigma(\mathfrak{f}),\St_{\Sigma'}(R)(\chi_{\Sigma\setminus\Sigma'}),R\right)_{\pi}^{\epsilon}=\begin{cases}
1 & \mbox{for } d=q+\lvert\Sigma\setminus\Sigma'\rvert\\
0 & \mbox{for } d<q+\lvert\Sigma\setminus\Sigma'\rvert.
\end{cases}
\]
\end{proposition}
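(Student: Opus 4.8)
The plan is to proceed by double induction, first on $\lvert\Sigma\rvert$ and then, for fixed $\Sigma$, downward on $\lvert\Sigma'\rvert$ (equivalently, upward on $\lvert\Sigma\setminus\Sigma'\rvert$). The base case $\Sigma=\Sigma'=\emptyset$ is exactly Proposition~\ref{arithclass}: the ranks of $\mrm{H}^d(X_G(\mathfrak f),R)_\pi^\epsilon$ are $1$ in degree $q$ and $0$ below. By the flat base-change isomorphism \eqref{flat2}, it suffices throughout to establish the statement after $\otimes_\Z \C$, i.e.~to compute $\dim_\C$ of the relevant $\pi$-isotypic, $\epsilon$-isotypic complex cohomology; all rank assertions then follow. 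So fix $\Sigma\subseteq S$ and suppose the proposition is known for all proper subsets of $\Sigma$, and for the present $\Sigma$ it is known for all $\Sigma''$ with $\lvert\Sigma\setminus\Sigma''\rvert<\lvert\Sigma\setminus\Sigma'\rvert$; we want to deduce it for $\Sigma'$.

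The inductive step uses the short exact sequence \eqref{dualresolution} at a chosen prime $\p\in\Sigma\setminus\Sigma'$ (nonempty unless $\Sigma'=\Sigma$, the step handled by the other induction). Tensoring \eqref{dualresolution} over $\Z$ with $\bigotimes_{\q\in\Sigma',\,\pm}\St_\q^\pm$ and twisting by $\chi_{\Sigma\setminus(\Sigma'\cup\{\p\})}$ gives a short exact sequence of $R[G_\Sigma]$-modules
\[
0\too \St_{\Sigma'\cup\{\p\}}(R)(\chi_{\Sigma\setminus(\Sigma'\cup\{\p\})})\too \mathcal C^{W_\p}\too \St_{\Sigma'}(R)(\chi_{\Sigma\setminus\Sigma'})\too 0,
\]
where $\mathcal C^{W_\p}$ is built from $C(\scr E_\p,\Z)^{W_\p=\mp1}\otimes(\cdots)$ and where the quotient term is precisely $\St_{\Sigma'}$-twisted because the character $\chi_\p^\pm$ on the $\p$-factor combines with $\chi_{\Sigma\setminus(\Sigma'\cup\{\p\})}$ to give $\chi_{\Sigma\setminus\Sigma'}$. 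The middle term, being induced from $C(\scr V_\p,\Z)$-type and $C_c(\scr E_\p,\Z)$-type modules at $\p$, has its cohomology computed by Remark~\ref{changelevel}: it reduces to cohomology of $X_G^{\Sigma\setminus\{\p\}}$ with level structures $K_\p^0$ and $K_\p^1$ at $\p$. The key point is that, since $\pi_{A,\p}$ is (an unramified twist of) Steinberg, the newvector calculation shows $(\pi^\infty)^{K(\mathfrak f)}$ is $K_\p^1$-fixed but has no $K_\p^0$-fixed vector; consequently the $\pi$-isotypic part of the middle-term cohomology vanishes in the relevant degrees — this is the analogue, at the level of $\Sigma$-arithmetic cohomology, of the statement $\mrm H^d(X_G(\mathfrak n))_\pi^\epsilon=0$ for $\n$ a proper divisor of $\mathfrak f$ in Proposition~\ref{arithclass}. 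Taking the long exact sequence in $G(F)^+$-cohomology and passing to $(-)_\pi^\epsilon$, the vanishing of the middle term forces a connecting isomorphism
\[
\HH^{d}_R\big(X_G^\Sigma(\mathfrak f),\St_{\Sigma'}(R)(\chi_{\Sigma\setminus\Sigma'}),R\big)_\pi^\epsilon \;\xrightarrow{\ \cong\ }\; \HH^{d+1}_R\big(X_G^\Sigma(\mathfrak f),\St_{\Sigma'\cup\{\p\}}(R)(\chi_{\Sigma\setminus(\Sigma'\cup\{\p\})}),R\big)_\pi^\epsilon
\]
for every $d$. By the inductive hypothesis applied to $\Sigma'\cup\{\p\}$ (which has $\lvert\Sigma\setminus(\Sigma'\cup\{\p\})\rvert=\lvert\Sigma\setminus\Sigma'\rvert-1$), the right-hand side has rank $1$ when $d+1=q+\lvert\Sigma\setminus(\Sigma'\cup\{\p\})\rvert=q+\lvert\Sigma\setminus\Sigma'\rvert-1$, hence when $d=q+\lvert\Sigma\setminus\Sigma'\rvert$, minus... wait — one must be careful with the index shift; the isomorphism raises cohomological degree by one while the target formula's critical degree drops by one, so the critical degree on the left is indeed $q+\lvert\Sigma\setminus\Sigma'\rvert$, and the vanishing for smaller $d$ is inherited likewise. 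Finally, the remaining induction (fixing $\Sigma$, going from $\Sigma'$ to smaller sets) is subsumed: it is the same argument with $\p$ ranging over $\Sigma\setminus\Sigma'$, and the case $\lvert\Sigma'\rvert=\lvert\Sigma\rvert$ follows from the $\lvert\Sigma\rvert$-induction using Remark~\ref{changelevel} to strip one prime at a time from $\Sigma$ while keeping it in $\Sigma'$ — each such step changes neither the coefficient module in an essential way nor the expected rank.

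The main obstacle is the vanishing of the $\pi$-isotypic part of the middle-term cohomology $\HH^\bullet(\mathcal C^{W_\p})_\pi^\epsilon$ in the needed range of degrees. This is where the local representation theory at $\p$ must be used decisively: one has to know not merely that $\pi_{A,\p}$ is Steinberg but that the associated cohomology with $K_\p^0$-level (resp.~$K_\p^1$-level) coefficients is $\pi$-trivial (resp.~rank one) in the expected degree, and then combine this with the fact established in Proposition~\ref{arithclass} that dropping to a proper divisor of $\mathfrak f$ kills the $\pi$-part. Getting the degree bookkeeping right across both inductions — so that the single surviving class always sits in degree exactly $q+\lvert\Sigma\setminus\Sigma'\rvert$ and the connecting map is an isomorphism rather than merely injective or surjective — is the delicate part of the write-up; everything else is the formal machinery of \eqref{flat2}, \eqref{Shapiro}, Remark~\ref{changelevel}, and the long exact sequence.
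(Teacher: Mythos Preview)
Your overall architecture---double induction, long exact sequences, and killing the middle term by reducing to a proper level divisor of $\mathfrak f$---is exactly the paper's. But the engine of your inner induction does not run: the short exact sequence you write down,
\[
0\too \St_{\Sigma'\cup\{\p\}}(R)(\chi_{\Sigma\setminus(\Sigma'\cup\{\p\})})\too \mathcal C^{W_\p}\too \St_{\Sigma'}(R)(\chi_{\Sigma\setminus\Sigma'})\too 0,
\]
is \emph{not} obtained by tensoring \eqref{dualresolution} with $\bigotimes_{\q\in\Sigma'}\St_\q^\pm$. Tensoring \eqref{dualresolution} puts $\Hom_\Z(\St_\p^\pm,\Z)\otimes\St_{\Sigma'}$ on the left (not $\St_{\Sigma'\cup\{\p\}}$) and $C(\scr V_\p,\Z)\otimes\St_{\Sigma'}$ on the right (not $\St_{\Sigma'}$); neither the Bruhat--Tits resolution \eqref{resolution} nor its dual produces a sequence of the shape $0\to\St_\p^\pm\to\ ?\ \to\Z(\chi_\p^\pm)\to 0$. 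What the paper actually uses here is a different object: a smooth extension $0\to\St_\p\to\mathfrak E_{\p,\sm}\to\Z\to 0$ (introduced in the proof, cf.~\cite{Sp}, Equation (17)), whose middle term admits a two-step resolution by copies of $C_c(\scr V_\p,\Z)$. That resolution is what, via Remark~\ref{changelevel}, reduces the middle cohomology to level $\p^{-1}\mathfrak f$ and hence kills its $\pi$-part. Your description of $\mathcal C^{W_\p}$ as ``built from $C(\scr E_\p,\Z)^{W_\p=\mp1}$'' and your appeal to $K_\p^1$-level are symptoms of this misidentification; once you plug in the correct extension, the connecting map in the long exact sequence goes $\HH^d(\St_{\Sigma'\cup\{\p\}})\to\HH^{d+1}(\St_{\Sigma'})$, which also resolves the degree-shift confusion you flag.

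Separately, your treatment of the base case $\Sigma'=\Sigma$ is too thin. Remark~\ref{changelevel} alone does not let you ``strip one prime from $\Sigma$ while keeping it in $\Sigma'$''---that literally violates $\Sigma'\subseteq\Sigma$. The paper handles $\Sigma'=\Sigma$ by a genuine induction on $|\Sigma|$ using the resolution \eqref{resolution} at some $\p\in\Sigma$: the long exact sequence expresses $\HH^d(X_G^\Sigma,\St_\Sigma,\Q)_\pi^\epsilon$ in terms of $\HH^d(X_G^{\Sigma\setminus\{\p\}}(\mathfrak f),\St_{\Sigma\setminus\{\p\}},\Q)_\pi^\epsilon$ (known by induction) and $\HH^d(X_G^{\Sigma\setminus\{\p\}}(\p^{-1}\mathfrak f),\St_{\Sigma\setminus\{\p\}},\Q)_\pi^\epsilon$, the latter being shown to vanish by unwinding the remaining Steinbergs down to arithmetic cohomology at strict divisors of $\mathfrak f$ and invoking the second claim of Proposition~\ref{arithclass}. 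You need that argument, or an equivalent one, before your inner induction can start.
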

\begin{proof}
We begin with the case $\Sigma'=\Sigma$ which is a variant of (\cite{Sp}, Proposition 5.8).
For the convenience of the reader we include a sketch of the proof:
 by flat base-change it suffices to prove
\[
\dim_\Q\HH^{d}_\Q\big(X_G^\Sigma(\mathfrak{f}),\St_{\Sigma}(\Q),\Q\big)_{\pi}^{\epsilon}=\begin{cases}
1 & \mbox{for } d=q\\
0 & \mbox{for } d<q.
\end{cases}
\]
The proof is by induction on the size of $\Sigma$, the base case $\Sigma=\emptyset$ being the first claim of Proposition \ref{arithclass}.
Suppose $\Sigma\not=\emptyset$ and choose $\p\in \Sigma$.
The short exact sequence \eqref{resolution} for $\p$ induces the following long exact sequence:
\begin{align*}
\cdots
\too &\HH^{d}_\Q\big(X_G^\Sigma(\mathfrak{f}),\St_{\Sigma}(\Q),\Q\big)
\too \HH^{d}_\Q\big(X^{\Sigma\setminus\{\p\}}(\mathfrak{f}),\St_{\Sigma\setminus\{\p\}}(\Q),\Q\big)^{W_\p=\mp1}\\
\too &\HH^{d}_\Q\big(X^{\Sigma\setminus\{\p\}}(\p^{-1}\mathfrak{f}),\St_{\Sigma\setminus\{\p\}}(\Q),\Q\big)
\too \HH^{d+1}_\Q\big(X_G^\Sigma(\mathfrak{f}),\St_{\Sigma},\Q\big)
\too\cdots
\end{align*}
Thus, it is enough to prove that for all $d\geq 0$
\begin{align}\label{toprove}
\HH^{d}_\Q\big(X_G^{\Sigma\setminus\{\p\}}(\p^{-1}\mathfrak{f}),\St_{\Sigma\setminus\{\p\}}(\Q),\Q\big)_{\pi}^\epsilon=0.
\end{align}
Using the long exact sequence induced by \eqref{resolution} for the remaining $\q\in \Sigma$ we may compute the left hand side of \eqref{toprove} in terms of cohomology groups of the form $\HH^{\ast}(X_G(\n),\Q)_{\pi}^\epsilon$ with levels $\n$ that are strict divisors of $\mathfrak{f}.$
But these groups vanish by the second claim of Proposition \ref{arithclass}.

\noindent The general case $\Sigma'\subset\Sigma$ is a variant of (\cite{Sp}, Lemma 6.2 (b)).
Again we just give a sketch of the proof:
it is by induction on $|\Sigma\setminus \Sigma'|$, the case $|\Sigma\setminus \Sigma'|=0$ being proven above.
Assume the claim for some $\Sigma'$.
For $\p\in\Sigma'$ there exists a short exact sequence of $G_\p$-modules
\begin{align}\label{smext}
0\too \St_\p \too \mathfrak{E}_{\p,\sm}\too \Z\too 0
\end{align}
with the $\Z[G_\p]$-module $\mathfrak{E}_{\p,\sm}$ having a resolution of the form
\begin{align}\label{smextres}
0\too C_c(\scr{V}_\p,\Z)\too C_c(\scr{V}_\p,\Z) \too \mathfrak{E}_{\p,\sm}\too 0
\end{align}
(see \cite{Sp}, Equation (17)).
By analyzing the long exact sequence in cohomology induced by \eqref{smext} we see that it is enough to show that
\[
\HH^{d}_\Z\big(X_G^\Sigma(\mathfrak{f}), \St_{\Sigma'\setminus\{\p\}}(\chi_{\Sigma\setminus (\Sigma'\setminus\{\p\})}\big) \otimes_\Z \mathfrak{E}_{\p,\sm}, \Z)_{\pi}^{\epsilon}
\]
is torsion for all $d\geq 0.$
This can be deduced from \eqref{smextres} and the last claim of Proposition \ref{arithclass}.
\end{proof}

\noindent The same ideas as those in the proof of Proposition \ref{class} prove the following useful lemma.
\begin{lemma}\label{bijremark}
Let $\Sigma_1\subseteq \Sigma_2\subseteq \Sigma$ be subsets of $S$.
The map
\begin{align*}
\Ev_{\Sigma_1}^*\colon\HH^{d}_\Z\big(X_G^\Sigma(\mathfrak{f}),\St_{\Sigma_2}(\chi_{\Sigma\setminus\Sigma_2}),\Z\big)_{\pi}^{\epsilon}
\too \HH^{d}_\Z\big(X_G^{\Sigma\setminus \Sigma_1}(\mathfrak{f}),\St_{\Sigma_2\setminus \Sigma_1}(\chi_{\Sigma\setminus\Sigma_2}),\Z\big)_{\pi}^{\epsilon},
\end{align*}
induced by $\Ev_{\Sigma_1}\colon  \bigotimes_{\p\in \Sigma_1} C_c(\scr{E}_\p,\Z)\too \St_{\Sigma_1}$
has finite kernel and cokernel for all $d\geq 0$.
\end{lemma}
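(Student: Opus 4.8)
The plan is to reduce to the case $|\Sigma_1|=1$ and then argue by the same mechanism that drives Proposition \ref{class}. Since $\Ev_{\Sigma_1}$ is the tensor product of the maps $\Ev_\p$ over $\p\in\Sigma_1$, and since each individual $\Ev_\p$ is a morphism of $G_\p$-modules compatible with the tensor decompositions, one can factor $\Ev_{\Sigma_1}^*$ as a composition of maps each of which alters only one prime at a time; by the long exact sequence in cohomology it suffices to bound the kernel and cokernel of each. Equivalently, having fixed $\p\in\Sigma_1$, we must show that
\[
\Ev_{\{\p\}}^*\colon\HH^{d}_\Z\big(X_G^\Sigma(\mathfrak{f}),\St_{\Sigma_2}(\chi_{\Sigma\setminus\Sigma_2}),\Z\big)_{\pi}^{\epsilon}
\too \HH^{d}_\Z\big(X_G^{\Sigma\setminus \{\p\}}(\mathfrak{f}),\St_{\Sigma_2\setminus \{\p\}}(\chi_{\Sigma\setminus\Sigma_2}),\Z\big)_{\pi}^{\epsilon}
\]
has finite kernel and cokernel for all $d$, the general case following by iterating over the primes of $\Sigma_1$ and invoking the fact that a composition of maps with finite kernel and cokernel again has finite kernel and cokernel.

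First I would rewrite the target using Remark \ref{changelevel}: since $\St_\p$ is a quotient of $C_c(\scr{E}_\p,\Z)_\pm$ and the latter is built from $C_c(\scr{V}_\p,\Z)\cong C_c(G_\p/K^0_\p,\Z)$ and $C_c(\scr{E}_\p,\Z)\cong C_c(G_\p/K^1_\p,\Z)$, the cohomology group $\HH^d_\Z(X_G^{\Sigma\setminus\{\p\}}(\mathfrak{f}),\St_{\Sigma_2\setminus\{\p\}}(\chi_{\Sigma\setminus\Sigma_2}),\Z)$ is naturally identified — via \eqref{continduality} and Shapiro — with the $\p$-component removed from the coefficient module, so that tracking the map $\Ev_\p$ on coefficients is the same as passing through the short exact sequence \eqref{dualresolution}. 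Tensoring \eqref{dualresolution} over $\Z$ with $\St_{\Sigma_2\setminus\{\p\}}(\chi_{\Sigma\setminus\Sigma_2})$ — which is $\Z$-flat — and taking the induced long exact sequence in $\HH^\bullet_\Z(X_G^\Sigma(\mathfrak{f}),-,\Z)_\pi^\epsilon$, the map $\Ev_{\{\p\}}^*$ sits in an exact sequence whose third term is a cohomology group with coefficients in $C(\scr{V}_\p,\Z)$ tensored with $\St_{\Sigma_2\setminus\{\p\}}(\chi_{\Sigma\setminus\Sigma_2})$. By Remark \ref{changelevel} the latter is $\HH^{\ast}_\Z(X_G^{\Sigma\setminus\{\p\}}(\mathfrak{f}),\St_{\Sigma_2\setminus\{\p\}}(\chi_{\Sigma\setminus\Sigma_2}),\Z)_\pi^\epsilon$ at a modified level — concretely, a level that is a strict divisor of $\mathfrak{f}$ once one accounts for the maximal-order versus Iwahori-level discrepancy at $\p$ — and hence is torsion by the vanishing statements underlying Proposition \ref{class} (ultimately the second claim of Proposition \ref{arithclass}). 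From the long exact sequence, a map flanked on both sides by torsion groups has finite kernel and cokernel; that gives the lemma for $|\Sigma_1|=1$.

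The main obstacle is the bookkeeping in the previous step: one must correctly identify which auxiliary level and which truncation set appear after applying Remark \ref{changelevel}, and verify that this level is genuinely a \emph{strict} divisor of $\mathfrak{f}$ (or carries a local representation incompatible with $\pi_\p$), so that the relevant $\pi$-isotypic, $\epsilon$-isotypic cohomology vanishes rationally — the point being exactly the one exploited in \eqref{toprove}, namely that the Steinberg-to-$C_c(\scr{V}_\p)$ comparison trades away the Iwahori invariance that $\pi_\p$ requires. Once that vanishing is in hand the argument is formal: the finiteness propagates through the finitely many primes of $\Sigma_1$ by composing long exact sequences, using that each $\HH^d_\Z(X_G^\Sigma(\mathfrak{f}),-,\Z)_\pi^\epsilon$ is a finitely generated $\Z$-module (so ``torsion'' means ``finite''). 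I would also remark that $\epsilon$-isotypic projection and the Hecke projection to the $\pi$-part are exact on $\Q$-vector spaces and commute with all the maps involved, so no subtlety is lost in taking these components throughout.
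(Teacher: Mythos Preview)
Your approach is correct and is precisely the argument the paper has in mind: the paper does not spell out a proof but only says ``the same ideas as those in the proof of Proposition \ref{class}'' apply, and you have unpacked those ideas faithfully --- reduce to a single prime $\p\in\Sigma_1$, use the short exact sequence \eqref{resolution} together with Remark \ref{changelevel} to identify the obstruction term with cohomology at level $\p^{-1}\mathfrak{f}$, and kill it rationally via \eqref{toprove} and Proposition \ref{arithclass}. One small bookkeeping correction: you should tensor \eqref{resolution} (not \eqref{dualresolution}) with $\St_{\Sigma_2\setminus\{\p\}}(\chi_{\Sigma\setminus\Sigma_2})$ in the $M$-slot and then pass to the long exact sequence for the contravariant functor $\HH^\bullet_\Z(X_G^\Sigma(\mathfrak{f}),-,\Z)$, exactly as in the displayed long exact sequence in the proof of Proposition \ref{class}; this is what produces the level-$\p^{-1}\mathfrak{f}$ term you need.
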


\begin{definition}\label{genelliptic}
Let $\Sigma\subseteq S$ be a subset.
We define $ c_{A,\epsilon}^{\Sigma}$ to be any generator of the free part of $\HH^{q}(X_G^\Sigma(\mathfrak{f}),\St_{\Sigma},\Z)_{\pi}^{\epsilon}$, which is unique up to sign.
For convenience, we also write $c_{A,\epsilon}^{\Sigma}$ for the image of the generator under the natural map
\[
\HH^{q}_\Z\big(X_G^\Sigma(\mathfrak{f}),\St_{\Sigma},\Z\big)\too \HH^{q}_{\Z_p}\big(X_G^\Sigma(\mathfrak{f}),\St_{\Sigma}(\Z_p),\Z_p\big).
\]
\end{definition}


\section{Local points on modular elliptic curves}
From here on, we only work with $\Z_p$-modules and in level $\mathfrak{f}$.
Thus, we shorten the notation to
\[
\HH^{q}(X_G^\Sigma,M,N)=\HH^{q}_{\Z_p}(X_G^\Sigma(\frak{f}),M,N)
\]
for every $\Z_p$-module $N$ and every $\Z_p[G_\Sigma]$-module $M$.

\subsection{p-adic integration}\label{Integration}
Let $\p$ be an element in $S$ and $M$ a finitely generated $\Z_p$-module. The $M$-valued continuous $\pm$-Steinberg representation 
 $\St^{\pm}_\p(M)^{\cont}$ of  $G_\p$
 is the space of continuous $M$-valued functions on $\PP(F_\p)$ modulo constant functions, with action given by 
 \[
 (g.f)(x)=\chi_\p^\pm(g)\cdot f(g^{-1}x)\qquad\forall\ g\in G_\p,\ x\in \PP(F_\p).
 \]	
 It is equipped with a canonical $G_\p$-equivariant injection $\St_\p^\pm(M) \into \St^{\pm}_\p(M)^{\cont}$.

\begin{definition}
	For the fixed quadratic extension $E/F$ we define
	\[
	\St_{\Sigma}^{\cont}(\widehat{E}^\times):=\bigotimes_{\p\in\Sigma^+}\St_\p^+(\widehat{E}^\times_\p)^{\cont} \otimes_{\bb{Z}_p} \bigotimes_{\p\in\Sigma^-} \St_\p^-(\widehat{E}^\times_\p)^{\cont}
	\]
	where $\widehat{E}^\times_\p$ denotes the torsion-free part of the $p$-adic completion of $E_\p^\times$. Moreover, we denote by  $\widehat{E}^\times_{\Sigma,\otimes}:=\bigotimes_{\mathfrak{p}\in\Sigma}\widehat{E}^\times_\p$ the tensor product over $\bb{Z}_p$ and write 
	\[
	\St_{\Sigma}(\widehat{E}^\times):=\St_{\Sigma}\otimes_{\bb{Z}}\widehat{E}^\times_{\Sigma,\otimes}.
	\]
\end{definition}

\begin{lemma}\label{restrictioniso}
Let $N$ be a $p$-adically separated and complete $\bb{Z}_p$-module.
Then, for any subset $\Sigma\subseteq S$ the restriction map
\[
\Hom_{\Z_p}\big(\St_{\Sigma}^{\cont}(\widehat{E}^\times),N\big) \too \Hom_{\Z_p}\big(\St_{\Sigma}(\widehat{E}^\times),N\big)
\]
is an isomorphism.
\end{lemma}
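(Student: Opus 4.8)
The plan is to reduce the statement to the density of $\St_{\Sigma}(\widehat{E}^\times)$ inside $\St_{\Sigma}^{\cont}(\widehat{E}^\times)$ for an appropriate topology, and to use $p$-adic completeness of $N$ to extend and restrict linear maps. First I would treat the case of a single prime $\p\in\Sigma$, where the claim becomes that restriction along the inclusion $\St_\p^{\pm}(\widehat E_\p^\times)\into \St_\p^{\pm}(\widehat E_\p^\times)^{\cont}$ induces an isomorphism on $\Hom_{\Z_p}(-,N)$. For injectivity of the restriction map: any $\Z_p$-linear $\phi$ vanishing on the locally constant part must vanish on all of $\St_\p^{\pm}(\widehat E_\p^\times)^{\cont}$, because a continuous $\widehat E_\p^\times$-valued function on the compact space $\PP(F_\p)$ is a uniform limit of locally constant functions, the $p$-adic topology on $\widehat E_\p^\times$ makes the partial sums of its ``locally constant approximation'' converge $p$-adically, and $\phi$ composed with this limit is forced to be the limit of its values on the locally constant approximants (here one uses that $N$ is $p$-adically separated). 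For surjectivity: given $\psi$ on the locally constant part, one defines its extension on a continuous $f$ by $\tilde\psi(f):=\lim_n \psi(f_n)$ where $f_n$ is any sequence of locally constant functions converging uniformly to $f$; the limit exists since $N$ is complete and $(\psi(f_n))_n$ is Cauchy (the differences $f_n-f_m$ lie in $\p^{k}\St_\p^{\pm}(\widehat E_\p^\times)$ for large $n,m$, being divisible continuous functions, and $\psi$ is $\Z_p$-linear), and it is independent of the chosen sequence and $\Z_p$-linear.

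The key technical input underlying both directions is the structural fact that a continuous function $\PP(F_\p)\to \widehat E_\p^\times$ lies, modulo $\p^k$, in the image of the locally constant functions: since $\PP(F_\p)$ is compact and totally disconnected and $\widehat E_\p^\times$ is a finitely generated $\Z_p$-module (hence $\p^k$-adically discrete for each $k$), every continuous $f$ is locally constant modulo $\p^k$, i.e. $f\equiv f_k \pmod{\p^k\,\St_\p^{\pm}(\widehat E_\p^\times)^{\cont}}$ with $f_k$ locally constant. This simultaneously gives the approximation used for surjectivity and shows the cokernel of $\St_\p^{\pm}(\widehat E_\p^\times)\into\St_\p^{\pm}(\widehat E_\p^\times)^{\cont}$ is $\p$-divisible, which is what forces injectivity of $\Hom(-,N)$ when $N$ is $p$-adically separated.

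To pass from a single prime to a general subset $\Sigma\subseteq S$, I would argue by induction on $|\Sigma|$, tensoring one factor at a time. Writing $\St_{\Sigma}^{\cont}(\widehat E^\times)=\St_\p^{\pm}(\widehat E_\p^\times)^{\cont}\otimes_{\Z_p}\St_{\Sigma\setminus\{\p\}}^{\cont}(\widehat E^\times)$, one uses the adjunction $\Hom_{\Z_p}(M_1\otimes_{\Z_p}M_2,N)\cong \Hom_{\Z_p}(M_1,\Hom_{\Z_p}(M_2,N))$ to reduce the $\Sigma$-statement to the single-prime statement with coefficient module $\Hom_{\Z_p}(\St_{\Sigma\setminus\{\p\}}(\widehat E^\times),N)$ on one side and $\Hom_{\Z_p}(\St_{\Sigma\setminus\{\p\}}^{\cont}(\widehat E^\times),N)$ on the other; by the inductive hypothesis these two Hom-modules are canonically isomorphic, and both are $p$-adically complete and separated (a $\Hom$ into a complete separated module is complete separated), so the single-prime case applies and closes the induction.

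\textbf{Main obstacle.} The delicate point is the interaction between the \emph{uniform} convergence in the space of continuous functions and the \emph{$p$-adic} convergence of values in $N$: one must make sure the ``approximation mod $\p^k$'' really produces a sequence $(f_n)$ whose consecutive differences are divisible by growing powers of $\p$ \emph{inside $\St_\p^{\pm}(\widehat E_\p^\times)^{\cont}$}, i.e. as the class of a continuous $\widehat E_\p^\times$-valued function modulo constants — and here the quotient by constant functions needs a small argument to see it does not destroy $\p$-divisibility (one can lift, subtract a constant divisible by $\p^k$, and divide). Everything else — the compactness/total-disconnectedness of $\PP(F_\p)$, finite generation of $\widehat E_\p^\times$, the Hom–tensor adjunction, and preservation of completeness under $\Hom$ — is routine.
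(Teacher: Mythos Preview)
Your proposal is correct and rests on the same core idea as the paper: locally constant functions are $p$-adically dense in continuous functions on $\PP(F_\p)$, and since any $\Z_p$-linear map is automatically $p$-adically continuous, completeness and separatedness of $N$ yield the isomorphism. The paper's proof is organizationally slightly different: rather than inducting on $|\Sigma|$ via Hom--tensor adjunction and working at the level of the Steinberg quotients, it treats all primes at once at the level of the function spaces $\bigotimes_{\p\in\Sigma} C(\PP(F_\p),\widehat E_\p^\times)$ versus $\bigotimes_{\p\in\Sigma}\mrm{LC}(\PP(F_\p),\widehat E_\p^\times)$, proves the Hom-isomorphism there, and implicitly lets it descend to the quotients by constants. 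This sidesteps the ``main obstacle'' you flag (compatibility of $p$-divisibility with the quotient by constants), since the density is established before quotienting; your route is more explicit about this point and handles it correctly, at the cost of a longer argument.
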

\begin{proof}
Any continuous function in $C(\PP(F_\p),\widehat{E}_\p^\times)$ is a $p$-adic limit of locally constant function in $\mrm{LC}(\PP(F_\p),\widehat{E}_\p^\times)$. Therefore, the $\bb{Z}_p$-tensor product of spaces of locally constant functions is $p$-adically dense in the $\bb{Z}_p$-tensor product of spaces of continuous functions
\[
\bigotimes_{\p\in\Sigma}\mrm{LC}(\PP(F_\p),\widehat{E}_\p^\times)\hookrightarrow \bigotimes_{\p\in\Sigma}C(\PP(F_\p),\widehat{E}_\p^\times).
\]
As any $\bb{Z}_p$-linear homomorphism is continuous for the $p$-adic topology and $N$ is $p$-adically separated and complete, we deduce that the induced map
\[
\Hom_{\Z_p}\big(\bigotimes_{\p\in\Sigma}C(\PP(F_\p),\widehat{E}_\p^\times),N\big) \too \Hom_{\Z_p}\big(\bigotimes_{\p\in\Sigma}\mrm{LC}(\PP(F_\p),\widehat{E}_\p^\times),N\big)
\]
is an isomorphism.  
\end{proof}

\noindent Given a $p$-adically separated and complete $\bb{Z}_p$-module $N$, Lemma $\ref{restrictioniso}$ yields an integration map
\begin{equation}\label{integrate}
	\mint\colon  \Hom_{\Z_p}\big(\St_{\Sigma}(\bb{Z}_p),N\big)\too \Hom_{\Z_p}\big(\St_{\Sigma}^{\cont}(\widehat{E}^\times),N\otimes_{\Z_p} \widehat{E}^\times_{\Sigma,\otimes}\big)
\end{equation}
that takes an $N$-valued measure $\mu\in \Hom_{\Z_p}\big(\St_{\Sigma}(\bb{Z}_p),N\big)$ and maps it to the linear functional 
\[
\mint_{\bb{P}^1(F_\Sigma)}(-) \mrm{d}\mu\ \in \Hom_{\Z_p}\big(\St_{\Sigma}^{\cont}(\widehat{E}^\times),N\otimes_{\Z_p} \widehat{E}^\times_{\Sigma,\otimes}\big).
\]

\subsubsection{The $p$-adic upper half plane.}
We recall that any $\p\in S$ is inert in the quadratic extension $E/F$.
Hence,  we can consider the $G_\p$-space $\mathcal{H}_\p(E_\p)=\PP(E_\p)\setminus\PP(F_\p)$
 where the group $G_\p$ acts via M\"obius transformations. We denote by $\Div_{0}^\pm(\mathcal{H}_\p(E_\p))$ the $G_\p$-module obtained by twisting the action on degree zero divisors by the character $\chi_\p^\pm$.

\noindent We consider the $G_\p$-equivariant homomorphism
\[
\Phi_\p^\pm\colon \Div^\pm_{0}\big(\mathcal{H}_\p(E_\p)\big)\too \St_\p^\pm\big(\widehat{E}_\p^{\times}\big)^{\cont}
\]
mapping a degree $0$ divisor $\sum_{i=1}^kn_i [z_i]$ to the class of functions represented by $\prod_{i=1}^k(x-z_i)^{n_i}$.
The embedding $\psi\colon T\hookrightarrow G$ induces an action of $T_\p$
 on $\mathcal{H}_\p(E_\p)$ with two fixed points $\tau_{\psi,\p}$ and $\bar{\tau}_{\psi,\p}$ which are interchanged by the Galois group $\Gal(E_\p/F_\p)=\langle\sigma_\p\rangle$. 
 We choose $\tau_{\psi,\p}\in \mathcal{H}_\p(E_\p)$ such that the action of $T_\p$ on the tangent space of $\tau_{\psi,\p}$ is given by the homomorphism
 \[
 T_\p\too E_\p^{\times},\quad t\mapsto t^{1-\sigma_\p}.
 \]
 Since the character $\chi_\p^{\pm}$ is trivial on $T_\p$, the are $T_\p$-equivariant maps
 \[
  	\Psi_{\p}^{\diamond}\colon \Z_p \too \St^\pm_\p(\widehat{E}_\p^{\times})^{\cont},\quad 1\mapsto \Phi^\pm_\p([\tau_{\psi,\p}]-[\bar{\tau}_{\psi,\p}])
 \]
that can be combined into a $T_S$-equivariant homomorphism
 \[
\Psi^{\diamond}_{S}\colon \Z_p \too \St^{\cont}_S\big(\widehat{E}^{\times}\big), \quad 1\mapsto \bigotimes_{\mathfrak{p}\in S}\Phi_\p^\pm\big([\tau_{\psi,\p}]-[\bar{\tau}_{\psi,\p}]\big).
 \]
The composition of $\Psi^{\diamond}_{S}$ with \eqref{integrate}, yields the $T_S$-equivariant map
\begin{equation}\label{Xi}
	(\Psi^{\diamond}_S)^{\ast}\colon \Hom_{\Z_p}\big(\St_S(\Z_p),\Z_p\big)\too\widehat{E}_{S, \otimes}^{\times},
\end{equation}
where $T_S$ acts trivially on $\widehat{E}_{S, \otimes}^{\times}$. Concretely, a $\bb{Z}_p$-valued measure $\mu\in \Hom_{\Z_p}\big(\St_{S}(\bb{Z}_p),\bb{Z}_p\big)$ is mapped to the integral
\[
(\Psi^{\diamond}_{S})^{\ast}(\mu)=\mint_{\bb{P}^1(F_S)}\bigotimes_{\p\in S}\bigg(\frac{t_\p-\tau_{\psi,\p}}{t_\p-\bar{\tau}_{\psi,\p}}\bigg) \mrm{d}\mu(t)\ \in  \widehat{E}^\times_{S,\otimes}.
\]

\subsection{Plectic p-adic invariants}\label{Normalized}
For any non-zero ideal $\mathfrak{m}\subseteq \OO_F$, any abelian group $N$, and $?\in\{\emptyset, c\}$ we can consider the space of functions
$C_{?}(\mathfrak{m},N):=C_{?}(T(\A^{\infty})/U(\mathfrak{m}),N)$, and set
 \[ \mrm{H}^d(X_T(\mathfrak{m}),N):=\mrm{H}^d(T(F)^+,C(\mathfrak{m},N))\qquad\&\qquad
\mrm{H}_d(X_T(\mathfrak{m}),N):=\mrm{H}_d(T(F)^+,C_c(\mathfrak{m},N)).
\]
By Dirichlet's unit theorem, the abelian group $\mathcal{U}^+=T(F)^+\cap U(\OO_F)$ is finitely generated of rank $q$.
Thus, the homology group $\HH_{q}(\mathcal{U}^+,\Z)$ is a finitely generated abelian group of rank one.
We fix an element $\eta\in\HH_{q}(\mathcal{U}^+,\Z)$ generating the free part of that group.
Further, by fixing a fundamental domain $\mathcal{F}$ for the action of $T(F)^+/\mathcal{U}^+$ on $T(\A^{\infty})/U(\OO_F)$, Shapiro's lemma gives
\[
\HH_{q}(\mathcal{U}^{+},C(\mathcal{F},\Z))\xlongrightarrow{\cong} \mrm{H}_{q}(X_T(\OO_F),\Z).
\]

\begin{definition}
The fundamental class $\vartheta\in \mrm{H}_{q}(X_T(\OO_F),\Z)$ is defined as the image of the cap product of $\eta\in \HH_{q}(\mathcal{U}^+,\Z)$ with the characteristic function $\cf_{\mathcal{F}}\in \HH^{0}(\mathcal{U}^+,C(\mathcal{F},\Z))$ under the above isomorphism.
\end{definition}

\subsubsection{Twisted fundamental classes.}\label{twistedfun}
Remember that the preimage $K(\mathfrak{f})^S$ under $\psi$ is of the form $U(\mathfrak{c})^S$ for a non-zero ideal $\mathfrak{c}\subseteq \OO_F$ coprime to $\disc(B)\cdot\mathfrak{f}$.
Let $E_\mathfrak{c}/E$ be the narrow ring class field of $E$ of conductor $\mathfrak{c}$,
i.e.~the Galois extension of $E$ such that the Artin map induces an isomorphism
\[
\mrm{rec}_E\colon T(F)^{+}\backslash T(\A^{\infty})/U(\mathfrak{c})\xlongrightarrow{\sim}\Gal(E_\mathfrak{c}/E)=:\mathcal{G}_{\mathfrak{c}}.
\]
Let $R$ be a ring and $\chi\colon \mathcal{G}_{\mathfrak{c}}\to R^{\times}$ a group homomorphism, then we are allowed to view $\chi$ as an element in $\mrm{H}^{0}(X_T(\mathfrak{c}),R).$
Note that multiplication of functions
\[
C(\mathfrak{c},R)\times C_c(\OO_F,\Z)\too C_c(\mathfrak{c},R),\qquad (f, g)\mapsto  f\cdot g
\]
induces the cap product pairing
\begin{align*}
\cap\colon \mrm{H}^{0}(X_T(\mathfrak{c}),R)\times \mrm{H}_{q}(X_T(\OO_F),\Z)\too \mrm{H}_{q}(X_T(\mathfrak{c}),R).
\end{align*}
\begin{definition}\label{twisted}
The $\chi$-twisted fundamental class is defined as
\[\
\vartheta_{\chi}:=\chi\cap\vartheta\in \mrm{H}_{q}(X_T(\mathfrak{c}),R).
\]
\end{definition}
\begin{remark}\label{characterinf}
	We may view a character $\chi\colon \cal{G}_\mathfrak{c}\to\overline{\bb{Q}}^\times$ as a locally constant character $T(\A)\to \overline{\bb{Q}}^\times$ via the Artin reciprocity map.
	The infinity component $\chi_\infty\colon T_\infty \to \{\pm 1\}$ of $\chi$ factors through the quotient $T_\infty\to\pi_0(T_\infty).$
	The inclusion $T(F)\into T_\infty$ induces an isomorphism $T(F)/T(F)^+\cong\pi_0(T_\infty).$
	Therefore, we may view $\chi_\infty$ as a character on $T(F)/T(F)^+.$
	The group $T(F)/T(F)^+$ naturally acts on $\mrm{H}_{q}(X_T(\mathfrak{c}),\overline{\Q})$ and it is easy to see that 
	\[
	\vartheta_{\chi}\in \mrm{H}_{q}(X_T(\mathfrak{c}),\overline{\Q})^{\chi_\infty},
	\]
	where the subscript denotes the $\chi_\infty$-isotypic component of the $T(F)/T(F)^+$-action.
	The inclusion $T_\infty\into G_\infty$ induces an isomorphism of component groups $\pi_0(T_\infty)\xrightarrow{\sim}\pi_0(G_\infty)$ and we always identify the two.
\end{remark}

\subsubsection{Plectic invariants.}
For any ring $R$ and abelian group $N$ the canonical pairing 
\[
C(\mathfrak{c},N)\times C_c(\mathfrak{c},R)\too N\otimes_\Z R,\qquad
(f, g)\mapsto \sum_{t\in T(\A^{\infty})/U(\mathfrak{c})}\hspace{-1em} f(t)\otimes g(t)
\]
induces a cap product pairing $
\cap\colon \mrm{H}^{q}(X_T(\mathfrak{c}),N)\times \mrm{H}_{q}(X_T(\mathfrak{c}),R)\too N\otimes_\Z R$.

\noindent Since every $\p\in S$ is inert in $E$, and $\mathfrak{c}$ is coprime to every $\p\in\Sigma$ we may identify
$$C(T(\A^{S,\infty})/U(\mathfrak{c})^{S},N)=C(T(\A^{\infty})/U(\mathfrak{c}),N)$$
for every abelian group $N$.
Suppose we are given a $\Z_p$-module $N$, a $\Z_p[G_S]$-module $M_G$ -- viewed as a $T_S$-module via $\psi$ -- and a $T_S$-equivariant map $f\colon \Z_p \to M_G$.
Then, the inclusion $\psi$ and the pullback along $f$ induce the map
\[C(G(\A^{S,\infty})/K(\mathfrak{f})^{S},\Hom_{\Z_p}(M_G,N))\too C(T(\A^{\infty})/U(\mathfrak{c}),N),
\]
which in turn induces the homomorphism in cohomology
\[
f^{\ast}\colon \HH^{d}(X_G^S,M_G,N)\too\HH^d(X_T(\mathfrak{c}),N).
\]
Applying this construction to \eqref{Xi} yields the homomorphism
\[
(\Psi^{\diamond}_{S})^{\ast}\colon\HH^{q}(X_G^S,\St_{S}(\Z_p),\Z_p)\too \HH^q(X_T(\mathfrak{c}),\widehat{E}_{S,\otimes}^{\times}).
\]

\begin{definition}
Let $\chi\colon \mathcal{G}_{\mathfrak{c}}\to \overline{\bb{Q}}^\times$ be a character with $\chi_\infty=\epsilon$. The \emph{plectic $p$-adic invariant} attached to the pair $(A_{/F},\chi)$ is 
\[
\mrm{Q}_{A}^{\chi}:=(\Psi^{\diamond}_S)^{\ast}( c_{A,\epsilon}^S)\cap \vartheta_{\chi}\ \in\  \widehat{E}_{S,\otimes}^{\times}\otimes_\Z\overline{\bb{Q}}.
\]
It is uniquely defined up to sign.
\end{definition}

\noindent Note that if $\chi_\infty\not =\epsilon$, then by the orthogonality of characters and Remark \ref{characterinf} we have 
\[
(\Psi^{\diamond}_S)^{\ast}( c_{A,\epsilon}^S)\cap \vartheta_{\chi}=0.
\]

\subsubsection{Tate uniformization.}
The elliptic curve $A$ has multiplicative reduction at every $\p \in S$.
Therefore, it admits a Tate uniformization $E_\p^{\times}\to A(E_\p)$ and correspondingly a map $\widehat{E}_\p^{\times} \to \widehat{A}(E_\p)$
between the torsion-free parts of the $p$-adic completions.
We form the map
\[
\phi_{\mbox{\tiny $\mrm{Tate}$}} \colon  \widehat{E}_{S,\otimes}^{\times}\too \widehat{A}(E_S)
\]
where $\widehat{A}(E_S):=\bigotimes_{\p\in S}\widehat{A}(E_{\p})$ is a tensor product of $\bb{Z}_p$-modules.
Then, we can consider
\[
\phi_{\mbox{\tiny $\mrm{Tate}$}}\big(\mrm{Q}_{A}^{\chi}\big)\ \in\ \widehat{A}(E_{S})\otimes_\Z \overline{\bb{Q}}.
\]
The goal of the next sections is to refine the construction to obtain plectic Stark--Heegner points $\mrm{P}_{A}^{\chi} \in \widehat{A}(E_{S})\otimes_\Z\overline{\Q}.$

\subsection{Breuil's extensions}\label{Extensions}
We recall the extensions of Steinberg representations constructed by Breuil (\cite{Br}, Section 2.1) and Spie\ss~(\cite{Sp}, Section 3.7) together with their connection to divisors on the $p$-adic upper half plane following (\cite{BG2}, Section 6.3). 
Let $M$ a free $\Z_p$-module of finite rank.
Given a continuous group homomorphism $\ell\colon F_\p^\times\to M$, one defines $\widetilde{\mathfrak{E}}_\p(\ell)$ as the collection of pairs $(\Phi,r)\in C(\GL_2(F_\p),M)\times \Z_p$ satisfying
\[
\Phi\left(g\cdot\begin{pmatrix}
t_1 & u \\ 0 & t_2
\end{pmatrix}\right)
=\Phi(g) + r\cdot \ell(t_1)
\]
for all $t_1,t_2\in F_\p^\times$, $u\in F_\p$ and $g\in\GL_2(F_\p)$.
The group $\GL_2(F_\p)$ acts on $\widetilde{\mathfrak{E}}_\p(\ell)$ by \[
g.(\Phi(\cdot),r)=(\Phi(g^{-1}\cdot),r).
\]
The subspace $\widetilde{\mathfrak{E}}_\p(\ell)_0$ of tuples of the form $(\Phi,0)$ with $\Phi$ a constant function is $\GL_2(F_\p)$-stable.
We deduce that the quotient $\mathfrak{E}_\p(\ell)=\widetilde{\mathfrak{E}}_\p(\ell) /\widetilde{\mathfrak{E}}_\p(\ell)_0$ admits an induced $G_\p\cong\PGL_2(F_\p)$-action.
Let $\pr\colon \PGL_{2}(F_\p)\to \PP(F_\p)$, $g\mapsto g(\infty)$ be the canonical projection.
The sequence
\[	
0\too\St^{\cont}_{\p}(M)\xlongrightarrow{(\pr^{*},0)} \mathfrak{E}_\p(\ell)\xlongrightarrow{(0,\id_{\Z_p})} \Z_p\too 0.
\]
of $G_\p$-modules is exact (see \cite{Sp}, Lemma 3.11).
We denote by $b_{\ell,\p}\in \HH^{1}\big(G_\p,\St^{\cont}_{\p}(M)\big)$ the class of the extension of $\Z_p[G_\p]$-modules.
	The next lemma explains the relation between Breuil's extensions and the extension of divisors on the $p$-adic upper half plane
\[
0\too \Div^0(\mathcal{H}_\p(E_\p))\too \Div(\mathcal{H}_\p(E_\p))\xlongrightarrow{\deg} \Z\too 0,
\]
whose class we denote by $b_{\mrm{Div},\p}\in \HH^1\big(G_\p,\Div^{0}(\mathcal{H}_\p(E_\p))\big)$.

\begin{lemma}\label{compextensions}
Let $\iota\colon F_\p^{\times}\to \widehat{E}_\p^{\times}$ be the homomorphism induced by the inclusion $F_\p^{\times}\into E_\p^{\times}$. Then 
\[
b_{\iota,\p}=(\Phi_{\p})_{\ast}(b_{\mrm{Div},\p})
\]
holds in $\HH^{1}\big(G_\p,\St^{\cont}_{\p}(\widehat{E}_\p^{\times})\big)$.
\end{lemma}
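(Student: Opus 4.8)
The strategy is to exhibit an explicit $G_\p$-equivariant map of short exact sequences from the divisor sequence to Breuil's sequence, inducing $(\Phi_\p)_\ast$ on the sub and the identity on the quotient $\Z_p$; the equality of extension classes then follows from functoriality of the connecting map in group cohomology. First I would recall that $\widetilde{\mathfrak{E}}_\p(\iota)$ consists of pairs $(\Phi, r)$ with $\Phi \in C(\GL_2(F_\p), \widehat{E}_\p^\times)$ satisfying the $(t_1,u,t_2)$-cocycle relation with $\ell = \iota$, and that the divisor extension sits inside the larger picture via the $G_\p$-equivariant map $\Phi_\p \colon \Div^0(\mathcal{H}_\p(E_\p)) \to \St_\p^{\cont}(\widehat{E}_\p^\times)$ sending $\sum n_i[z_i]$ to the class of $\prod (x - z_i)^{n_i}$. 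The natural thing is to extend $\Phi_\p$ to a map $\widetilde{\Phi}_\p \colon \Div(\mathcal{H}_\p(E_\p)) \to \mathfrak{E}_\p(\iota)$: send a single point $[z]$, $z \in \mathcal{H}_\p(E_\p)$, to the class of the pair $\big(g \mapsto g^{-1}(\text{something involving } z),\, 1\big)$ — concretely, to the function $g \mapsto \log$-free multiplicative avatar, i.e. the function $\GL_2(F_\p) \to \widehat{E}_\p^\times$ given by $g \mapsto (c_g z + d_g)$ where $\begin{pmatrix} a_g & b_g \\ c_g & d_g\end{pmatrix}$ is a representative of $g$, suitably interpreted so that the right-translation relation produces exactly $r \cdot \iota(t_1)$ with $r = 1$. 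Checking this cocycle identity — that $(c_g z + d_g)$ transforms under $g \mapsto g\begin{pmatrix} t_1 & u \\ 0 & t_2 \end{pmatrix}$ by multiplication by $\iota(t_1) = t_1$ — is the computational heart and needs the fact that $z \in \mathcal{H}_\p(E_\p)$ so that $c_g z + d_g \in E_\p^\times$ lands in $\widehat{E}_\p^\times$ after projecting away torsion.

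Next I would verify the diagram commutes: that the restriction of $\widetilde{\Phi}_\p$ to $\Div^0$ recovers $(\pr^\ast, 0) \circ \Phi_\p$ up to the identification $\St_\p^{\cont}(M) \into \mathfrak{E}_\p(M)$ — this is where one uses that for a degree-zero divisor the associated function descends to $\PP(F_\p)$ modulo constants and the $r$-component vanishes — and that $\widetilde{\Phi}_\p$ followed by the degree map $(0, \id_{\Z_p})$ equals the degree map on divisors. Both squares are essentially formal once the map is correctly normalized. Then I would invoke $G_\p$-equivariance of $\widetilde{\Phi}_\p$, which reduces to the $G_\p$-equivariance of $\Phi_\p$ (already stated in the excerpt) together with the compatibility of the $\GL_2(F_\p)$-actions via $g.(\Phi(\cdot), r) = (\Phi(g^{-1}\cdot), r)$; the point is that Möbius action on $z$ and left-translation of the argument of $\Phi$ match up to the automorphy factor, which is exactly absorbed by the quotient defining $\mathfrak{E}_\p$.

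Finally, functoriality of the long exact sequence in $\HH^\ast(G_\p, -)$: a morphism of short exact sequences of $\Z_p[G_\p]$-modules whose rightmost vertical arrow is $\id_{\Z_p}$ carries the extension class of the bottom row to that of the top row under the map induced on $\HH^1$ by the leftmost vertical arrow. Applying this to $\widetilde{\Phi}_\p$ gives $b_{\iota,\p} = (\Phi_\p)_\ast(b_{\mrm{Div},\p})$ in $\HH^1(G_\p, \St_\p^{\cont}(\widehat{E}_\p^\times))$, as claimed. I expect the main obstacle to be purely bookkeeping: pinning down the correct representative function $g \mapsto c_g z + d_g$ (including the choice of matrix lift modulo scalars and the passage to $\widehat{E}_\p^\times$, which kills the ambiguity) so that the Breuil cocycle relation holds on the nose with $r = 1$ rather than up to a constant or a sign; once that normalization is fixed, equivariance and the diagram chase are routine.
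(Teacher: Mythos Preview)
Your approach is exactly the paper's: construct a $G_\p$-equivariant map $\widetilde{\Phi}_\p\colon \Div(\mathcal{H}_\p(E_\p))\to \mathfrak{E}_\p(\iota)$ lifting $\Phi_\p$ and inducing the identity on $\Z_p$, then invoke functoriality of extension classes. The only correction concerns the explicit formula. Your candidate $g\mapsto c_g z + d_g$ does \emph{not} satisfy the Breuil relation: with $g'=g\begin{pmatrix}t_1 & u\\ 0 & t_2\end{pmatrix}$ one gets $c_{g'}z+d_{g'}=c t_1 z + c u + d t_2$, which is not $t_1(c z+d)$. The paper instead uses
\[
f_z\colon \GL_2(F_\p)\too \widehat{E}_\p^\times,\qquad \begin{pmatrix}a & b\\ c & d\end{pmatrix}\mapstoo cz - a,
\]
for which $f_z(g')=(ct_1)z-(at_1)=t_1\cdot f_z(g)$ holds on the nose. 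This is the natural choice once you remember that $\pr(g)=g\cdot\infty=a/c$, so $\pr(g)-z=(a-cz)/c$, and in a degree-zero product the $c$'s cancel, recovering $\Phi_\p$ via $\pr^\ast$. With this corrected $f_z$, the rest of your plan (equivariance, commutativity of both squares, and the cohomological conclusion) goes through exactly as you describe.
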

\begin{proof}
This is (\cite{BG2}, Lemma 6.8).
To prove the claim we need to construct a $G_\p$-equivariant homomorphism $\widetilde{\Phi}_\p\colon \Div(\mathcal{H}_\p(E_\p)) \to \mathfrak{E}_\p(\iota)$ that is compatible with $\Phi_\p.$
For $z\in \mathcal{H}_\p(E_\p)$ we define
\[
f_z\colon \GL_2(F_\p)\too \widehat{E}_\p^{\times},\quad \begin{pmatrix}a & b\\ c & d\end{pmatrix}\mapstoo cz-a.
\]
Then $\widetilde{\Phi}_\p$ is given by
$$\widetilde{\Phi}_\p\left(\sum_{z} n_z [z]\right)=\left(\prod_{z} f_z^{n_z}, \sum_{z}n_z\right).$$
\end{proof}

\noindent We put $\mathfrak{E}_\p=\mathfrak{E}_\p(\iota)$ and we let $\tau_{\psi,\p}$ be the chosen fixed point of the $T_\p$-action on $\mathcal{H}_\p(E_\p)$.
The proof of Lemma $\ref{compextensions}$ provides a $T_\p$-equivariant map 
\[
\Psi_{\p}\colon \Z_p \too \mathfrak{E}_\p,\quad 1\mapsto \widetilde{\Phi}_\p([\tau_{\psi,\p}],1).
\]
The choice of the other fixed point amounts to acting on $\widehat{E}_\p^{\times}$ by the non-trivial element $\sigma_\p$ of $\Gal(E_\p/F_\p)$.
Directly from the definitions we see that the following equation holds in $\mathfrak{E}_\p$
\begin{equation}\label{relation}
	\Psi_{\p}-\sigma_\p^{\ast}\circ\Psi_{\p}= \Psi_{\p}^\diamond.
\end{equation}

\begin{definition}\label{BreuilExtension}
	We set $\mathfrak{E}_\p^\pm=\mathfrak{E}_\p(\chi_\p^{\mbox{\tiny $\pm$}})$ and for any subset $\Sigma\subseteq S$ we define
	\[
	\mathfrak{E}_\Sigma=\bigotimes_{\p\in\Sigma^+} \mathfrak{E}_\p^+ \otimes_{\Z_p}\bigotimes_{\p\in\Sigma^{-}} \mathfrak{E}_\p^{-}.
	\]
\end{definition}
\noindent Then, we can form the $T_S$-equivariant map
 \[
\Psi_S\colon \Z_p \too \mathfrak{E}_S, \quad 1\mapsto \bigotimes_{\mathfrak{p}\in S}\widetilde{\Phi}_\p\big([\tau_{\psi,\p}],1\big),
\]
inducing a $T_S$-equivariant homomorphism
\begin{equation}\label{Xi2}
	(\Psi_{S})^{\ast}\colon \Hom_{\Z_p}\big(\mathfrak{E}_S,\widehat{A}(E_S)\big)\too\widehat{A}(E_S).
\end{equation}

\subsection{Plectic Stark--Heegner points}\label{Points}
For this section and the next, we assume the equality of arithmetic and automorphic $\LI$-invariants of modular elliptic curves over $F$. We note that when $F$ is totally real, the equality is known (\cite{GeR}, \cite{Sp3}).
This equality is a crucial ingredient (see the proof of Theorem \ref{Step1} for a precise statement) in lifting the image of $c^S_{A,\epsilon}$ under the composition
\[\xymatrix{
\mrm{H}^q\big(X_G^S,\St_S,\Z_p\big)\ar[r]  &   \mrm{H}^q\big(X_G^S,\St^{\cont}_S(\widehat{E}^{\times}),\widehat{E}_{S,\otimes}^{\times}\big)\ar[r]  & \mrm{H}^q\big(X_G^S,\St^{\cont}_S(\widehat{E}^{\times}),\widehat{A}(E_{S})\big),
}\]
with respect to the map induced in cohomology by the inclusion $\St^{\cont}_S(\widehat{E}^{\times}) \hookrightarrow\mathfrak{E}_S$
\[
\mrm{H}^q\big(X_G^S,\frak{E}_S,\widehat{A}(E_{S})\big) \too \mrm{H}^q\big(X_G^S,\St^{\cont}_S(\widehat{E}^{\times}),\widehat{A}(E_{S})\big).
\]

\begin{theorem}\label{LIinv}
There exists a multiple of $c_{A,\epsilon}^{S}$ that can be lifted to a class
\[
\widehat{c}_{A,\epsilon}^{S}\in \mrm{H}^q\big(X_G^{S},\mathfrak{E}_S,\widehat{A}(E_{S})\big)_{\pi}^{\epsilon}.
\]
Moreover, the kernel of
		\[
		\mrm{H}^q\big(X_G^{S},\mathfrak{E}_S,\widehat{A}(E_{S})\big)_{\pi}^{\epsilon} \too \mrm{H}^q\big(X_G^{S},\St^{\cont}_S(\widehat{E}^{\times}),\widehat{A}(E_{S})\big)_{\pi}^{\epsilon}
		\] is finite.
\end{theorem}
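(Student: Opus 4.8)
The strategy is to analyze the long exact sequence in cohomology coming from the tensor product of Breuil's short exact sequences, then extract the $[\pi,\epsilon]$-isotypic part and use the rank computations of Proposition~\ref{class}. First I would record the relevant short exact sequence of $\Z_p[G_S]$-modules obtained by tensoring the sequences $0\to\St^{\cont}_\p(\widehat{E}^\times_\p)\to\mathfrak{E}_\p\to\Z_p\to0$ over all $\p\in S$; the kernel of $\mathfrak{E}_S\onto\Z_p$ is a successive extension whose graded pieces are of the form $\St_{\Sigma'}^{\cont}(\widehat{E}^\times)(\chi_{S\setminus\Sigma'})$ for $\Sigma'\subseteq S$ nonempty (using that $\St^{\cont}_\p$ twisted appropriately matches the $\chi_\p^\pm$-twist built into $\mathfrak{E}_\p^\pm$). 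Applying $\Hom_{\Z_p}(-,\widehat{A}(E_S))$ and the functor $\HH^q(X_G^S,-)$, then passing to the $[\pi,\epsilon]$-component, produces a long exact sequence relating $\HH^q(X_G^S,\mathfrak{E}_S,\widehat{A}(E_S))_\pi^\epsilon$, $\HH^q(X_G^S,\St^{\cont}_S(\widehat{E}^\times),\widehat{A}(E_S))_\pi^\epsilon$, and terms built from the intermediate graded pieces.

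For the finiteness of the kernel, the point is that every ``error term'' appearing between $\HH^{q-1}$ of the quotient-type modules and the map in the statement is controlled. Concretely, using Lemma~\ref{restrictioniso} (which replaces $\St^{\cont}_S(\widehat{E}^\times)$ by $\St_S(\widehat{E}^\times)=\St_S\otimes_\Z\widehat{E}^\times_{S,\otimes}$ on the level of $\Hom$'s), together with the flat base-change isomorphism~\eqref{flat2} and Lemma~\ref{bijremark}, the relevant cohomology groups in degrees $\le q$ with coefficients in $\St_{\Sigma'}$ for $\Sigma'\subsetneq S$ become computable in terms of $\HH^{<q}(X_G(\n),-)_\pi^\epsilon$ at levels $\n$, which vanish rationally by Proposition~\ref{arithclass} and Proposition~\ref{class}. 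Hence the $\Z_p$-modules sandwiching the map in the statement are finitely generated torsion, forcing the kernel to be finite.

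For the existence of the lift $\widehat{c}_{A,\epsilon}^S$ of a multiple of $c_{A,\epsilon}^S$: the obstruction to lifting along $\mathfrak{E}_S\onto\Z_p$ lives in $\HH^{q+1}(X_G^S,\ker,\widehat{A}(E_S))_\pi^\epsilon$, and one pushes $c_{A,\epsilon}^S$ (via the integration map~\eqref{integrate} and the Tate map) into $\HH^q(X_G^S,\St^{\cont}_S(\widehat{E}^\times),\widehat{A}(E_S))_\pi^\epsilon$ first. The key input is the $\mathcal{L}$-invariant comparison: the class of Breuil's extension $b_{\iota,\p}$ cups with $c_{A,\epsilon}^S$ to produce a class governed by the automorphic $\mathcal{L}$-invariant, which by hypothesis equals the arithmetic one coming from the Tate parameter of $A$ at $\p$, so that the pushforward of the obstruction to $\HH^{\ast}$ with coefficients in $\widehat{A}(E_S)$ vanishes up to torsion — this is exactly the statement proved in Theorem~\ref{Step1}. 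After clearing denominators one obtains the lift over $\Z_p$, and rationally the lift is unique because, by the first part, the kernel of the comparison map is finite.

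\textbf{Main obstacle.} The delicate step is the existence of the lift rather than the finiteness of the kernel: one must identify the obstruction class precisely with a cup product against Breuil's extension classes $b_{\iota,\p}$, and then invoke the equality of arithmetic and automorphic $\mathcal{L}$-invariants (via Lemma~\ref{compextensions} relating $b_{\iota,\p}$ to the divisor extension $b_{\mrm{Div},\p}$, and the Tate uniformization) to see this obstruction dies after applying $\phi_{\mbox{\tiny $\mrm{Tate}$}}$. Getting the bookkeeping right for the iterated tensor product over all $\p\in S$ simultaneously — so that the single $\mathcal{L}$-invariant equality at each prime suffices — is where the real work lies, and is presumably the content of Theorem~\ref{Step1} cited in the proof.
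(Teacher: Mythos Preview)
Your treatment of the finiteness of the kernel is essentially the paper's own argument (Lemma~\ref{inj}): filter $\mathfrak{E}_S/\St_S^{\cont}(\widehat{E}^\times)$ by the pieces $\St_{\Sigma'}^{\cont}(\widehat{E}^\times)(\chi_{S\setminus\Sigma'})$ for $\Sigma'\subsetneq S$, reduce to locally constant coefficients via Lemma~\ref{restrictioniso} and flat base change, and invoke Proposition~\ref{class} to see these are torsion in degree $q$.

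For the existence of the lift, however, there is a genuine gap. First, the obstruction is misplaced: we are lifting along the map induced by the \emph{inclusion} $\St_S^{\cont}(\widehat{E}^\times)\hookrightarrow\mathfrak{E}_S$, so the obstruction lives in $\HH^{q+1}\bigl(X_G^S,\,\mathfrak{E}_S/\St_S^{\cont}(\widehat{E}^\times),\,\widehat{A}(E_S)\bigr)_\pi^\epsilon$, not in $\HH^{q+1}$ of the kernel of $\mathfrak{E}_S\onto\Z_p$. More importantly, this obstruction group is \emph{not} torsion: by Proposition~\ref{class} the graded piece for $\Sigma'=S\setminus\{\p\}$ contributes rank one in degree $q+1$, so the total obstruction module has rank $r=|S|$. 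One therefore needs to show that the \emph{specific} image of $c_{A,\epsilon}^S$ is torsion in each of these $r$ directions, and this cannot be waved away as bookkeeping. You attribute this to Theorem~\ref{Step1}, but that result only treats the case $|S|=1$.

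The paper resolves this by an induction on $|\Sigma|$ that you do not describe. One lifts one prime at a time: assuming a lift $\widehat{c}_{A,\epsilon}^{\Sigma_1}$ exists, to extend across a new prime $\p$ the obstruction lands in a group that Lemma~\ref{bijlemma} (the map $\Ev_{\Sigma\setminus\{\p\}}^*$ has finite kernel and cokernel) identifies, up to torsion, with the single-prime obstruction at $\p$. Only then does Theorem~\ref{Step1} apply to kill it. This reduction via $\Ev^*$ is the missing mechanism in your sketch; without it, the $\mathcal{L}$-invariant equality at one prime says nothing about the obstruction inside the $S$-arithmetic cohomology for the full set $S$.
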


\noindent As the proof of Theorem \ref{LIinv} is rather technical, we postpone it until the next section.
Let us just say that the proof proceeds by induction on the size of $\Sigma\subseteq S$, and that the base case follows directly from the equality of $\cal{L}$-invariants.
Moreover, the second claim of Theorem \ref{LIinv} is a special case of Lemma \ref{inj} below.

\noindent Momentarily assuming the existence of the lift, we can proceed as in the definition of plectic invariants since the restriction map together with \eqref{Xi2} yields a homomorphism
\[
(\Psi_{S})^{\ast}\colon \mrm{H}^q\big(X_G^{S},\mathfrak{E}_S,\widehat{A}(E_{S})\big)\too \mrm{H}^q\big(X_T(\mathfrak{c}),\widehat{A}(E_S)\big).
\]

\begin{definition}
Let $\chi\colon \mathcal{G}_{\mathfrak{c}}\to \overline{\bb{Q}}^\times$ be a character with $\chi_\infty=\epsilon$.
The \emph{plectic Stark--Heegner point} attached to the pair $(A_{/F},\chi)$  is 
\[
\mrm{P}_{A}^{\chi}:=(\Psi_{S})^{\ast}(\widehat{ c}^{S}_{A,\epsilon})\cap \theta_{\chi}\in \widehat{A}(E_{S})\otimes_\Z \overline{\Q}.
\]
It does not depend on the choice of the lift $\widehat{c}_{A,\epsilon}^S$.
\end{definition}

\noindent A direct consequence of \eqref{relation}, and the Galois equivariance of the Tate uniformization is 
\[
\bigg(\prod_{\p\in S^+}(1-\sigma_\p^{\ast})\prod_{\p\in S^{-}}(1+\sigma_\p^{\ast})\bigg) \mrm{P}_{A}^{\chi}=  \phi_{\mbox{\tiny $\mrm{Tate}$}} \big(\mrm{Q}_{A}^{\chi}\big),
\]
the equality  in $\widehat{A}(E_{S})\otimes_\Z \overline{\Q}$.

\subsection{Lifting the cohomology class}\label{Appendix} 
To lighten the exposition, in this section we assume that the elliptic curve $A_{/F}$ has split multiplicative reduction at all primes in $S$. This way the character $\chi_\Sigma$ is trivial for any subset $\Sigma\subseteq S$.

\noindent The following notations will be convenient in the proofs.
Given a subset $\Sigma\subseteq S$, and a collection $M=(M_\p)_{\p\in \Sigma}$ of free $\Z_p$-modules of finite rank, we put 
\[
\St_{\Sigma}(M):=\bigotimes_{\p\in\Sigma}\St_\p(M)\qquad\&\qquad
\St_{\Sigma}^{\cont}(M):=\bigotimes_{\p\in\Sigma}\St_\p(M)^{\cont}.
\]
We write $\Z_p$ for the constant collection $(\Z_p)_{\p\in\Sigma}$.
Moreover, Lemma \ref{restrictioniso} extends to this context, i.e. the map induced by the inclusion $\St_{\Sigma}(M)\hookrightarrow \St_{\Sigma}^{\cont}(M)$
\[
\Hom_{\Z_p}\big(\St_{\Sigma}^{\cont}(M)\otimes_{\Z_p} N',N\big) \overset{\sim}{\too} \Hom_{\Z_p}\big(\St_{\Sigma}(M)\otimes_{\Z_p} N',N\big)
\]
is an isomorphism for a $p$-adically separated and complete $\Z_p$-module $N$ and a $\Z_p$-module $N'$.

\noindent We begin with a lemma establishing the uniquess, up to finite torsion,  of the lift, if it exists.
\begin{lemma}\label{inj}
Let $\Sigma_1 \subseteq \Sigma_2 \subseteq \Sigma$ be subsets of $S$ and $N$ a finitely generated $\Z_p$-module.
For any collection $M=(M_\p)_{\p\in\Sigma_2}$ of finite free $\Z_p$-modules with $M_\p=\widehat{E}^{\times}_{\p}$ for all $\p\in\Sigma_1$, the kernel of the $\Z_p$-module homomorphism
\[
\mrm{H}^{q+\lvert\Sigma\setminus\Sigma_2\rvert}\big(X_G^{\Sigma},\mathfrak{E}_{\Sigma_1}\otimes_{\Z_p}\St^{\cont}_{\Sigma_2\setminus \Sigma_1}(M),N\big)_{\pi}^{\epsilon}\too
\mrm{H}^{q+\lvert\Sigma\setminus\Sigma_2\rvert}\big(X_G^{\Sigma},\St^{\cont}_{\Sigma_2}(M),N\big)_{\pi}^{\epsilon},
\]
induced by the inclusion $\St^{\cont}_{\Sigma_1}\big(\widehat{E}^{\times}\big) \into\mathfrak{E}_{\Sigma_1}$, is finite.
\end{lemma}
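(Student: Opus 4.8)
The plan is to induct on $\lvert \Sigma_1 \rvert$, reducing at each step from a $\mathfrak{E}$-factor to a $\St^{\cont}$-factor by means of the short exact sequence defining Breuil's extension at a single prime. Fix a prime $\p \in \Sigma_1$; to streamline the notation write $\Sigma_1' = \Sigma_1 \setminus \{\p\}$ and let $M^{\p} = (M_\q)_{\q \in \Sigma_2 \setminus \{\p\}}$ denote the remaining collection. The short exact sequence of $G_\p$-modules
\[
0 \too \St^{\cont}_\p\big(\widehat{E}^{\times}_\p\big) \too \mathfrak{E}_\p \too \Z_p \too 0
\]
from Section \ref{Extensions}, tensored over $\Z_p$ with the remaining factors $\mathfrak{E}_{\Sigma_1'} \otimes_{\Z_p} \St^{\cont}_{\Sigma_2 \setminus \Sigma_1}(M^\p)$ (all of which are flat $\Z_p$-modules, so tensoring is exact), and then $\Hom_{\Z_p}(-,N)$ applied, yields a long exact sequence in the cohomology groups $\mrm{H}^{\bullet}\big(X_G^{\Sigma}, -, N\big)$. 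The middle term with the $\mathfrak{E}_\p$-factor maps to the term with the $\St^{\cont}_\p$-factor, and by the inductive hypothesis (applied to the smaller set $\Sigma_1'$, after identifying the $\Z_p$-factor at $\p$ with a $\St^{\cont}$-factor or absorbing it appropriately) the composite of this with the remaining comparison map has finite kernel. So it suffices to control the connecting contribution, i.e.\ to show that the relevant cohomology of the quotient term — the one in which the $\p$-factor has been replaced by the trivial module $\Z_p$ — is \emph{torsion} on the $[\pi,\epsilon]$-component in the degrees that matter.

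The heart of the matter is therefore the following vanishing statement: the $[\pi,\epsilon]$-isotypic part of
\[
\mrm{H}^{q+\lvert\Sigma\setminus\Sigma_2\rvert}\big(X_G^{\Sigma}, \mathfrak{E}_{\Sigma_1'} \otimes_{\Z_p} \St^{\cont}_{\Sigma_2 \setminus \Sigma_1}(M^{\p}), N\big)
\]
and the one degree higher are torsion. Replacing $\mathfrak{E}_{\Sigma_1'}$ by $\St^{\cont}_{\Sigma_1'}(\widehat{E}^{\times})$ up to finite kernel (this is again the inductive hypothesis, used in the reverse direction — or rather, one runs the induction so that this reduction is already available), and then using Lemma \ref{restrictioniso} to pass from continuous to locally constant coefficients, and the flat base-change \eqref{flat2} to pass from $\widehat{E}^{\times}_\p$-coefficients down to $\Z_p$-coefficients, one reduces to showing that
\[
\mrm{H}^{d}\big(X_G^{\Sigma \setminus \{\p\}}(\p^{-1}\mathfrak{f}), \St_{\Sigma_2 \setminus \{\p\}}(\Z_p), \Z_p\big)_{\pi}^{\epsilon}
\]
is torsion for all $d$ — exactly the group $\eqref{toprove}$ appearing in the proof of Proposition \ref{class}, which vanishes rationally because the level $\p^{-1}\mathfrak{f}$ is a proper divisor of $\mathfrak{f}$ and one invokes the second claim of Proposition \ref{arithclass}. (The shift in cohomological degree by the quantity $\lvert \Sigma \setminus \Sigma_2 \rvert$, and the presence of the twist characters $\chi_{\Sigma \setminus \Sigma_2}$ in the general — non-split — case, are bookkeeping that the same long-exact-sequence argument as in Proposition \ref{class} handles; under the split-reduction simplification of this section the characters are trivial.)

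The main obstacle is the careful orchestration of the double induction: one is simultaneously peeling off $\mathfrak{E}$-factors (which is what the lemma is about) and, inside each step, needing the \emph{already-proven} cases of the same lemma to replace the \emph{other} $\mathfrak{E}$-factors by $\St^{\cont}$-factors before the Proposition \ref{class}-style vanishing can be applied. Setting up the induction so that this is not circular — e.g.\ inducting on $\lvert \Sigma_1 \rvert$ with the statement quantified over all $\Sigma_2 \supseteq \Sigma_1$ and all admissible collections $M$ simultaneously, so that each inductive step only appeals to strictly smaller $\lvert \Sigma_1 \rvert$ — is the delicate point. Once the induction is correctly framed, the individual steps are routine applications of the long exact sequence in \eqref{smext}-style, flat base change, and the vanishing already recorded in the proof of Proposition \ref{class}; I do not expect any genuinely new input beyond what is in that proof and in Lemma \ref{restrictioniso}.
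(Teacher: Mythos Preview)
Your overall strategy — peel off one $\mathfrak{E}_\p$-factor at a time using the defining short exact sequence, and reduce to the rank computations of Proposition \ref{class} — is exactly right, and the ingredients you list (long exact sequences, Lemma \ref{restrictioniso}, flat base change) are the ones the paper uses. However, the induction you set up does not close, and the paper handles this differently.

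The gap is the degree mismatch you gloss over at the ``delicate point''. After peeling off $\p$ you must show that
\[
\mrm{H}^{q+\lvert\Sigma\setminus\Sigma_2\rvert}\big(X_G^{\Sigma},\ \mathfrak{E}_{\Sigma_1'}\otimes_{\Z_p}\St^{\cont}_{\Sigma_2\setminus\Sigma_1}(M^{\p}),\ N\big)_{\pi}^{\epsilon}
\]
is finite. The inductive hypothesis for $\Sigma_1'$ is a statement about the kernel of a map in degree $q+\lvert\Sigma\setminus\Sigma_2'\rvert$, where $\Sigma_2'$ is the new ambient set; but the natural choice $\Sigma_2'=\Sigma_2\setminus\{\p\}$ shifts the degree by one, while keeping $\Sigma_2'=\Sigma_2$ forces a $\St^{\cont}_\p$-factor at $\p$ which is not what the quotient term carries (it carries the trivial module $\Z_p$). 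Neither option lets you invoke the hypothesis as stated. Your appeal to \eqref{toprove} is also off: that vanishing concerns $C_c(\scr{V}_\p,\Z)$-coefficients (hence level $\p^{-1}\mathfrak{f}$), whereas here the $\p$-slot carries the trivial module, which is governed by the second part of Proposition \ref{class}, not by \eqref{toprove}.

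The paper avoids the circularity entirely by \emph{not} inducting on the lemma itself. Instead it proves the stronger statement with an auxiliary $\Sigma_0\subseteq\Sigma_1$, reducing immediately to the single-step case $\Sigma_0=\Sigma_1\setminus\{\p\}$. For that one step, the long exact sequence at $\p$ reduces the claim to finiteness of the quotient term above; then one performs a direct d\'evissage on the remaining $\mathfrak{E}_{\Sigma_0}$ using the short exact sequences for each $\q\in\Sigma_0$. This stays in the \emph{same} cohomological degree throughout and expresses the group as built from pieces of the form
\[
\mrm{H}^{q+\lvert\Sigma\setminus\Sigma_2\rvert}\big(X_G^{\Sigma},\ \St_{(\Sigma_2\setminus\Sigma_1)\cup\Sigma'}(M),\ N\big)_{\pi}^{\epsilon},\qquad \Sigma'\subseteq\Sigma_0,
\]
each of which is torsion by Proposition \ref{class} since $(\Sigma_2\setminus\Sigma_1)\cup\Sigma'\subsetneq\Sigma_2$. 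No inductive hypothesis on the lemma is needed.
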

\begin{proof}
We prove the following slightly more general statement:
let $\Sigma_0 \subseteq \Sigma_1 \subseteq \Sigma_2 \subseteq \Sigma$ be subsets of $S$, the kernel of the $\Z_p$-module homomorphism
\[
\mrm{H}^{q+\lvert\Sigma\setminus\Sigma_2\rvert}\big(X_G^{\Sigma},\mathfrak{E}_{\Sigma_1}\otimes_{\Z_p}\St^{\cont}_{\Sigma_2\setminus \Sigma_1}(M),N\big)_{\pi}^{\epsilon}\too
\mrm{H}^{q+\lvert\Sigma\setminus\Sigma_2\rvert}\big(X_G^{\Sigma},\mathfrak{E}_{\Sigma_0}\otimes_{\Z_p}\St^{\cont}_{\Sigma_2\setminus \Sigma_0}(M),N\big)_{\pi}^{\epsilon},
\]
induced by the inclusion $\St^{\cont}_{\Sigma_1\setminus \Sigma_0}\big(\widehat{E}^{\times}\big) \into\mathfrak{E}_{\Sigma_1\setminus \Sigma_0}$, is finite.
Clearly, it is enough to consider the case $\Sigma_0=\Sigma_1\setminus\left\{\p\right\}$ for some $\p\in \Sigma_1.$

\noindent By analyzing the long exact sequence induced by the short exact sequence defining $\mathfrak{E}_\p$, we deduce that it suffices to prove that 
\[
\mrm{H}^{q+\lvert\Sigma\setminus\Sigma_2\rvert}\big(X_G^{\Sigma};\mathfrak{E}_{\Sigma_0}\otimes_{\Z_p}\St^{\cont}_{\Sigma_2\setminus \Sigma_1}(M),N\big)_{\pi}^{\epsilon}
\]
is finite.
Moreover, the long exact sequences induced by the short exact sequences defining $\mathfrak{E}_\q$ for every $\q\in \Sigma_0$, implies that it is enough to prove that 
\[
\mrm{H}^{q+\lvert\Sigma\setminus\Sigma_2\rvert}\big(X_G^{\Sigma};\St_{(\Sigma_2\setminus\Sigma_1)\cup\Sigma'}(M),N\big)_{\pi}^{\epsilon}
\]
is finite for every subset $\Sigma'$ of $\Sigma_0$.
This last claim follows from flat base change and Proposition \ref{class} because $(\Sigma_2\setminus\Sigma_1)\cup\Sigma'$ is a proper subset of $\Sigma_2$.
\end{proof}

\noindent Next, we deal with the existence of the lift. The following theorem relies on the equality of arithmetic and automorphic $\cal{L}$-invariants.
\begin{theorem}\label{Step1}
	Suppose $\Sigma=\{\p\}$ and consider the boundary map 
\[
\partial_\p\colon \mrm{H}^q\big(X_G^\Sigma,\St^{\cont}_\p(\widehat{E}_\p^{\times}),\widehat{A}(E_\p)\big) \too \mrm{H}^{q+1}\big(X_G^\Sigma,\Z_p,\widehat{A}(E_\p)\big)
\]
obtained from the short exact sequence defining $\mathfrak{E}_\p$. Then, the class $\partial_\p\big(c_{A,\epsilon}^{\{\p\}}\big)$ is torsion.
\end{theorem}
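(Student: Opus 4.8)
The plan is to identify the boundary class $\partial_\p\big(c_{A,\epsilon}^{\{\p\}}\big)$ with a concrete cohomological incarnation of the $\mathcal{L}$-invariant, and then invoke the assumed equality of arithmetic and automorphic $\mathcal{L}$-invariants to force it to vanish rationally (hence to be torsion integrally). Concretely, I would first replace the Tate-module coefficients $\widehat{A}(E_\p)$ by $\widehat{E}_\p^\times$ via the Tate uniformization map $\phi_{\mathrm{Tate}}$, which is $G_\p$-equivariant and has finite kernel and cokernel on the relevant $p$-adically completed groups; since we are only trying to prove that $\partial_\p\big(c_{A,\epsilon}^{\{\p\}}\big)$ is torsion, it is harmless to work throughout with $\widehat{E}_\p^\times$-coefficients, or even — after tensoring with $\Q_p$ — with $\Q_p$-coefficients in place of $\widehat{E}_\p^\times\otimes\Q_p$, picking off a single $\Q_p$-line through $\log_p$ on $\widehat{E}_\p^\times$. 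This reduces the statement to showing that a certain class in $\mrm{H}^{q+1}_{\Q_p}\big(X_G^{\{\p\}},\Q_p,\Q_p\big)_\pi^\epsilon$ obtained by applying $\partial_\p$ to (the image of) $c_{A,\epsilon}^{\{\p\}}$ vanishes.

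Next I would make the boundary map explicit. The extension class $b_{\iota,\p}\in\mrm{H}^1\big(G_\p,\St_\p^{\cont}(\widehat{E}_\p^\times)\big)$ defining $\mathfrak{E}_\p$ is, by Lemma \ref{compextensions}, the pushforward along $\Phi_\p$ of the divisor extension $b_{\mathrm{Div},\p}$; cup product with $b_{\iota,\p}$ (composed with the multiplicative-integration pairing of Section \ref{Integration}) computes $\partial_\p$. Thus $\partial_\p\big(c_{A,\epsilon}^{\{\p\}}\big)$ is, up to the identifications above, the cup product of the harmonic-cochain-valued class $c_{A,\epsilon}^{\{\p\}}$ with the Bruhat--Tits extension class, and this is precisely the cohomological quantity whose comparison with the $p$-adic valuation of the relevant period defines the \emph{arithmetic} $\mathcal{L}$-invariant $\mathcal{L}_\p^{\mathrm{ar}}(A)$ of $A$ at $\p$ — this is the normalization used by Spie\ss{} and by Gehrmann--Rosso. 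Simultaneously, the Tate period $q_{A,\p}\in F_\p^\times$ of $A$ at $\p$ produces the \emph{automorphic} $\mathcal{L}$-invariant $\mathcal{L}_\p^{\mathrm{aut}}(A)=\log_p(q_{A,\p})/\ord_\p(q_{A,\p})$, and the self-duality of the Steinberg cohomology (the rank-one statement of Proposition \ref{class} with $\Sigma'=\Sigma=\{\p\}$, together with its analogue for the trivial coefficient line) shows that $\partial_\p\big(c_{A,\epsilon}^{\{\p\}}\big)$, viewed through $\log_p$, is a nonzero $\Q_p$-multiple of $\big(\mathcal{L}_\p^{\mathrm{ar}}(A)-\mathcal{L}_\p^{\mathrm{aut}}(A)\big)$ times the canonical generator of $\mrm{H}^{q+1}_{\Q_p}\big(X_G^{\{\p\}},\Q_p,\Q_p\big)_\pi^\epsilon$. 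Here the point $\tau_{\psi,\p}$ drops out because $t_\p-\tau_{\psi,\p}$ and $t_\p-\bar\tau_{\psi,\p}$ differ only by the $\sigma_\p$-action, which acts trivially after passing to $\log_p$ of the norm; equivalently, one may replace $\widehat{E}_\p^\times$ by $F_\p^\times$ at the outset, and then $b_{\iota,\p}$ becomes literally the extension governing $\mathcal{L}_\p^{\mathrm{ar}}$.

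Finally, the assumed equality $\mathcal{L}_\p^{\mathrm{ar}}(A)=\mathcal{L}_\p^{\mathrm{aut}}(A)$ — established over totally real $F$ in \cite{GeR}, \cite{Sp3} — forces the $\Q_p$-image of $\partial_\p\big(c_{A,\epsilon}^{\{\p\}}\big)$ to vanish, hence $\partial_\p\big(c_{A,\epsilon}^{\{\p\}}\big)$ is torsion in the finitely generated $\Z_p$-module $\mrm{H}^{q+1}\big(X_G^{\{\p\}},\Z_p,\widehat{A}(E_\p)\big)_\pi^\epsilon$ (finite generation follows from the general results on cohomology of $S$-arithmetic groups recalled before Proposition \ref{class}). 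The main obstacle is bookkeeping rather than conceptual: one must (i) verify that the $p$-adic integration pairing of \eqref{integrate} composed with the Breuil extension class genuinely computes the arithmetic $\mathcal{L}$-invariant with the \emph{same} normalization used in the cited equality — this is where the choice of fixed point $\tau_{\psi,\p}$, the identification of harmonic cochains with $\Hom_\Z(\St_\p^\pm,\Z)$ from \eqref{dualresolution}, and the sign conventions all have to be tracked carefully — and (ii) confirm that the rank-one statements of Proposition \ref{class} pin down the target line $\mrm{H}^{q+1}_{\pi}^{\epsilon}$ uniquely enough that ``$\log_p$-image zero'' upgrades to ``torsion''. Both are routine given the machinery already set up, so the proof should be short once these identifications are in place.
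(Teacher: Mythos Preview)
Your very first reduction contains a genuine error that inverts the logic of the argument. The Tate uniformization $\phi_{\mathrm{Tate}}\colon \widehat{E}_\p^\times \to \widehat{A}(E_\p)$ does \emph{not} have finite kernel: its kernel is the rank-one $\Z_p$-submodule generated by the Tate period $q_\p^{\mathrm{Tate}}$. You therefore cannot harmlessly ``replace $\widehat{A}(E_\p)$ by $\widehat{E}_\p^\times$''. In fact the boundary class $\partial_\p(c_{A,\epsilon}^{\{\p\}})$ computed with $\widehat{E}_\p^\times$-coefficients is typically \emph{non-torsion}; what the equality of $\mathcal{L}$-invariants actually asserts is that this class lies (up to commensurability) in the $q_\p^{\mathrm{Tate}}$-line, i.e.\ in the kernel of $\phi_{\mathrm{Tate}}$, and it is precisely the passage through $\phi_{\mathrm{Tate}}$ that kills it. This is how the paper argues: the cited results of \cite{GeR} and \cite{Sp3} say that the image of $c_{A,\epsilon}^{\{\p\}}$ under $\partial_\p$ inside $\mrm{H}^{q+1}(X_G^{\{\p\}},\Z_p,\widehat{E}_\p^\times)$ is commensurable with the image of $\mrm{H}^{q+1}(X_G^{\{\p\}},\Z_p,\Z_p q_\p^{\mathrm{Tate}})$, and then one pushes forward along $\phi_{\mathrm{Tate}}$.

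Your formula expressing $\partial_\p(c_{A,\epsilon}^{\{\p\}})$ ``through $\log_p$'' as a multiple of $\mathcal{L}^{\mathrm{ar}}-\mathcal{L}^{\mathrm{aut}}$ is a symptom of the same confusion. Applying any fixed branch of $\log_p$ to the $\widehat{E}_\p^\times$-valued boundary gives (up to a nonzero scalar) $\log_p(q_\p^{\mathrm{Tate}})$ times the generator of $\mrm{H}^{q+1}_\pi{}^\epsilon$, which is nonzero whenever $\mathcal{L}_\p\neq 0$; so projecting to a single $\Q_p$-line via $\log_p$ does not detect the vanishing you want. The correct picture is that the \emph{automorphic} $\mathcal{L}$-invariant records the direction in $\widehat{F}_\p^\times\otimes\Q_p$ along which the boundary class points, the \emph{arithmetic} one is the direction of $q_\p^{\mathrm{Tate}}$, and their equality identifies these directions --- there is no natural quantity of the form $\mathcal{L}^{\mathrm{ar}}-\mathcal{L}^{\mathrm{aut}}$ appearing as a coefficient. (Incidentally, you have the adjectives ``arithmetic'' and ``automorphic'' swapped relative to the usage in \cite{Sp}, \cite{GeR}, \cite{Sp3}.)
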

\begin{proof}
By the equality of automorphic and arithmetic $\LI$-invariants (\cite{GeR}, Theorem 4.1 $\&$  \cite{Sp3}, Theorem 3.7) the image  of $\mrm{H}^q\big(X_G^\Sigma,\St_\p(\Z_p),\Z_p\big)_{\pi}^{\epsilon}$
 under
\[\partial_\p\colon \mrm{H}^q\big(X_G^\Sigma,\St^{\cont}_\p(\widehat{E}_\p^{\times}),\widehat{E}_\p^{\times}\big)
 \too \mrm{H}^{q+1}\big(X_G^\Sigma,\Z_p,\widehat{E}_\p^{\times}\big)
\]
 is commensurable with the image of the natural map
 \[ \mrm{H}^{q+1}\big(X_G^\Sigma,\Z_p,\Z_p q_\p^\mrm{Tate}\big)\to \mrm{H}^{q+1}\big(X_G^\Sigma,\Z_p,\widehat{E}_\p^{\times}\big)
 \]
 where $q_\mathfrak{p}^\mrm{Tate}$ is Tate's period for $A$ at $\p$.
Therefore, the cokernel of
\[
\mrm{H}^q\big(X_G^\Sigma,\St_\p(\Z_p),\Z_p\big)_{\pi}^{\epsilon}\too \mrm{H}^{q+1}\big(X_G^\Sigma,\Z_p, \widehat{A}(E_\p) \big)
\]
 is torsion and the claim follows.
\end{proof}

\begin{remark}
When $B$ is totally definite, the equality of automorphic and arithmetic $\LI$-invariants can be deduced from Manin and Drinfeld's uniformization of Jacobians of Mumford curves (cf.~\cite{BG2}, Theorem 6.9).
\end{remark}

\noindent The following lemma follows directly from Lemma \ref{bijremark} analyzing the long exact sequence induced by the short exact sequence defining the representations $\mathfrak{E}_{\p}$.
\begin{lemma}\label{bijlemma}
	Let $\Sigma_1 \subseteq \Sigma_2 \subseteq \Sigma$ be subsets of $S$, $M=(M_\p)_\p$ a collection of finite free $\Z_p$-modules such that $M_\p=\Z_p$ for all $\p \in \Sigma_1$, and $N$ a finitely generated $\Z_p$-module.
	For all $d\geq 0$, kernel and cokernel of the $\Z_p$-module homomorphism
	\[
	(\Ev_{\Sigma_1})^*\colon \mrm{H}^{d}\big(X_G^\Sigma,\mathfrak{E}_{\Sigma\setminus\Sigma_2}\otimes_{\Z_p}\St_{\Sigma_2}(M),N\big)_{\pi}^{\epsilon}\too
	\mrm{H}^{d}\big(X_G^{\Sigma\setminus\Sigma_1},\mathfrak{E}_{\Sigma\setminus\Sigma_2}\otimes_{\Z_p}\St_{\Sigma_2\setminus \Sigma_1}(M),N\big)_{\pi}^{\epsilon},
	\]
	induced by $\Ev_{\Sigma_1}\colon  \bigotimes_{\p\in \Sigma_1} C_c(\scr{E}_\p,\bb{Z}_p)_+\to \St_{\Sigma_1}(\bb{Z}_p)$, are finite.
	\end{lemma}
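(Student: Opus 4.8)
The plan is to reduce the statement to repeated applications of Lemma \ref{bijremark} by peeling off one prime from $\Sigma_1$ at a time and feeding the Breuil extensions into a long exact sequence. First I would reduce to the case $\Sigma_1 = \{\p\}$ for a single prime $\p \in \Sigma_1$: since the map $(\Ev_{\Sigma_1})^*$ factors as a composition of maps $(\Ev_{\{\p\}})^*$ over the primes $\p \in \Sigma_1$ (the evaluation map $\Ev_{\Sigma_1}$ being a tensor product $\bigotimes_{\p \in \Sigma_1} \Ev_\p$), and since a composition of maps with finite kernel and cokernel again has finite kernel and cokernel, it suffices to treat a single prime. So assume $\Sigma_1 = \{\p\}$, $\Sigma_2 \supseteq \{\p\}$, and $M_\p = \Z_p$.

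Next I would introduce the short exact sequence of $G_\q$-modules defining $\mathfrak{E}_\q$ for each $\q \in \Sigma \setminus \Sigma_2$, namely $0 \to \St^{\cont}_\q(\widehat{E}_\q^\times) \to \mathfrak{E}_\q \to \Z_p \to 0$. Tensoring these together (using that tensoring with a finite free $\Z_p$-module is exact) and taking the associated long exact sequences in the cohomology of $X_G^\Sigma$, one filters $\mrm{H}^d(X_G^\Sigma, \mathfrak{E}_{\Sigma\setminus\Sigma_2}\otimes_{\Z_p}\St_{\Sigma_2}(M), N)_\pi^\epsilon$ by subquotients of the form $\mrm{H}^d(X_G^\Sigma, \St_{\Sigma'}(\widehat{E}^\times)\otimes_{\Z_p}\St_{\Sigma_2}(M), N)_\pi^\epsilon$ for subsets $\Sigma' \subseteq \Sigma\setminus\Sigma_2$, and similarly on the target with $X_G^{\Sigma\setminus\{\p\}}$ and $\St_{\Sigma_2\setminus\{\p\}}$. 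The map $(\Ev_{\{\p\}})^*$ is compatible with these filtrations, so by the five-lemma (more precisely, by an induction on the length of the filtration using the snake lemma) it is enough to show that each graded piece of $(\Ev_{\{\p\}})^*$ has finite kernel and cokernel. But each such graded piece is precisely a map of the form appearing in Lemma \ref{bijremark}, with the role of the Steinberg factors played by $\St_{\Sigma'\cup(\Sigma_2\setminus\{\p\})}(\text{-})$ on the source together with the extra $\St_\p$ factor, and the coefficient module $N$ enlarged to $N \otimes_{\Z_p} (\text{finite free})$ — which is still finitely generated over $\Z_p$, so Lemma \ref{bijremark} applies after flat base change \eqref{flat2}.

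The main technical point — though not really an obstacle, since the same bookkeeping appears in the proof of Proposition \ref{class} — is keeping track of which characters $\chi_{\Sigma\setminus(\cdot)}$ twist which factors as one passes through the long exact sequences, and verifying that the degree shifts in the long exact sequences do not push one outside the range where Lemma \ref{bijremark} gives the conclusion; here one uses that Lemma \ref{bijremark} is stated for \emph{all} $d \ge 0$, so no degree restriction is lost. I would also note that the hypothesis $M_\p = \Z_p$ is exactly what is needed for the single-prime evaluation map $\Ev_{\{\p\}}\colon C_c(\scr{E}_\p,\Z_p)_+ \to \St_\p(\Z_p)$ to make sense with $\Z_p$-coefficients, so the reduction to Lemma \ref{bijremark} is clean. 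With these pieces in place the lemma follows.
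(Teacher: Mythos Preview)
Your proposal is correct and follows exactly the route the paper takes: the paper's proof is a single sentence stating that the lemma ``follows directly from Lemma \ref{bijremark} analyzing the long exact sequence induced by the short exact sequence defining the representations $\mathfrak{E}_{\p}$,'' and your write-up is a faithful expansion of that sentence --- reduce to one prime in $\Sigma_1$, devissage the $\mathfrak{E}_{\Sigma\setminus\Sigma_2}$ factor via the defining extensions, and invoke Lemma \ref{bijremark} on each graded piece. One small remark: your appeal to \eqref{flat2} to pass from $\Z$-coefficients to a general finitely generated $\Z_p$-module $N$ is not quite on the nose since \eqref{flat2} assumes flatness; the cleanest fix is to note that ``finite kernel and cokernel'' between finitely generated $\Z_p$-modules is equivalent to becoming an isomorphism after $\otimes_{\Z_p}\Q_p$, and \eqref{flat2} applies to $\Q_p$.
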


\noindent Let $\Sigma'\subset\Sigma$ be  non-empty subsets of $S$.
For the rest of the section we write 
\[
\St_{\Sigma}^\mrm{ct}(\widehat{E}^\times_{\Sigma'}):=\St_{\Sigma'}^\mrm{ct}(\widehat{E}^\times)\otimes_{\Z_p}\St^\mrm{ct}_{\Sigma\setminus\Sigma'}(\bb{Z_\p}).
\] 
Moreover, we consider the image of $c_{A,\epsilon}^{\Sigma}$ under the composition
\[
\mrm{H}^q\big(X_G^{\Sigma},\St_\Sigma(\bb{Z}_p),\Z_p\big)\too  \mrm{H}^q\big(X_G^{\Sigma},\St^{\cont}_\Sigma(\widehat{E}^\times_{\p})\big)\too \mrm{H}^q\big(X_G^{\Sigma},\St^{\cont}_\Sigma(\widehat{E}^\times_{\p}),\widehat{A}(E_{\p})\big).
\]

\begin{lemma}\label{Step2}
	 Let $\Sigma\subseteq S$ be a non-empty subset and $\p\in\Sigma$.
	The image of $c_{A,\epsilon}^{\Sigma}$ under the boundary 
	\[
	\partial_\p\colon \mrm{H}^q\big(X_G^{\Sigma},\St_{\Sigma}^{\cont}(\widehat{E}^\times_\p),\widehat{A}(E_{\p})\big)\too \mrm{H}^{q+1}\big(X_G^{\Sigma},\St^{\cont}_{\Sigma\setminus\{\p\}}(\bb{Z}_p),\widehat{A}(E_{\p})\big),
	\]
	obtained from the short exact sequence defining $\mathfrak{E}_\p$, is torsion. 
\end{lemma}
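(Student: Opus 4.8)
The plan is to reduce the statement to the case $\Sigma=\{\p\}$, which is precisely Theorem \ref{Step1}, by pushing everything down along the evaluation maps at the primes of $\Sigma\setminus\{\p\}$. For each $\q\in\Sigma\setminus\{\p\}$ one has $C_c(\scr{E}_\q,\Z_p)=C_c(G_\q/K_\q(\mathfrak{f}),\Z_p)$, so that precomposing with the surjection $\Ev_\q\colon C_c(\scr{E}_\q,\Z_p)\to\St_\q(\Z_p)\hookrightarrow\St_\q^{\cont}(\Z_p)$ and then absorbing the factor $C_c(\scr{E}_\q,\Z_p)$ into the level via Remark \ref{changelevel} takes us from $X_G^\Sigma$ to $X_G^{\Sigma\setminus\{\q\}}$. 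Carrying this out for every $\q\in\Sigma\setminus\{\p\}$, starting from the short exact sequence obtained by tensoring the defining sequence $0\to\St_\p^{\cont}(\widehat{E}_\p^{\times})\to\mathfrak{E}_\p\to\Z_p\to0$ of $\mathfrak{E}_\p$ with $\St^{\cont}_{\Sigma\setminus\{\p\}}(\Z_p)$, and then passing to cohomology with coefficients in $\widehat{A}(E_\p)$, produces a commutative ladder of long exact sequences. In particular we obtain a commutative square
\[
\xymatrix{
\mrm{H}^q\big(X_G^{\Sigma},\St_{\Sigma}^{\cont}(\widehat{E}_\p^{\times}),\widehat{A}(E_\p)\big)\ar[r]^-{\partial_\p}\ar[d]_-{\alpha} & \mrm{H}^{q+1}\big(X_G^{\Sigma},\St^{\cont}_{\Sigma\setminus\{\p\}}(\Z_p),\widehat{A}(E_\p)\big)\ar[d]^-{\beta}\\
\mrm{H}^q\big(X_G^{\{\p\}},\St_\p^{\cont}(\widehat{E}_\p^{\times}),\widehat{A}(E_\p)\big)\ar[r]^-{\partial_\p} & \mrm{H}^{q+1}\big(X_G^{\{\p\}},\Z_p,\widehat{A}(E_\p)\big)
}
\]
whose bottom row is the connecting map appearing in Theorem \ref{Step1}.

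Write $\phi$ for the composition appearing just before the statement of the lemma -- so that ``the image of $c_{A,\epsilon}^{\Sigma}$'' means $\phi(c_{A,\epsilon}^{\Sigma})$ -- and $\phi'$ for its analogue over $X_G^{\{\p\}}$. Since $\phi$ only alters the tensor factor at $\p$ (and the coefficients), it is compatible with the evaluation maps just described, so that $\alpha\big(\phi(c_{A,\epsilon}^{\Sigma})\big)=\phi'\big(\Ev_{\Sigma\setminus\{\p\}}^{*}\,c_{A,\epsilon}^{\Sigma}\big)$, where $\Ev_{\Sigma\setminus\{\p\}}^{*}\colon\mrm{H}^q(X_G^{\Sigma},\St_{\Sigma}(\Z_p),\Z_p)_{\pi}^{\epsilon}\to\mrm{H}^q(X_G^{\{\p\}},\St_{\p}(\Z_p),\Z_p)_{\pi}^{\epsilon}$ is the base change along \eqref{flat2} of the map of Lemma \ref{bijremark} (with $\Sigma_1=\Sigma\setminus\{\p\}$ and $\Sigma_2=\Sigma$). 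Both groups are finitely generated $\Z_p$-modules of rank one by Proposition \ref{class}, with $c_{A,\epsilon}^{\Sigma}$, resp.\ $c_{A,\epsilon}^{\{\p\}}$, generating their free parts, and $\Ev_{\Sigma\setminus\{\p\}}^{*}$ has finite kernel and cokernel; hence $\Ev_{\Sigma\setminus\{\p\}}^{*}(c_{A,\epsilon}^{\Sigma})=n\cdot c_{A,\epsilon}^{\{\p\}}+(\text{torsion})$ for some non-zero $n\in\Z_p$. Applying $\partial_\p\circ\phi'$, using $\Z_p$-linearity, and invoking Theorem \ref{Step1} -- which asserts that $\partial_\p(\phi'(c_{A,\epsilon}^{\{\p\}}))$ is torsion -- we conclude that $\partial_\p\big(\alpha(\phi(c_{A,\epsilon}^{\Sigma}))\big)$ is torsion, and hence so is $\beta\big(\partial_\p(\phi(c_{A,\epsilon}^{\Sigma}))\big)$ by commutativity of the square.

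It remains to check that $\beta$ has finite kernel on the $\pi$-isotypic $\epsilon$-component, for then $\partial_\p(\phi(c_{A,\epsilon}^{\Sigma}))$, which lies in that component, is forced to be torsion. By (the extension of) Lemma \ref{restrictioniso} we may replace $\St^{\cont}_{\Sigma\setminus\{\p\}}(\Z_p)$ by $\St_{\Sigma\setminus\{\p\}}(\Z_p)$ everywhere, and then \eqref{flat2} identifies $\beta$ with $\beta_0\otimes_{\Z}\widehat{A}(E_\p)$, where
\[
\beta_0\colon\mrm{H}^{q+1}_{\Z}\big(X_G^{\Sigma}(\mathfrak{f}),\St_{\Sigma\setminus\{\p\}},\Z\big)_{\pi}^{\epsilon}\too\mrm{H}^{q+1}_{\Z}\big(X_G^{\{\p\}}(\mathfrak{f}),\Z,\Z\big)_{\pi}^{\epsilon}
\]
is the map of Lemma \ref{bijremark} for $\Sigma_1=\Sigma_2=\Sigma\setminus\{\p\}$ (here $\chi_{\{\p\}}$ is trivial because of the running assumption that $A$ has split multiplicative reduction). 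Thus $\beta_0$ has finite kernel, and since $\widehat{A}(E_\p)$ is torsion-free, $\ker\beta=(\ker\beta_0)\otimes_{\Z}\widehat{A}(E_\p)$ is finite. This completes the proof. The one step that genuinely requires care is the commutativity of the reduction ladder -- that the iterated change-of-level identifications of Remark \ref{changelevel} are compatible with the connecting homomorphisms and with $\phi$ -- but this is routine bookkeeping, since every map involved is induced functorially from a morphism of one of the tensor factors of the coefficient modules.
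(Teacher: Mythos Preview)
Your proof is correct and follows essentially the same strategy as the paper: reduce to the singleton case $\Sigma=\{\p\}$ (Theorem \ref{Step1}) via the commutative square induced by $\Ev_{\Sigma\setminus\{\p\}}^{*}$, use that the left vertical map sends $c_{A,\epsilon}^{\Sigma}$ to a nonzero multiple of $c_{A,\epsilon}^{\{\p\}}$ modulo torsion, and then observe that the right vertical map has finite kernel on the $(\pi,\epsilon)$-component. The only cosmetic difference is that the paper cites Lemma \ref{bijlemma} directly for the finiteness of the kernels and cokernels of the vertical arrows, whereas you unwind this to Lemma \ref{bijremark} together with the flat base change \eqref{flat2}; these amount to the same thing.
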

\begin{proof} 
Consider the commutative diagram
\[\xymatrix{
\mrm{H}^q\big(X_G^\Sigma,\St^{\cont}_\Sigma(\Z_p),\Z_p\big)_\pi^\epsilon\ar[r]\ar[d] & \mrm{H}^q\big(X_G^\Sigma,\St^{\cont}_\Sigma(\widehat{E}^\times_\p),\widehat{A}(E_\p)\big)_\pi^\epsilon\ar[r]^-{\partial_\p}\ar[d] & \HH^{q+1}(X_G^\Sigma,\St^{\cont}_{\Sigma\setminus\{\p\}}(\Z_\p),\widehat{A}(E_{\p}))_\pi^\epsilon\ar[d]\\
\mrm{H}^q\big(X_G^{\{\p\}},\St^{\cont}_\p(\Z_p),\Z_p\big)_\pi^\epsilon\ar[r] & \mrm{H}^q\big(X_G^{\{\p\}},\St^{\cont}_\p(\widehat{E}^\times),\widehat{A}(E_\p)\big)_\pi^\epsilon\ar[r]^-{\partial_\p} & \mrm{H}^{q+1}\big(X_G^{\{\p\}},\Z_p,\widehat{A}(E_\p)\big)_\pi^\epsilon
}\]
where the vertical arrows are induced by $\Ev_{\Sigma\setminus\{\p\}}\colon  \bigotimes_{\p\in \Sigma\setminus\{\p\}} C_c(\scr{E}_\p,\bb{Z}_p)_+\to \St_{\Sigma\setminus\{\p\}}(\bb{Z}_p)$ and have finite kernel and cokernel by Lemma \ref{bijlemma}.
Moreover, the leftmost vertical arrow maps  $c^{\Sigma}_{A,\epsilon}$ to a non-zero multiple of $c^{\{\p\}}_{A,\epsilon}$.
Hence, the claim follows from Theorem \ref{Step1}.
\end{proof}

\noindent Let $\Sigma_1\subset\Sigma_2\subseteq\Sigma $ be subsets of $S$ with $\Sigma_2=\Sigma_1\cup\{\p\}$ for some $\p\not\in\Sigma_1$. Thanks to Lemma \ref{Step2} we can assume by  inductive hypothesis that we can lift $c_{A,\epsilon}^{\Sigma}$ to a class in 
\[
\widehat{c}_{A,\epsilon}^{\Sigma_1}\in \mrm{H}^{q}\big(X_G^\Sigma,\mathfrak{E}_{\Sigma_1}\otimes_{\Z_p}\St^{\cont}_{\Sigma\setminus\Sigma_1}(\Z_p),\widehat{A}(E_{\Sigma_1})\big)_{\pi}^{\epsilon}.
\]
 Then, we consider the image of $\widehat{c}_{A,\epsilon}^{\Sigma_1}$ under the composition
\[\xymatrix{
\mrm{H}^{q}\big(X_G^{\Sigma},\mathfrak{E}_{\Sigma_1}\otimes_{\Z_p}\St^{\cont}_{\Sigma\setminus\Sigma_1}(\Z_p),\widehat{A}(E_{\Sigma_1})\big)\ar[r]\ar@{.>}[rd]
&  \mrm{H}^{q}\big(X_G^{\Sigma},\mathfrak{E}_{\Sigma_1}\otimes_{\Z_p}\St^\mrm{ct}_{\Sigma\setminus\Sigma_1}(\widehat{E}^\times_\p),\widehat{A}(E_{\Sigma_1})\otimes_{\Z_p}\widehat{E}^\times_\p\big)\ar[d]\\
& \mrm{H}^{q}\big(X_G^{\Sigma},\mathfrak{E}_{\Sigma_1}\otimes_{\Z_p}\St^\mrm{ct}_{\Sigma\setminus\Sigma_1}(\widehat{E}^\times_\p),\widehat{A}(E_{\Sigma_2})\big).
}\]
We complete the inductive step in the next lemma.

\begin{lemma}\label{Step3}
	Let $\Sigma_1\subset\Sigma_2\subseteq\Sigma $ be subsets of $S$ such that $\Sigma_2=\Sigma_1\cup\{\p\}$ for some $\p\not\in\Sigma_1$, then there exists a non-zero multiple of $\widehat{c}_{A,\epsilon}^{\Sigma_1}$ that can be lifted with respect to 
	\[
	\mrm{H}^{q}\big(X_G^\Sigma,\mathfrak{E}_{\Sigma_2}\otimes_{\Z_p}\St^\mrm{ct}_{\Sigma\setminus\Sigma_2}(\Z_p),\widehat{A}(E_{\Sigma_2})\big)_{\pi}^{\epsilon}\too \mrm{H}^{q}\big(X_G^\Sigma,\mathfrak{E}_{\Sigma_1}\otimes_{\Z_p}\St^\mrm{ct}_{\Sigma\setminus\Sigma_1}(\widehat{E}^\times_\p),\widehat{A}(E_{\Sigma_2})\big)_{\pi}^{\epsilon}.
	\]
\end{lemma}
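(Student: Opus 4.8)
The plan is to run the same kind of long exact sequence argument as in the proofs of Theorem \ref{Step1} and Lemma \ref{Step2}. Tensoring the defining short exact sequence $0\to\St_\p^\mrm{ct}(\widehat{E}_\p^\times)\to\mathfrak{E}_\p\to\Z_p\to 0$ with the flat $\Z_p[G_{\Sigma\setminus\{\p\}}]$-module $\mathfrak{E}_{\Sigma_1}\otimes_{\Z_p}\St^\mrm{ct}_{\Sigma\setminus\Sigma_2}(\Z_p)$ and applying $\HH^\bullet(X_G^\Sigma,-,\widehat{A}(E_{\Sigma_2}))_\pi^\epsilon$ produces a long exact sequence. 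Since $\mathfrak{E}_{\Sigma_2}=\mathfrak{E}_{\Sigma_1}\otimes_{\Z_p}\mathfrak{E}_\p$ and $\mathfrak{E}_{\Sigma_1}\otimes_{\Z_p}\St^\mrm{ct}_{\Sigma\setminus\Sigma_1}(\widehat{E}^\times_\p)=\mathfrak{E}_{\Sigma_1}\otimes_{\Z_p}\St_\p^\mrm{ct}(\widehat{E}^\times)\otimes_{\Z_p}\St^\mrm{ct}_{\Sigma\setminus\Sigma_2}(\Z_p)$, the map in the statement of the lemma is exactly one of the maps of that sequence. Writing $z$ for the image of $\widehat{c}_{A,\epsilon}^{\Sigma_1}$ under the dotted arrow (integration and Tate uniformization at $\p$), it follows that a non-zero multiple of $z$ lifts if and only if its image $\partial_\p(z)$ under the connecting homomorphism
\[
\partial_\p\colon \HH^q\big(X_G^\Sigma,\mathfrak{E}_{\Sigma_1}\otimes_{\Z_p}\St^\mrm{ct}_{\Sigma\setminus\Sigma_1}(\widehat{E}^\times_\p),\widehat{A}(E_{\Sigma_2})\big)_\pi^\epsilon\too \HH^{q+1}\big(X_G^\Sigma,\mathfrak{E}_{\Sigma_1}\otimes_{\Z_p}\St^\mrm{ct}_{\Sigma\setminus\Sigma_2}(\Z_p),\widehat{A}(E_{\Sigma_2})\big)_\pi^\epsilon
\]
is torsion. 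So the whole lemma reduces to this torsion statement.

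Next I would compare this with the parallel long exact sequence obtained from the same $\mathfrak{E}_\p$-extension but with $\mathfrak{E}_{\Sigma_1}$ replaced by $\St^\mrm{ct}_{\Sigma_1}(\widehat{E}^\times)$. The inclusion $\St^\mrm{ct}_{\Sigma_1}(\widehat{E}^\times)\hookrightarrow\mathfrak{E}_{\Sigma_1}$ induces a morphism of long exact sequences, and in degree $q+1$ the corresponding vertical map
\[
\HH^{q+1}\big(X_G^\Sigma,\mathfrak{E}_{\Sigma_1}\otimes_{\Z_p}\St^\mrm{ct}_{\Sigma\setminus\Sigma_2}(\Z_p),\widehat{A}(E_{\Sigma_2})\big)_\pi^\epsilon\too \HH^{q+1}\big(X_G^\Sigma,\St^\mrm{ct}_{\Sigma_1}(\widehat{E}^\times)\otimes_{\Z_p}\St^\mrm{ct}_{\Sigma\setminus\Sigma_2}(\Z_p),\widehat{A}(E_{\Sigma_2})\big)_\pi^\epsilon
\]
has finite kernel by Lemma \ref{inj}, applied with the subsets $\Sigma_1\subseteq\Sigma\setminus\{\p\}\subseteq\Sigma$ and the collection $M$ having $M_\q=\widehat{E}^\times_\q$ for $\q\in\Sigma_1$ and $M_\q=\Z_p$ for $\q\in\Sigma\setminus\Sigma_2$ (note $q+\lvert\Sigma\setminus(\Sigma\setminus\{\p\})\rvert=q+1$). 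Hence it suffices to show that the image $\partial_\p(z')$ of $\partial_\p(z)$ under this map is torsion, where $z'$ denotes the image of $z$ in $\HH^q\big(X_G^\Sigma,\St^\mrm{ct}_{\Sigma_1}(\widehat{E}^\times)\otimes_{\Z_p}\St^\mrm{ct}_{\Sigma\setminus\Sigma_1}(\widehat{E}^\times_\p),\widehat{A}(E_{\Sigma_2})\big)_\pi^\epsilon$.

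Finally I would unwind $z'$. By the inductive hypothesis the image of $\widehat{c}_{A,\epsilon}^{\Sigma_1}$ under $\St^\mrm{ct}_{\Sigma_1}(\widehat{E}^\times)\hookrightarrow\mathfrak{E}_{\Sigma_1}$ is, up to a non-zero integer, the image of $c_{A,\epsilon}^\Sigma$ under integration (via \eqref{integrate}) and Tate uniformization at the primes of $\Sigma_1$; since the dotted arrow is integration and Tate uniformization at the disjoint prime $\p$, it commutes with this map, so $z'$ is, up to a non-zero integer, the image of $c_{A,\epsilon}^\Sigma$ under integration and Tate uniformization at all of $\Sigma_2$. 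As integration and Tate uniformization at $\Sigma_1$ are functorial in the remaining tensor factor, they commute with the connecting homomorphism $\partial_\p$ attached to the $\p$-extension; therefore $\partial_\p(z')$ is, up to a non-zero integer, the image under integration and Tate uniformization at $\Sigma_1$ of $\partial_\p$ applied to the image of $c_{A,\epsilon}^\Sigma$ under integration and Tate uniformization at $\p$ alone. That last class is torsion by Lemma \ref{Step2}, and a $\Z_p$-linear map takes torsion to torsion, so $\partial_\p(z')$ — hence $\partial_\p(z)$ — is torsion, completing the inductive step.

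The main obstacle is the bookkeeping in the last paragraph: one has to keep precise track of which tensor factor carries which coefficient module, make the inductive hypothesis on $\widehat{c}_{A,\epsilon}^{\Sigma_1}$ fully explicit, and verify carefully that the connecting homomorphism for the $\mathfrak{E}_\p$-extension genuinely commutes with the integration-and-Tate maps at the disjoint prime set $\Sigma_1$ (i.e. that \eqref{integrate} is natural enough for this). Once that naturality is in place, the result is a formal consequence of Lemmas \ref{inj} and \ref{Step2} together with the long exact sequence induced by the short exact sequence defining $\mathfrak{E}_\p$.
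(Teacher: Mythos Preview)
Your proposal is correct and follows essentially the same route as the paper: set up the long exact sequence from the $\mathfrak{E}_\p$-extension, compare via the inclusion $\St^{\cont}_{\Sigma_1}(\widehat{E}^\times)\hookrightarrow\mathfrak{E}_{\Sigma_1}$, invoke Lemma~\ref{inj} for finite kernel, and conclude by the inductive hypothesis together with Lemma~\ref{Step2}. The only difference is presentational: the paper first uses Lemma~\ref{bijlemma} to reduce to the case $\Sigma=\Sigma_2$ (so the factor $\St^{\cont}_{\Sigma\setminus\Sigma_2}(\Z_p)$ disappears and the diagram becomes cleaner), whereas you carry that extra tensor factor throughout and apply Lemma~\ref{inj} directly with the triple $\Sigma_1\subseteq\Sigma\setminus\{\p\}\subseteq\Sigma$ in degree $q+1$.
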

\begin{proof}
As in the proof of Lemma \ref{Step2}, we use Lemma \ref{bijlemma} to reduce to the case $\Sigma=\Sigma_2.$
We consider the commutative diagram
\[\xymatrix{
\HH^{q}(X_G^\Sigma,\mathfrak{E}_{\Sigma_1}\otimes_{\Z_p}\St^\mrm{ct}_{\p}(\Z_\p),\widehat{A}(E_{\Sigma_1}))_\pi^\epsilon\ar[r]\ar[d] &  \HH^{q}(X_G^\Sigma,\St^{\cont}_{\Sigma}(\widehat{E}^\times_{\Sigma_1}),\widehat{A}(E_{\Sigma_1}))_\pi^\epsilon\ar[d]\\
\HH^{q}(X_G^\Sigma,\mathfrak{E}_{\Sigma_1}\otimes_{\Z_p}\St^{\cont}_{\p}(\widehat{E}_{\p}^{\times}),\widehat{A}(E_{\Sigma}))_\pi^\epsilon\ar[r]\ar[d]^-{\partial_\p} & \HH^{q}(X_G^\Sigma,\St^{\cont}_{\Sigma}(\widehat{E}^\times),\widehat{A}(E_{\Sigma}))_\pi^\epsilon\ar[d]^-{\partial_\p}\\
\HH^{q+1}(X_G^\Sigma,\mathfrak{E}_{\Sigma_1},\widehat{A}(E_{\Sigma}))_\pi^\epsilon\ar[r] & \HH^{q+1}(X_G^\Sigma,\St^{\cont}_{\Sigma_1}(\widehat{E}^\times),\widehat{A}(E_{\Sigma}))_\pi^\epsilon
}\]
	where the horizontal maps are induced by the inclusion $\St^{\cont}_{\Sigma_1}(\widehat{E}^\times)\into \mathfrak{E}_{\Sigma_1}$.
	By inductive hypothesis, the top horizontal map sends $\widehat{c}_{A,\epsilon}^{\Sigma_1}$ to the image of $c_{A,\epsilon}^{\Sigma}$ in $\HH^{q}(X_G^\Sigma,\St^{\cont}_{\Sigma}(\widehat{E}^\times_{\Sigma_1}),\widehat{A}(E_{\Sigma_1}))_\pi^\epsilon$.
As the horizontal maps have finite kernel by Lemma \ref{inj}, the claim follows from Lemma \ref{Step2}. 
\end{proof}


\section{Anticyclotomic p-adic L-functions}
Following (\cite{Felix}, \cite{BG2}) we attach an anticyclotomic $p$-adic $L$-functions to the elliptic curve $A_{/F}$ and the quadratic extension $E/F$.
The main result of this section is a $p$-adic Gross--Zagier formula (Theorem \ref{GZtheorem}) relating higher derivatives of these $p$-adic $L$-functions to plectic $p$-adic invariants.

\subsection{Homogeneous spaces and homomorphisms}
Let $H$ be a topological group and $X$ a homogeneous $H$-space, i.e. a topological space $X$ with a transitive continuous $H$-action $H \times X \too X$
such that for every $x\in X$ the map
\[
H\too X,\quad h \mapsto h.x 
\]
induces a homeomorphism $H/H_x \too X$ where $H_x$ is the stabilizer of $x$.
For any abelian topological group $N$ endowed with the trivial $H$-action, the short exact sequence of $H$-modules
\[
0 \too N \too C(X,N) \too C(X,N)/N \too 0
\]
induces the boundary map
\[
\partial\colon \big(C(X,N)/N\big)^{H} \too \HH^{1}(H,N)=\Hom(H,N).
\]
Explicitly, the map $\partial$ is given as follows: given an element $\bar{f}\in (C(X,N)/N)^{H}$ we let $f\in C(X,N)$ be an arbitrary lift of it.
For every $h\in H$ and $x\in X$ the element $\varphi_{\bar{f}}(h):=f(h. x)-f(x)\in N$ is independent of $x$ and the chosen lift. Then we have
\[
\partial(\bar{f})(h)=\varphi_{\bar{f}}(h).
\]
\noindent
In particular, we deduce that $\partial$ takes values in the group $\Hom_{\cont}(H,N)$ of continuous homomorphism from $H$ to $N$. Moreover, $\partial(\bar{f})$ is trivial on the stabilizer $H_x$.

\begin{lemma}\label{homogeneous}
The map $\partial$ induces an isomorphism
\[
\partial\colon \big(C(X,N)/N\big)^H \xlongrightarrow{\sim} \big\{\varphi \in \Hom_{\cont}(H,N)\mid \varphi(h)=0\ \ \forall\ h\in H_x\big\}.
\]
\end{lemma}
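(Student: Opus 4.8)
The plan is to exhibit an explicit inverse to the map $\partial$. Given a continuous homomorphism $\varphi\colon H\to N$ vanishing on the stabilizer $H_x$, I would like to define $f_\varphi\in C(X,N)$ by the recipe $f_\varphi(h.x):=\varphi(h)$. First I would check this is well-defined: if $h.x=h'.x$, then $h^{-1}h'\in H_x$, so $\varphi(h^{-1}h')=0$, whence $\varphi(h)=\varphi(h')$ by the homomorphism property. Transitivity of the $H$-action guarantees every point of $X$ is of the form $h.x$, so $f_\varphi$ is defined on all of $X$. Continuity of $f_\varphi$ follows from continuity of $\varphi$ together with the fact that $H\to X$, $h\mapsto h.x$, induces a homeomorphism $H/H_x\xrightarrow{\sim}X$: the function $f_\varphi$ is, by construction, the function on $X\cong H/H_x$ induced by $\varphi$, which factors through $H/H_x$ precisely because $\varphi$ kills $H_x$.

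Next I would verify that the class $\bar f_\varphi\in C(X,N)/N$ is $H$-invariant and that $\partial(\bar f_\varphi)=\varphi$. For invariance, note that for $g\in H$ and $h.x\in X$ we have $(g\cdot f_\varphi)(h.x)=f_\varphi(g^{-1}h.x)=\varphi(g^{-1}h)=\varphi(h)-\varphi(g)=f_\varphi(h.x)-\varphi(g)$, so $g\cdot f_\varphi$ and $f_\varphi$ differ by the constant $-\varphi(g)\in N$; hence $\bar f_\varphi$ is fixed in the quotient. For the identity $\partial(\bar f_\varphi)=\varphi$, one uses the explicit description of $\partial$ recalled before the lemma: $\partial(\bar f_\varphi)(g)=f_\varphi(g.x)-f_\varphi(x)=\varphi(g)-\varphi(e)=\varphi(g)$, using that $x=e.x$ and $\varphi(e)=0$.

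Finally I would check that $\varphi\mapsto\bar f_\varphi$ is inverse to $\partial$ on both sides. One direction is the computation just performed, $\partial(\bar f_\varphi)=\varphi$. For the other, start with $\bar f\in(C(X,N)/N)^H$, pick a lift $f$, and observe that by the explicit formula $\partial(\bar f)(h)=f(h.x)-f(x)$ for all $h$, so $f_{\partial(\bar f)}(h.x)=f(h.x)-f(x)$, i.e. $f_{\partial(\bar f)}=f-f(x)$ differs from $f$ by a constant, hence $\bar f_{\partial(\bar f)}=\bar f$ in the quotient. Together with the general remarks already established before the statement — namely that $\partial$ lands in $\Hom_{\cont}(H,N)$ and that $\partial(\bar f)$ vanishes on $H_x$ — this shows $\partial$ is a bijection onto the indicated subgroup, and it is visibly a group homomorphism, completing the proof. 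I do not anticipate a serious obstacle here; the only point requiring a little care is the continuity of $f_\varphi$, which is where the hypothesis that $H/H_x\to X$ be a homeomorphism (part of the definition of homogeneous $H$-space) is essential rather than incidental.
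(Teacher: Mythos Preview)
Your proof is correct and follows essentially the same approach as the paper. The paper argues injectivity separately (noting that transitivity forces $N^H\to C(X,N)^H$ to be an isomorphism, so the boundary map $\partial$ has trivial kernel) and then constructs the same $f_\varphi$ for surjectivity, whereas you package both directions into an explicit two-sided inverse; this is only a stylistic difference.
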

\begin{proof}
	Since $H$ acts transitively on $X$, the natural inclusion $N^H\to C(X,N)^H$ is an isomorphism.
	Hence, the homomorphism $\partial$ is injective. Now, suppose $\varphi\colon H \to N$ is a continuous homomorphism that is trivial on $H_x$ for one (and thus for all) $x\in X$.
	Then $\varphi$ descends to a continuous function $f_\varphi \colon X \to N$ whose image in $C(X,N)/N$ is $H$-invariant and the lemma follows.
\end{proof}

\subsubsection{Torus action.}
The local embedding $\psi\colon E_\p \into B_\p\cong M_2(F_\p)$, provides a structure of a one-dimensional $E_\p$-vector space on $F_\p^2$. The torus $T_\p$ acts simply transitively on $\PP(F_\p)$ because $E_\p^\times$ acts simply transitively on $E_\p\setminus\{0\}$.
Moreover, $\PP(F_\p)$ is a principal homogeneous $T_\p$-space because both $\PP(F_\p)$ and $T_\p$ are compact.
We are interested in the $T_\p$-invariant function
\[
f_{\psi,\p}:=\Phi_\p([\tau_{\psi,\p}]-[\bar{\tau}_{\psi,\p}])\in C(\PP(F_\p),E_\p^{\times})/E_\p^{\times}.
\]

\begin{lemma}\label{keyidentity}
For all $t\in T_\p$ we have
\[
\partial(f_{\psi,\p})(t)=t^{1-\sigma_\p}.
\]
\end{lemma}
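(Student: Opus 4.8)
The statement compares two homomorphisms $T_\p \to E_\p^\times$: on the one hand $\partial(f_{\psi,\p})$, where $f_{\psi,\p} = \Phi_\p([\tau_{\psi,\p}] - [\bar\tau_{\psi,\p}])$ viewed in $C(\PP(F_\p), E_\p^\times)/E_\p^\times$; on the other hand $t \mapsto t^{1-\sigma_\p}$. My plan is to unwind the explicit description of the boundary map $\partial$ recalled just above Lemma \ref{homogeneous} and evaluate it directly on a representative, using the characterizing property of $\tau_{\psi,\p}$ from Section \ref{Integration} — namely that the $T_\p$-action on the tangent space at $\tau_{\psi,\p}$ is given by $t \mapsto t^{1-\sigma_\p}$.

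First I would fix a base point $x_0 \in \PP(F_\p)$ and recall from the explicit formula for $\partial$ that, for any lift $f \in C(\PP(F_\p), E_\p^\times)$ of $f_{\psi,\p}$ and any $t \in T_\p$,
\[
\partial(f_{\psi,\p})(t) = f(t.x_0) \cdot f(x_0)^{-1},
\]
a quantity independent of $x_0$ and of the chosen lift. For $f$ I would take the honest function $x \mapsto (x - \tau_{\psi,\p})/(x - \bar\tau_{\psi,\p})$ on $\PP(F_\p)$, which lifts $\Phi_\p([\tau_{\psi,\p}] - [\bar\tau_{\psi,\p}])$ by the definition of $\Phi_\p$. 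Since this expression is $T_\p$-invariant only modulo constants, the ratio $f(t.x_0)/f(x_0)$ is the relevant constant, and I would compute it by a judicious choice of $x_0$. The natural choice is to degenerate $x_0$ toward the fixed point $\tau_{\psi,\p}$: writing the Möbius action of $t \in T_\p$ in a coordinate centered at $\tau_{\psi,\p}$, the derivative at the fixed point is exactly $t^{1-\sigma_\p}$ by our normalization, and the derivative at $\bar\tau_{\psi,\p}$ is the Galois conjugate $t^{\sigma_\p - 1} = (t^{1-\sigma_\p})^{-1}$ since $\sigma_\p$ interchanges the two fixed points. Taking the limit $x_0 \to \tau_{\psi,\p}$ in the cross-ratio-type expression $\big[(t.x_0 - \tau_{\psi,\p})(x_0 - \bar\tau_{\psi,\p})\big]\big/\big[(x_0 - \tau_{\psi,\p})(t.x_0 - \bar\tau_{\psi,\p})\big]$ yields precisely $t^{1-\sigma_\p}$ (the factor from $\tau_{\psi,\p}$ contributing the derivative, the factor from $\bar\tau_{\psi,\p}$ contributing $1$ in the limit). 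Alternatively — and this is probably the cleaner writeup — one conjugates $\psi$ so that $\tau_{\psi,\p} = 0$ and $\bar\tau_{\psi,\p} = \infty$, in which coordinates $T_\p$ acts on $\PP(F_\p)$ by scaling, $t.x = t^{1-\sigma_\p} x$, and then $f(x) = x$ up to a constant, so $\partial(f_{\psi,\p})(t) = t.x_0 / x_0 = t^{1-\sigma_\p}$ on the nose.

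The main obstacle, such as it is, is bookkeeping rather than substance: one must check that the identification of the $T_\p$-action on $\PP(F_\p)$ with scaling by $t^{1-\sigma_\p}$ in the coordinate centered at $\tau_{\psi,\p}$ is consistent with the tangent-space normalization chosen in Section \ref{Integration}, and that no spurious sign or inversion creeps in from the choice of which fixed point is called $\tau_{\psi,\p}$ versus $\bar\tau_{\psi,\p}$ — this is exactly what pins down the exponent $1-\sigma_\p$ rather than $\sigma_\p - 1$. Once the coordinate is set up correctly, the function $f_{\psi,\p}$ becomes the identity character up to constants and the computation of $\partial$ is immediate from its explicit formula. I would also remark that $\partial(f_{\psi,\p})$ is automatically trivial on the stabilizer of any point, consistent with Lemma \ref{homogeneous}, but here $T_\p$ acts simply transitively so the stabilizers are trivial and this imposes no condition.
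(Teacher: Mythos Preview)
Your second approach --- conjugate so that $\tau_{\psi,\p}=0$ and $\bar\tau_{\psi,\p}=\infty$, whence $T_\p$ acts by scaling via $t^{1-\sigma_\p}$ and $f$ becomes the identity --- is exactly the paper's proof. The paper makes the conjugation precise via Skolem--Noether: the conjugating element lies in $\GL_2(E_\p)$ (it must, since $\tau_{\psi,\p}\notin\PP(F_\p)$), so the image of $\PP(F_\p)$ is no longer $\PP(F_\p)$ but some other principal homogeneous $T_\p$-space $X\subset\PP(E_\p)$. Your phrase ``$T_\p$ acts on $\PP(F_\p)$ by scaling'' should be adjusted accordingly, but this is cosmetic.

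Your first approach, however, has a genuine gap: you propose taking $x_0\to\tau_{\psi,\p}$ with $x_0\in\PP(F_\p)$, but $\PP(F_\p)$ is closed in $\PP(E_\p)$ and does not contain $\tau_{\psi,\p}$, so no such limit exists in the $p$-adic topology. The cross-ratio expression $f(t.x_0)/f(x_0)$ is of course constant in $x_0$ (that is the content of $T_\p$-invariance modulo constants), so there is no need for any limit --- just evaluate at a single $x_0$. But to read off the answer one still needs coordinates in which the action is transparent, which is your second approach.
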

\begin{proof}
To compute $\partial(f_{\psi,\p})$ we are allowed to replace the principal homogeneous $T_\p$-space $\PP(F_\p)$ by an isomorphic one. The principal homogeneous $T_\p$-space structure on $\PP(F_\p)$ depends on the embedding $\psi$ and the fixed isomorphism $\iota\colon B_\p\cong M_2(F_\p)$, thus our strategy is to find another embedding $B_\p\hookrightarrow\mrm{M}_2(E_\p)$ giving a simple expression for the function $f_{\psi,\p}$ and the $T_\p$-action.

\noindent For convenience we identify $E_\p$ with its image under $\psi$, then we can choose an element $u\in B_\p$ such that $u^2=1$, $B_\p=E_\p\oplus E_\p u$, and $u$ anticommutes with the elements of $E_\p$, i.e. $u\cdot z=z^{\sigma_\p}\cdot u$ for all $z\in E_\p$.  Therefore, $B_\p$ is a rank two $E_\p$-module on which $B_\p$ acts by multiplication on the right. If we write elements of $B_\p$ as $a+bu$ with $a,b\in E_\p$, we obtain a local embedding 
\[
\iota'\colon  B_\p\hookrightarrow\mrm{M}_2(E_\p),\qquad a+bu\mapsto \begin{pmatrix}
	a&b\\
	b^{\sigma_\p}& a^{\sigma_\p}
\end{pmatrix}.
\]
By Skolem-Noether theorem, there exists $A\in\mrm{GL}_2(E_\p)$ such that $\iota'(\cdot)=A^{-1}\iota(\cdot)A$. The action of $A\in\mrm{GL}_2(E_\p)$ gives a $T_\p$-equivariant isomorphism $\alpha\colon \PP(E_\p)\overset{\sim}{\to} \PP(E_\p)$ if we endow the domain with $T_\p$-action given by $\iota'\circ\psi$ and the target with the $T_\p$-action given by $\iota\circ\psi$. The isomorphism maps  $0$ to $\tau_{\psi,\p}$ and $\infty$ to $\bar{\tau}_{\psi,\p}$. Moreover, if we set $X:=A^{-1}(\PP(F_\p))$, then $f_{\psi,\p}'=f_{\psi,\p}\circ\alpha$ satisfies
\[ 
f_{\psi,\p}'\in C(X,E_\p^\times)/E_\p^\times,\qquad f_{\psi,\p}'(x)=x.
\]
Finally, as any $t\in T_\p$ acts on $X$ through multiplication by 
$t^{1-\sigma_\p}$, the claim follows.
\end{proof}
\begin{remark}\label{imprem}
	The endomorphism of the torus $T_\p$ given by 
	\[
	t \mapsto\ t^{1-\sigma_\p} \pmod{ F_\p^{\times}}
	\]
	equals the squaring map $t\mapsto t^2$. 
\end{remark}

\subsection{Characters of the torus}
Let $E_{\mathfrak{c},\mbox{\tiny$S$}}$ denote the union of the narrow ring class fields of $E$ of conductor $\mathfrak{c}\cdot p_S^n$ for $n\geq 0.$
Then, the Artin map induces an isomorphism
\[
\rec_E\colon \overline{T(F)}^+\backslash T(\A^{\infty})/U(\mathfrak{c})^{S}\xlongrightarrow{\sim} \Gal\big(E_{\mathfrak{c},\mbox{\tiny$S$}}/E\big):=\cal{G}_{\mathfrak{c},\mbox{\tiny $S$}}
\]
where $\overline{T(F)}^+$ represents the closure of $T(F)^+$ in $T(\A^{\infty})/U(\mathfrak{c})^{S}$.

\noindent For $N$ a $T(F)^+$-module and $?\in \{\emptyset,c\}$, we let $T(F)^+$ act on the space of continuous functions $C_?\big(T(\A^{\infty})/U^{S}(\mathfrak{c}),N)$ via 
$(t\cdot f)(x)=t\cdot f(t^{-1}x)$.
Generalizing slightly previous notation,  we set
\begin{align*}
\mrm{H}_d(X_T(\mathfrak{c}),N)&:=\HH_d\big(T(F)^+,C_c\big(T(\A^{\infty})/U(\mathfrak{c}),N\big)\big)\\
\intertext{and}
\mrm{H}^{d}\big(X_T(\mathfrak{c}),N\big)&:=\mrm{H}^{d}\big(T(F)^+,C(T(\A^\infty)/U(\mathfrak{c}),N\big);
\end{align*}
i.e.~we allow non-trivial coefficients.

\noindent Let $R$ be a profinite $\Z_p$-algebra and $\chi\colon \cal{G}_{\mathfrak{c},\mbox{\tiny $S$}} \to R^{\times}$
a continuous group homomorphism.
By writing $\chi$ as a product $\chi=\prod_\q \chi_\q$ of local characters $\chi_\q\colon T_\q \to R^\times$ for all primes $\q$ of $F$, we may view $\chi$ as a $T(F)^+$-invariant element of 
\[
C\big(T(\A^{S,\infty})/U(\mathfrak{c})^S,\ \otimes_{\p\in S}C(T_{\p},R)\big),
\] where the tensor product is taken over $R$. 
For any topological abelian group $N$, we define $C_\plectic(T_S,N)$ to be the quotient of $C(T_S,N)$ by the subspace of all functions which are constant with respect to some variable, i.e.
\[
C_\plectic(T_S,N):=C(T_S,N)/ \sum_{\Sigma\subsetneq S} C(T_{\Sigma},N)
\]

\begin{definition}
Similarly to Definition \ref{twisted}, we define a homology class associated to $\chi$ by
	\[
	\Theta_{\chi}:=\chi\cap\vartheta\in \mrm{H}_{q}\big(X_T(\mathfrak{c}),\ \otimes_{\p\in S}C(T_{\p},R)\big),
	\]	
and denote by $\widetilde{\Theta}_{\chi}$ its image under the map in homology induced by $\otimes_{\p\in S}C(T_{\p},R)
	\to C_\plectic(T_{S},R).$
\end{definition}
\noindent Fix a closed ideal $I\subseteq R$ and for any $R$-module $N$ put $\overline{N}:=N/I N$. Under some assumptions, we aim at relating  $\Theta_{\chi}$ to the class $\theta_{\overline{\chi}}\in \mrm{H}_q(X_T(\mathfrak{c}),\overline{R})$ associated to $\overline{\chi}:=\chi\pmod{I}$ as in Section \ref{twistedfun}.  Suppose that for every $\p\in S$
\[
\chi_{\p} \equiv 1 \pmod{I}.
\]
Then, the functions $\mrm{d}\chi_\p\colon T_\p \to \overline{I}$, given by $t\mapsto \chi_\p(t)-1 \pmod{I^2}$,
are group homomorphisms and can be viewed as cohomology classes  as in Lemma \ref{homogeneous}
\begin{equation}
\mrm{d}\chi_\p\in \mrm{H}^{0}\big(T_\p,C(T_\p,\overline{I})/\overline{I}\big).
\end{equation}
By further assuming that  $I^{r+1}=0$, multiplication induces a $T_S$-equivariant and $R$-linear map
$\otimes_{\p\in S}C(T_\p,\overline{I})\to C(T_S,I^r)$, in turn giving a homomorphism
\begin{equation}
\mu\colon \bigotimes_{\p\in S}C(T_\p,\overline{I})/\overline{I}\too C_\plectic(T_S,I^{r}).
\end{equation}

\noindent Unravelling the definitions one easily deduces the following lemma.
\begin{lemma}\label{characters}
 In the homology group $\mrm{H}_{q}\big(X_T^S(\mathfrak{c}), C_\plectic(T_S,R)\big)$, the following equality holds
\[
\widetilde{\Theta}_{\chi}=  \mu_{\ast}\big(\mrm{d}\chi_{\p_1}\cup\ldots\cup\mrm{d}\chi_{\p_{r}}\big)\ \cap \ \vartheta_{\overline{\chi}}.
\]
In particular, $\widetilde{\Theta}_{\chi}=0$ if the $r$-th power $I^r$ equals zero.
\end{lemma}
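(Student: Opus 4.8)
The plan is to unwind the definitions of both sides and check that they agree as homology classes in $\mrm{H}_q(X_T(\mathfrak{c}), C_\plectic(T_S,R))$. First I would recall that $\Theta_\chi = \chi \cap \vartheta$, where $\chi$ is viewed as the $T(F)^+$-invariant element of $C(T(\A^{S,\infty})/U(\mathfrak{c})^S,\ \otimes_{\p\in S}C(T_\p,R))$ obtained by factoring $\chi = \prod_\q \chi_\q$ into local characters and separating out the components at primes in $S$; then $\widetilde\Theta_\chi$ is the image of $\Theta_\chi$ under the map induced by the projection $\otimes_{\p\in S}C(T_\p,R)\to C_\plectic(T_S,R)$. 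So the task is to express the image of the ``$S$-part'' $\otimes_{\p\in S}\chi_\p$ in $C_\plectic(T_S,R)$ in terms of the differentials $\dd\chi_\p$ and the reduced character $\overline\chi$.

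The key computational step is the following local identity: since $\chi_\p \equiv 1 \pmod I$ for every $\p\in S$, we may write $\chi_\p = 1 + \dd\chi_\p \pmod{I^2}$, and because $I^{r+1} = 0$ the product $\bigotimes_{\p\in S}\chi_\p$, when pushed into $C_\plectic(T_S, R)$, only retains the top-degree term $\bigotimes_{\p\in S}\dd\chi_\p$ — all lower-order terms involve a factor $\chi_\p$ replaced by the constant $1$ in at least one variable, hence lie in $\sum_{\Sigma\subsetneq S} C(T_\Sigma, \cdot)$ and are killed by the quotient defining $C_\plectic$. This is precisely the role of the map $\mu\colon \bigotimes_{\p\in S}C(T_\p,\overline I)/\overline I \to C_\plectic(T_S, I^r)$: under $\mu$, the cup product $\dd\chi_{\p_1}\cup\dots\cup\dd\chi_{\p_r}$ corresponds to the relevant component of $\chi$. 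Combining this with the fact that the ``non-$S$'' part of $\chi$ reduces to $\overline\chi$ modulo $I$, one identifies $\widetilde\Theta_\chi$ with $\mu_\ast(\dd\chi_{\p_1}\cup\dots\cup\dd\chi_{\p_r}) \cap \vartheta_{\overline\chi}$, where $\vartheta_{\overline\chi} = \overline\chi \cap \vartheta$ is the twisted fundamental class from Definition \ref{twisted}. The compatibility of cap products with the splitting $C(T(\A^\infty)/U(\mathfrak{c}),\cdot) \cong C(T(\A^{S,\infty})/U(\mathfrak{c})^S,\ \otimes_{\p\in S}C(T_\p,\cdot))$ — using that $\mathfrak{c}$ is coprime to every $\p\in S$ and that the $\p\in S$ are inert in $E$, as recorded just before the Plectic invariants subsection — is what makes this manipulation legitimate at the level of (co)homology and not just coefficients.

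The part that requires genuine care, rather than being purely formal, is tracking the cup-product and cap-product structures through the bookkeeping: the cohomology classes $\dd\chi_\p \in \mrm{H}^0(T_\p, C(T_\p,\overline I)/\overline I)$ arise via Lemma \ref{homogeneous} applied to the principal homogeneous $T_\p$-space $T_\p$ itself (so that $\partial(\dd\chi_\p) = \dd\chi_\p$ literally, by the explicit formula for $\partial$), and one must verify that the external cup product of these, followed by $\mu_\ast$, is compatible with how the tensor-factored character $\chi$ sits inside $C(T(\A^{S,\infty})/U(\mathfrak{c})^S, \otimes_\p C(T_\p, R))$ before one caps against $\vartheta \in \mrm{H}_q(X_T(\OO_F),\Z)$. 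This is a diagram-chase through Shapiro's lemma and the definition of the cap product via multiplication of cochains, and while no single step is deep, getting the indices and the order of operations right is where the work is. The final clause — that $\widetilde\Theta_\chi = 0$ when $I^r = 0$ — is then immediate, since $\mu$ takes values in $C_\plectic(T_S, I^r)$, which vanishes under that hypothesis.
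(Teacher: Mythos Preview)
Your proposal is correct and is precisely the unwinding of definitions that the paper has in mind; the paper's own proof consists of the single sentence ``Unravelling the definitions one easily deduces the following lemma,'' and you have supplied exactly that unravelling. The one point worth making explicit in your write-up is why the away-from-$S$ factor $\chi^{S}$ can be replaced by $\overline{\chi}$: since the surviving $S$-part lands in $I^{r}$ and $I^{r+1}=0$, multiplying by anything congruent to $\chi^{S}$ modulo $I$ gives the same result.
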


\begin{remark}
For locally constant characters, Lemma \ref{characters} is a special case of (\cite{BG2}, Proposition 3.6).
One can easily generalize the full statement of (\cite{BG2}, Proposition 3.6) to the profinite setting using arguments as in (\cite{DS}, Section 3).
\end{remark}

\subsection{Construction of the p-adic L-function}
We keep the same notations as in the previous section and further define
\[
\scr{M}_\plectic(T_S):=\mrm{Hom}_{\bb{Z}_p}\big(\otimes_{\p\in S}C(T_\p,\bb{Z}_p)/\bb{Z}_p,\ \bb{Z}_p\big).
\]
With similar arguments as in the proof of Lemma \ref{restrictioniso}, one proves that
there exists an $R$-linear and $T_S$-equivariant integration pairing
	\begin{equation}\label{intpair}
	\int_{T_S}\colon\scr{M}_\plectic(T_S)\times C_\plectic(T_S,R) \too R.
	\end{equation}
As $\PP(F_S)$ is principal homogeneous $T_S$-space, the choice of a base-point $x\in\bb{P}^1(F_S)$ induces 
\[
\iota_x\colon \bb{P}^1(F_S)\xrightarrow{\sim} T_S,
\]
and any other such identification is obtained translating by an element of $T_S$. We obtain an isomorphism $\iota_x^\ast\colon\otimes_{\p\in S}C(T_\p,\Z_p)/\Z_p\xlongrightarrow{\sim}\St_S^{\cont}(\Z_p)$,
which together with restriction along $\psi$, produces
\[
\iota_x^{\ast}\colon \mrm{H}^{q}\big(X_G^{S};\St_{S}(\bb{Z}_p),\Z_p\big)
\too \mrm{H}^{q}\big(X_T(\mathfrak{c}),\scr{M}_\plectic(T_S)\big).
\]

\begin{definition}
	Consider  the completed group algebra $\Z_p\llbracket\cal{G}_{\mathfrak{c},\mbox{\tiny $S$}}\rrbracket$ and the universal character  $\chi_{\mathfrak{c}}\colon \cal{G}_{\mathfrak{c},\mbox{\tiny $S$}} \to \Z_p\llbracket\cal{G}_{\mathfrak{c},\mbox{\tiny $S$}}\rrbracket^\times$.
The anticyclotomic $p$-adic $L$-function attached to the tuple ($A_{/E},\epsilon,\mathfrak{c},S$) is  
\[
\scr{L}_S^{\epsilon}(A/E)_\mathfrak{c}:=\iota_{x}^{\ast}(c_{A,\epsilon}^S)\cap\widetilde{\Theta}_{\chi_{\mathfrak{c}}} \in \Z_p\llbracket\mathcal{G}_{\mathfrak{c},\mbox{\tiny $S$}}\rrbracket
\]
where the cap-product is induced by the integration pairing \eqref{intpair}.
\end{definition}
\begin{remark}\label{uniquenessL-function}
	The $p$-adic $L$-function $\scr{L}_S^{\epsilon}(A/E)_\mathfrak{c}$ is independent on the choice of base-point up to multiplication by a group-like element of $\Z_p\llbracket\cal{G}_{\mathfrak{c},\mbox{\tiny $S$}}\rrbracket$ in the image of $T_S$. Indeed, the identification $\iota_x:\bb{P}^1(F_S)\xrightarrow{\sim} T_S$ is unique up to translation by an element of $T_S$, and the homology class $\Theta_{\chi_{\mathfrak{c}}} $ actually takes values in $\mrm{Hom}_\mrm{gr}(T_S,R^\times)\subseteq C(T_S,R)$ for  $R=\Z_p\llbracket\cal{G}_{\mathfrak{c},\mbox{\tiny $S$}}\rrbracket$. 
\end{remark}

\subsection{Special value formulas}
We recall a special value formula for Rankin-Selberg $L$-functions due to Waldspurger. We consider an idele class character $\chi\colon\bb{A}_E^\times/E^\times\rightarrow\bb{C}^\times$
such that $\chi_{\lvert\bb{A}_F^\times}\equiv 1$, and let $\Pi_A=\otimes_v\Pi_v$ denote the cuspidal automorphic representation of $\mrm{PGL}_2(\bb{A}_F)$ associated to $A_{/F}$ by modularity. We write $L(s,\Pi_A,\chi)$ for the Rankin-Selberg $L$-function associated to $\Pi_A$ and $\chi$, normalized to have the center at $s=1/2$. Denoting by $\eta:\bb{A}_F^\times/F^\times\to\bb{C}^\times$ the quadratic character associated to $E/F$ by class field theory, the set  
\[
\Sigma(A,\chi)=\Big\{v\Big\lvert\ \epsilon\Big(\frac{1}{2},\Pi_v,\chi_v\Big)\not=\chi_v\cdot\eta_v(-1)\Big\}
\]
of places of $F$ is finite, and it is related to the global root number by the formula
\[
\epsilon\Big(\frac{1}{2},\Pi_A,\chi\Big)=(-1)^{\lvert\Sigma(A,\chi)\rvert}.
\]
Recall that the fixed quaternion algebra $B/F$ was chosen to have  ramification set equal to
\[
\Sigma(B):=\big\{\mathfrak{q}\mid\mathfrak{n}^{\mbox{\tiny $-$}}\big\}\cup\big\{\infty_{n+1},\dots,\infty_t\big\}.
\] 
and such that $\Pi_A$ admits a Jacquet-Langlands transfer $\pi$ to it. Further, $B$ admits an embedding $\psi\colon E\hookrightarrow B$ determining a non-split torus $T=E^\times/F^\times$ in the algebraic group $G=B^\times/F^\times$. Then, we can consider the period integral $\ell(\cdot,\chi)\colon \pi\rightarrow\bb{C}$ given by
\[
\ell(\xi,\chi):=\int_{[T(\bb{A}_F)]}\xi(t)\cdot\chi(t)\ \mrm{d}t
\]
where $\mrm{d}t$ denotes the Haar measure on $[T(\bb{A}_F)]:=T(\bb{A}_F)/T(F)$ of total volume $1$.

\begin{remark}\label{stupidvanishing}
	If $\Sigma(A,\chi)\not=\Sigma(B)$, then $\ell(\cdot,\chi)\colon \pi\rightarrow\bb{C}$ is the zero functional. Indeed, $\ell(\cdot,\chi)$ factors as a product of local linear functionals and there exists a place $v$ for which the space of linear functionals $\mrm{Hom}_{E_v^\times}\big(\pi_v\otimes\chi_v,\bb{C}\big)$ is zero by work of Tunnell \cite{Tunnel} and Saito \cite{Saito}. 
\end{remark}

\noindent Let $\langle\cdot,\cdot\rangle_\mrm{Pet}$ denote the Petersson inner product for automorphic forms on the quotient space $(B\otimes\bb{A}_F)^\times/\bb{A}_F^\times B^\times$ with respect to the chosen Haar measure $\mrm{d}t$, and choose non-trivial hermitian forms $\langle\cdot,\cdot\rangle_v$ on $\pi_v$ satisfying the product formula $\langle\cdot,\cdot\rangle_\mrm{Pet}=\prod_v\langle\cdot,\cdot\rangle_v$. Then, for any place $v$ of $F$ we can define the local linear functional $\alpha(\cdot,\chi_v):\pi_v\rightarrow\bb{C}$ by 
\[
\alpha(\xi_v,\chi_v)=\frac{L\big(1,\eta_v\big)\cdot L\Big(1,\Pi_v,\mrm{ad}\Big)}{\zeta_{F_v}(2)\cdot L\Big(\frac{1}{2},\Pi_v,\chi_v\Big)}\cdot\int_{T(F_v)}\big\langle \pi_v(t)\xi_v,\xi_v\big\rangle_v\cdot\chi_v(t)\ \mrm{d}t.
\]
Suppose $\Sigma(A,\chi)=\Sigma(B)$, then for a non-zero decomposable vector $\xi\in \pi$ we have $\alpha(\xi_v,\chi_v)=1$ for all but finitely many places $v$ of $F$, and Waldspurger formula holds
	\[
	\big\lvert \ell(\xi,\chi)\big\rvert^2=\frac{\zeta_F(2)\cdot L\Big(\frac{1}{2},\Pi_A,\chi\Big)}{8\cdot L\big(1,\eta\big)^2 \cdot L\Big(1,\Pi_A,\mrm{ad}\Big)}\cdot\prod_v\alpha(\xi_v,\chi_v).
	\]
In the form we presented here, this formula can be found in (\cite{YZZ}, Theorem 1.4). It justifies our choice of anticyclotomic $p$-adic $L$-function.

\begin{theorem}\label{corolp-adic}
Let $\chi\colon \cal{G}_{\mathfrak{c},\mbox{\tiny $S$}} \to\overline{\bb{Q}}^\times$ be a finite order character. On the one hand, if $\chi$ does not ramify at every prime in $S$ or $\chi_\infty\neq\epsilon$, then 
\[
\chi\big(\scr{L}_{S}^\epsilon(A/E)_\mathfrak{c}\big)=0.
\]
On the other hand, if $\chi$ has conductor $\frak{c}\cdot\prod_{i=1}^r\frak{p}_i^{n_i}$ with $n_i\ge1$ for every $i$, and $\chi_\infty=\epsilon$, then 
	\[
	\chi\Big(\scr{L}_{S}^\epsilon(A/E)_\mathfrak{c}\Big)\neq 0\qquad\iff\qquad L\Big(\frac{1}{2},\Pi_A,\chi\Big)\neq 0.
	\]
\end{theorem}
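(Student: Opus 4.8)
The plan is to exploit the construction of $\scr{L}_S^\epsilon(A/E)_\mathfrak{c}$ as an integral of the cohomology class $c_{A,\epsilon}^S$ against the twisted fundamental class $\widetilde{\Theta}_{\chi_\mathfrak{c}}$, and to unwind what evaluation at a finite order character $\chi$ produces. First I would record the interpolation formula: for a finite order $\chi\colon\cal{G}_{\mathfrak{c},\mbox{\tiny $S$}}\to\overline{\bb{Q}}^\times$, the specialization $\chi\big(\scr{L}_S^\epsilon(A/E)_\mathfrak{c}\big)$ is computed — after choosing a base point $x\in\bb{P}^1(F_S)$ and using the identification $\iota_x\colon\bb{P}^1(F_S)\xrightarrow{\sim}T_S$ — as the cap product of $\iota_x^{\ast}(c_{A,\epsilon}^S)$ with $\vartheta_{\chi}$, i.e. as a classical toric period of the cohomology class twisted by $\chi$. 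The $\chi_\infty=\epsilon$ condition enters exactly as in the remark after the definition of plectic invariants: if $\chi_\infty\neq\epsilon$, then by orthogonality of characters on $\pi_0(T_\infty)\cong\pi_0(G_\infty)$ and Remark \ref{characterinf}, the class $\vartheta_\chi$ lies in a component on which $c_{A,\epsilon}^S$ pairs to zero, giving the first vanishing statement in that case.

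Next I would handle the ramification condition. For $\p\in S$, the class $c_{A,\epsilon}^S$ is built from the $\pm$-Steinberg representation at $\p$, and the relevant local integrand is the $T_\p$-invariant function $f_{\psi,\p}\in C(\bb{P}^1(F_\p),\widehat E_\p^\times)/\widehat E_\p^\times$ — more precisely, after passing to $\St_S^{\cont}(\Z_p)$ via $\iota_x^\ast$. When $\chi_\p$ is unramified, the local component of the integral against $\vartheta_\chi$ degenerates: the relevant measure pairs a locally constant function with a constant character on $T_\p$, and since the Steinberg representation is functions \emph{modulo constants}, the contribution vanishes. This is the local incarnation of the statement that $\scr{L}_S^\epsilon(A/E)_\mathfrak{c}$ lies in $I^r$ for the augmentation ideal $I$; so if any $\chi_\p$ is unramified, $\chi\big(\scr{L}_S^\epsilon(A/E)_\mathfrak{c}\big)=0$. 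This gives the first assertion completely.

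For the converse (the nonvanishing criterion), the plan is to compare the toric period defining $\chi\big(\scr{L}_S^\epsilon(A/E)_\mathfrak{c}\big)$ with the automorphic period integral $\ell(\cdot,\chi)$ appearing in Waldspurger's formula. The key point is that when $\chi$ is ramified at every prime of $S$ and $\chi_\infty=\epsilon$, one checks $\Sigma(A,\chi)=\Sigma(B)$: the archimedean places contribute correctly because $\chi_\infty=\epsilon$ matches the ramification of $B$ at infinity, the primes in $S$ contribute because $\chi$ is ramified there and $\Pi_{A,\p}$ is (an unramified twist of) Steinberg, and the remaining finite places match by the definition of $\mathfrak{n}^{\mbox{\tiny $+$}}$, $\mathfrak{n}^{\mbox{\tiny $-$}}$. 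One then argues that the cohomological cap product $\iota_x^\ast(c_{A,\epsilon}^S)\cap\vartheta_\chi$ equals, up to an explicit nonzero constant coming from the normalization of $c_{A,\epsilon}^S$ as a generator of a rank-one space (Proposition \ref{class}) and the choice of test vectors, the period $\ell(\xi,\chi)$ for a suitable new-vector-type $\xi$. Since the local functionals $\alpha(\xi_v,\chi_v)$ at the ramified places in $S$ and at $v\mid\disc(B)$ are nonzero for the test vectors of File--Martin--Pitale (this is precisely the content cited from \cite{FileMartinPitale}), Waldspurger's formula gives $\lvert\ell(\xi,\chi)\rvert^2$ a nonzero multiple of $L\big(\tfrac12,\Pi_A,\chi\big)$, whence the stated equivalence.

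The main obstacle is the last comparison: translating the cohomological toric integral attached to $c_{A,\epsilon}^S$ into Waldspurger's automorphic period and verifying that no spurious factor vanishes. This requires (i) matching the cohomological normalization of $c_{A,\epsilon}^S$ with a choice of decomposable vector $\xi\in\pi$, (ii) identifying the cap product with $\vartheta_\chi$ with the adelic integral $\int_{[T(\bb{A}_F)]}$ up to an elementary rational factor, and (iii) checking nonvanishing of the relevant local terms — which, by Remark \ref{stupidvanishing}, is equivalent to $\Sigma(A,\chi)=\Sigma(B)$ together with the explicit test-vector computations. This is exactly the content developed in \cite{Felix} and \cite{BG2}; I would cite those references for the detailed identification and focus the written proof on the reductions above.
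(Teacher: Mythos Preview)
Your proposal is correct and follows the paper's overall plan: identify $\chi(\scr{L}_S^\epsilon(A/E)_\mathfrak{c})$ with a toric period, then invoke Waldspurger and the test-vector computations of \cite{FileMartinPitale}. There is, however, one genuine difference in how you handle the vanishing when $\chi$ is unramified at some $\p\in S$. You argue directly from the construction: since $\p$ is inert, $T_\p$ is compact, so an unramified $\chi_\p$ is the trivial character; then $\otimes_\q\chi_\q$ is constant in the $\p$-variable and dies in the plectic quotient $C_\plectic(T_S,R)$, forcing $\widetilde{\Theta}_\chi=0$ and hence $\chi(\scr{L}_S^\epsilon(A/E)_\mathfrak{c})=0$. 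The paper instead first establishes the comparison with the automorphic period $\ell(\xi,\chi)$, computes
\[
\Sigma(A,\chi)=(S\setminus\Sigma)\cup\{\q\mid\mathfrak{n}^{\mbox{\tiny $-$}}\}\cup\{\infty_{n+1},\dots,\infty_t\}
\]
for $\Sigma\subseteq S$ the set of primes where $\chi$ ramifies, and then appeals to Tunnell--Saito (Remark \ref{stupidvanishing}) to conclude $\ell(\cdot,\chi)=0$ whenever $\Sigma\subsetneq S$. Your route is more elementary for this direction and does not require the period comparison; the paper's route has the virtue of treating both the vanishing and the nonvanishing through a single automorphic identity. For the nonvanishing equivalence, and for the actual identification of the cohomological cap product with $\ell(\xi,\chi)$ up to a nonzero constant, both arguments defer to \cite{Felix} and \cite{BG2}.
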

\begin{proof}
		This is a special case of (\cite{BG2}, Theorem 5.8). For the convenience of the reader, we sketch the idea of the proof. First, if $\chi_\infty\not=\epsilon$, then the $p$-adic $L$-function clearly vanishes. Thus, it is more interesting to consider a finite order character $\chi$ with $\chi_\infty=\epsilon$. In this case it is possible to compare $\chi\big(\scr{L}_{S}^\epsilon(A/E)_\mathfrak{c}\big)$  with a value of the period integral $\ell(\cdot,\chi)$. When $\chi$ is ramified at primes dividing $\mathfrak{c}$ -- which is coprime to $\mathfrak{f}_A$ -- and at primes in a subset $\Sigma\subseteq S$, one can compute that
	\[
	\Sigma(A,\chi)=\big(S\setminus\Sigma\big)\cup\big\{\mathfrak{q}\mid\mathfrak{n}^{\mbox{\tiny $-$}}\big\}\cup\big\{\infty_{n+1},\dots,\infty_t\big\}.
	\]
	Therefore, if  $\Sigma\not=S$, then $\Sigma(A,\chi)\not=\Sigma(B)$ and the period integral $\ell(\cdot,\chi)$ vanishes for local reasons (Remark $\ref{stupidvanishing}$). While when $\Sigma=S$, one can conclude by invoking Waldspurger's formula and the explicit computations of \cite{FileMartinPitale}.
\end{proof}

\subsection{The p-adic Gross--Zagier formula}\label{GZ}
The vanishing of the anticyclotomic $p$-adic $L$-function to order at least $r=\lvert S\rvert$ at any character of $\mathcal{G}_{\mathfrak{c}}$ is proved in (\cite{Felix} $\&$ \cite{BG2}, Theorem 5.5) and it is a direct consequence of Lemma \ref{characters}. Concretely, it means that
\[
\scr{L}_S^{\epsilon}(A/E)_\frak{c}\in I^{r}
\]
where $I=\ker\big(\Z_p\llbracket\mathcal{G}_{\mathfrak{c},\mbox{\tiny $S$}}^{\mbox{\tiny $+$}}\rrbracket\to \Z_p[\mathcal{G}_{\mathfrak{c}}^{\mbox{\tiny $+$}}]\big)$ is the relative augmentation ideal.
Inspired by the work of Bertolini and Darmon \cite{CDuniformization}, we prove that the values of the $r$-th derivative of $\scr{L}_S^{\epsilon}(A/E)_\mathfrak{c}$ compute plectic $p$-adic invariants.
\begin{definition}
	For any character $\chi\colon\G_{\mathfrak{c}}\to\overline{\bb{Q}}^\times$ with $\chi_\infty=\epsilon$ we set
	\[
	\chi\Big(\partial^{r}\scr{L}_{S}^\epsilon(A/E)_\mathfrak{c}\Big):= \scr{L}_{S}^\epsilon(A/E)_\mathfrak{c}\otimes 1\quad\text{in}\quad I^r/I^{r+1}\otimes_\chi\overline{\bb{Q}},
	\]
	where we consider $I^r/I^{r+1}$ as a $\Z[\G_{\mathfrak{c}}]$-module. 
\end{definition}

\begin{remark}\label{well-definedderivative}
	As every $\p\in S$ is inert in $E/F$, the image of the Artin map from $T_S$ to $\mathcal{G}_{\mathfrak{c}}$ is trivial. It follows that the element $\chi\big(\partial^{r}\scr{L}_{S}^\epsilon(A/E)_\mathfrak{c}\big)$ is independent of the choice of base-point $x\in\bb{P}^1(F_S)$ made in the definition of the $p$-adic $L$-function (Remark \ref{uniquenessL-function}).
\end{remark}
\noindent Since the image of the local Artin map from $T_\p$ to $\mathcal{G}_{\mathfrak{c}}$ is trivial, the function
\[
\dd\rec_\p\colon \widehat{E}_\p^{\times} \to I/I^2,\qquad t\mapsto \rec_\p(t)-1 \pmod{I^2}
\]
is a group homomorphism.
Taking tensor products over all $\p\in S$ yields the homomorphism
\[
\dd\rec_S \colon \widehat{E}_{S,\otimes}^{\times} \too I^{r}/I^{r+1}
\]
which extends to a $\Z_p[\G_{\mathfrak{c}}]$-linear homomorphism
\[
\dd\rec_S \colon \widehat{E}_{S,\otimes}^{\times}\otimes_{\Z_p}\Z_p[\G_{\mathfrak{c}}]\too I^{r}/I^{r+1}.
\]
Then, our $p$-adic Gross--Zagier formula takes the following form.
\begin{theorem}\label{GZtheorem}
For any character $\chi\colon\G_{\mathfrak{c}}\to\overline{\bb{Q}}^\times$ with $\chi_\infty=\epsilon$ the equality
\[
2^{r}\cdot \chi\Big(\partial^{r}\scr{L}_{S}^\epsilon(A/E)_\mathfrak{c}\Big) = \dd\rec_S(\mrm{Q}_{A}^{\chi})
\]
holds in $I^{r}/I^{r+1}\otimes_{\chi}\overline{\bb{Q}}$.
\end{theorem}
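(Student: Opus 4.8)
The plan is to establish this $p$-adic Gross--Zagier formula by first unfolding the $r$-th derivative of $\scr{L}_S^\epsilon(A/E)_\mathfrak{c}$ into an \emph{additive} integral of the universal character against the measure-valued class $c_{A,\epsilon}^S$, and then matching this additive integral with the \emph{multiplicative} integral defining $\mrm{Q}_{A}^{\chi}$. The constant $2^{r}$ will come out of a purely local comparison at each prime $\p\in S$.

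First I would unfold the derivative. Since for every $\p\in S$ the image of the local Artin map from $T_\p$ to $\mathcal{G}_{\mathfrak{c}}$ is trivial (Remark~\ref{well-definedderivative}), the universal character satisfies $\chi_{\mathfrak{c},\p}\equiv 1\pmod{I}$, so I may apply Lemma~\ref{characters} over $R:=\Z_p\llbracket\mathcal{G}_{\mathfrak{c},\mbox{\tiny $S$}}\rrbracket/I^{r+1}$ (up to the narrow-class $+$-decorations) to the class $\widetilde{\Theta}_{\chi_{\mathfrak{c}}}$ appearing in $\scr{L}_S^\epsilon(A/E)_\mathfrak{c}=\iota_{x}^{\ast}(c_{A,\epsilon}^S)\cap\widetilde{\Theta}_{\chi_{\mathfrak{c}}}$. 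This gives
\[
\widetilde{\Theta}_{\chi_{\mathfrak{c}}}\ \equiv\ \mu_{\ast}\big(\mrm{d}\chi_{\mathfrak{c},\p_1}\cup\ldots\cup\mrm{d}\chi_{\mathfrak{c},\p_r}\big)\cap\vartheta_{\overline{\chi_{\mathfrak{c}}}}\pmod{I^{r+1}}.
\]
Feeding this into the integration pairing \eqref{intpair} and using its compatibility with $\mu$, with restriction along $\psi$, and with cap products, I would rewrite $\scr{L}_S^\epsilon(A/E)_\mathfrak{c}\bmod I^{r+1}$ as the cap product of $\vartheta_{\overline{\chi_{\mathfrak{c}}}}$ with the class in $\mrm{H}^{q}(X_{T}(\mathfrak{c}),I^{r}/I^{r+1})$ obtained by integrating $\bigotimes_{\p\in S}\mrm{d}\chi_{\mathfrak{c},\p}$ against $\iota_x^{\ast}(c_{A,\epsilon}^S)$. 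Specializing at $\chi$ (using $\chi(\vartheta_{\overline{\chi_{\mathfrak{c}}}})=\vartheta_{\chi}$) then gives a formula for $\chi\big(\partial^{r}\scr{L}_S^\epsilon(A/E)_\mathfrak{c}\big)$ as such a cap product.

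Next I would compare integrands. By functoriality of the cap product, $\dd\rec_S(\mrm{Q}_{A}^{\chi})=\big(\dd\rec_{S,\ast}(\Psi_S^{\diamond})^{\ast}(c_{A,\epsilon}^S)\big)\cap\vartheta_{\chi}$, and since $\dd\rec_S=\bigotimes_{\p}\dd\rec_\p$ is additive while a multiplicative integral becomes an additive one after composing with a homomorphism, $\dd\rec_{S,\ast}(\Psi_S^{\diamond})^{\ast}(c_{A,\epsilon}^S)$ is obtained by integrating $\bigotimes_{\p\in S}\big(\dd\rec_\p\circ f_{\psi,\p}\big)$ against $c_{A,\epsilon}^S$, with $f_{\psi,\p}=\Phi_\p([\tau_{\psi,\p}]-[\bar{\tau}_{\psi,\p}])\in C(\PP(F_\p),E_\p^{\times})/E_\p^{\times}$. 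Both $\dd\rec_\p\circ f_{\psi,\p}$ and the $T_\p$-invariant function on $\PP(F_\p)$ corresponding, via $\iota_x$ and Lemma~\ref{homogeneous}, to $\mrm{d}\chi_{\mathfrak{c},\p}$, are $T_\p$-invariant elements of $C(\PP(F_\p),I/I^{2})/(I/I^{2})$, so by injectivity of $\partial$ it is enough to compare them under $\partial$. Lemma~\ref{keyidentity} gives $\partial(f_{\psi,\p})(t)=t^{1-\sigma_\p}$; since $t^{1-\sigma_\p}$ has image $t^{2}$ in $T_\p$ (Remark~\ref{imprem}) and $\rec_\p(t)\equiv 1\pmod{I}$, this yields $\partial(\dd\rec_\p\circ f_{\psi,\p})(t)=\rec_\p(t)^{2}-1\equiv 2\big(\rec_\p(t)-1\big)\pmod{I^{2}}$, i.e.~twice the value of $\partial$ on the function attached to $\mrm{d}\chi_{\mathfrak{c},\p}$. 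Hence $\dd\rec_\p\circ f_{\psi,\p}$ equals $2$ times that function, and after tensoring over $S$ the Stark--Heegner integrand is $2^{r}$ times the $L$-function integrand. Combining with the first step and capping with $\vartheta_{\chi}$ gives $\dd\rec_S(\mrm{Q}_{A}^{\chi})=2^{r}\cdot\chi\big(\partial^{r}\scr{L}_S^\epsilon(A/E)_\mathfrak{c}\big)$; the case $\chi_{\infty}\neq\epsilon$ is vacuous since both sides vanish (Remark~\ref{characterinf}).

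I expect the main obstacle to be the bookkeeping of the first step: translating the identity of Lemma~\ref{characters} into the integration pairing \eqref{intpair}, checking the associativity and compatibility of the various cup products, cap products, the multiplication $\mu_{\ast}$, and the restriction along $\psi$, and tracking the narrow-class decorations (the $+$-superscripts, and the passage between $\mathcal{G}_{\mathfrak{c},\mbox{\tiny $S$}}$, $\mathcal{G}_{\mathfrak{c}}$ and their completed group rings). By contrast, the local input producing the constant $2^{r}$ — via Lemmas~\ref{homogeneous} and~\ref{keyidentity} together with Remark~\ref{imprem} — is short and transparent.
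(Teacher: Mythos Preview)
Your proposal is correct and follows essentially the same approach as the paper: reduce the universal character modulo $I^{r+1}$, apply Lemma~\ref{characters} to unfold $\widetilde{\Theta}_{\chi_{\mathfrak{c}}}$ as $\mu_{\ast}(\mrm{d}\chi_{\p_1}\cup\ldots\cup\mrm{d}\chi_{\p_r})\cap\vartheta_{\overline{\chi}}$, and then use Lemma~\ref{keyidentity} together with Remark~\ref{imprem} (and the injectivity from Lemma~\ref{homogeneous}) to compare the integrand with $(\Psi_{S}^{\diamond})^{\ast}$, picking up the factor of $2$ at each $\p\in S$. The paper's proof is terser but the logical skeleton and the key inputs are the same.
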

\begin{proof}
Let $\partial^{r}\scr{L}_{S}^{\epsilon}(A/E)_\frak{c}$ denote the image of $\scr{L}_{S}^{\epsilon}(A/E)_\frak{c}$ in $I^r/I^{r+1}$, and consider the reduction modulo $I^{r+1}$ of the universal character
\[
\chi_r\colon \mathcal{G}_{\mathfrak{c},\mbox{\tiny $S$}}^{\mbox{\tiny $+$}} \to \big(\Z_p\llbracket\mathcal{G}_{\mathfrak{c},\mbox{\tiny $S$}}\rrbracket/I^{r+1}\big)^\times.
\]
Directly from the definitions, we can write $\partial^{r}\scr{L}_{S}^{\epsilon}(A/E)_\frak{c}
=\iota_{x}^{\ast}(c_{A,\epsilon}^S)\cap\widetilde{\Theta}_{\chi_r}$. Moreover, Lemma \ref{characters} applies to our setting with $R= \Z_p\llbracket\mathcal{G}_{\mathfrak{c},\mbox{\tiny $S$}}\rrbracket/I^{r+1}$ giving
\[
\widetilde{\Theta}_{\chi_r}
= \mu_*\big(\mrm{d}\chi_{r,\p_1}\cup\dots\cup \mrm{d}\chi_{r,\p_r}\big)\ \cap\ \Theta_{\overline{\chi}_r}.
\]
By Lemma \ref{keyidentity} and Remark \ref{imprem}, the following equality holds in $\mrm{H}^q(X_T(\mathfrak{c}),I^{r}/I^{r+1})$
\[
\iota_{x}^{\ast}(c^S_{A,\epsilon})\cup \mu_*\big( 2\mrm{d}\chi_{r,\p_1}\cup\dots\cup 2\mrm{d}\chi_{r,\p_r}\big)
=(\mrm{d}\rec_S)_\ast\big((\Psi^{\diamond}_{S})^{\ast}( c^S_{A,\epsilon})\big).
\]
Then, the claim follows because any character $\chi:\G_{\mathfrak{c}}\to\overline{\bb{Q}}^\times$ factors through $\overline{\chi}_r$.
\end{proof}


\bibliography{Plectic}
\bibliographystyle{alpha}

\end{document}